\newtheorem{thm}{Theorem}[section]
\newtheorem{lem}[thm]{Lemma}
\newtheorem{cor}[thm]{Corollary}
\newtheorem{prop}[thm]{Proposition}
\newtheorem{rem}[thm]{Remark}
\newtheorem{rems}[thm]{Remarks}
\newtheorem{dfn}[thm]{Definition}
\newcommand{\aver}[1]{-\hskip-0.46cm\int_{#1}}
\newcommand{\avert}[1]{-\hskip-0.38cm\int_{#1}}
\newcommand{\ind}{1\hspace{-2.5 mm}{1}}
\DeclareMathOperator{\supp}{supp}
\newcommand{\diam}{{\rm diam}}
\newcommand{\dist}{{\rm dist}}
\newcommand{\grad}{{\mathbf \nabla}}
\newcommand{\Lap}{{\Delta}}
\newcommand{\Div}{{\rm div\; }}
\newcommand{\ra}{{\; \rightarrow\; }}
\newcommand{\R}{{\mathbb R}}
\newcommand{\Rn}{{\R^n}}
\newcommand{\C}{{\mathbb C}}
\newcommand{\Cinftyz}{{C^\infty_0}}
\newcommand{\Lip}{{\rm Lip}}
\newcommand{\Lipz}{{\Lip_0}}
\newcommand{\Tone}{{\mathcal{T}_1}}
\newcommand{\bPhi}{{\bf \Phi}}
\newcommand{\Wone}{{W_1^1}}
\newcommand{\dotWone}{{\dot{W}_1^1}}
\newcommand{\Mone}{{M_1^1}}
\newcommand{\dotMonep}{{\dot{M}_p^1}}
\newcommand{\dotMone}{{\dot{M}_1^1}}
\newcommand{\tilMone}{{\widetilde{M}_{1}^{1}}}
\newcommand{\Loneloc}{{L_{1,{\rm loc}}}}
\newcommand{\Lone}{{L_1}}
\newcommand{\Lp}{{L_p}}
\newcommand{\Hone}{{H_1}}
\newcommand{\Honeato}{{H_{1,{\rm ato}}}}
\newcommand{\Honemax}{{H_{1,{\rm max}}}}
\newcommand{\Honemol}{{H_{{\rm mol},1}}}
\newcommand{\HSonemax}{{\dot{HS}^1_{\rm max}}}
\newcommand{\dotHSato}{{\dot{HS}_{\rm ato}^{1}}}
\newcommand{\dotHStato}{{\dot{HS}_{t,{\rm ato}}^{1}}}
\newcommand{\HStato}{{HS_{t,\rm ato}^{1}}}
\newcommand{\LStato}{{LS^{1}_{t,{\rm ato}}}}
\newcommand{\Bibar}{{\overline{B_i}}}
\newcommand{\Bij}{{B_i^j}}
\newcommand{\Bijbar}{{\overline{\Bij}}}
\newcommand{\Bjk}{{B_k^j}}
\newcommand{\Bjkbar}{{\overline{\Bjk}}}
\newcommand{\Bjkprime}{{(\Bjk)'}}
\newcommand{\cij}{{c_i^j}}
\newcommand{\cjk}{{c_k^j}}
\newcommand{\elljk}{{\ell_k^j}}
\newcommand{\cM}{{\mathcal{M}}}
\newcommand{\Mq}{{\cM_q}}
\newcommand{\fplus}{{f^{+}}}
\newcommand{\fstar}{{f^{\star}}}
\newcommand{\qstar}{{q^*}}
\renewcommand\@biblabel[1]{#1.}
\begin{document}
\allowdisplaybreaks

\title{An atomic decomposition of the Haj{\l}asz Sobolev space $\Mone$ on manifolds
 \footnote{Project funded in part by the Natural Sciences
and Engineering Research Council, Canada, the Centre de recherches
math\'ematiques and the Institut des sciences math\'ematiques,
Montreal.} }

\author{N. Badr\\
{\small Universite de Lyon; CNRS;} \\
{\small Universit\'e  Lyon 1, Institut Camille Jordan}\\
{\small 43 boulevard du 11 novembre 1918}\\
{\small F-69622 Villeurbanne cedex, France.}
\\{\small badr@math.univ-lyon1.fr} \and
G. Dafni\footnote{corresponding author}\\
{\small Department of Mathematics and Statistics}\\{\small Concordia
University}\\{\small 1455 de Maisonneuve Blvd. West}\\{\small
Montr\'eal, QC, Canada H3G1M8}\\
{\small gdafni@mathstat.concordia.ca}
\\{\small ph (514) 848-2424 x.3216, fax (514) 848-2831}
\\}
\date{}

\maketitle

\noindent

\begin{abstract} Several possible notions of Hardy-Sobolev
spaces on a Riemannian manifold with a doubling measure are
considered. Under the assumption of a Poincar\'e inequality, the
space $\Mone$, defined by Haj{\l}asz, is identified with a
Hardy-Sobolev space defined in terms of atoms. Decomposition results
are proved for both the homogeneous and the nonhomogeneous spaces.
\end{abstract}

\noindent {\small {\bf Key words:} Hardy-Sobolev spaces,
atomic decomposition, metric measure spaces, Haj{\l}asz-Sobolev spaces}\\
{\small {\bf MS Classification} (2010): }{\small Primary: 42B30;
Secondary: 46E35, 58D15}\\

\section{Introduction}  The aim of this paper is to compare different
definitions of Hardy-Sobolev spaces on manifolds.  In particular, we
consider characterizations of these spaces in terms of maximal
functions, atomic decompositions, and gradients, some of which have
been shown in the Euclidean setting, and apply them to the $\Lone$
Sobolev space defined by Haj{\l}asz.

In the Euclidean setting, specifically on a domain $\Omega \subset
\Rn$, Miyachi \cite{miyachi} shows that for a locally integrable
function $f$ to have partial derivatives $\partial^\alpha f$ (taken
in the sense of distributions) belonging to the real Hardy space
$H_p(\Omega)$, is equivalent to a certain maximal function of $f$
being in $\Lp(\Omega)$.  Earlier work by Gatto, Jim\'enez and Segovia 
 \cite{gatto} on Hardy-Sobolev spaces, defined via powers of the Laplacian,
used a maximal function introduced by Calder\'on \cite{calderon} in
characterizing Sobolev spaces for $p > 1$ to extend his results to $p \leq 1$.
Calder\'on's maximal
function was subsequently studied by Devore and Sharpley
\cite{devsha}, who showed that it is pointwise equivalent to the
following variant of the sharp function. For simplicity we only give
the definition in the special case corresponding to one derivative
in $\Lone$, which is what this article is concerned with. We will
call this function the {\em Sobolev sharp maximal function} (it is
also called a ``fractional sharp maximal function'' in
\cite{kintuo}):
\begin{dfn} For $f\in \Loneloc$, define $Nf$ by
$$
Nf(x)=\sup_{B:\,x\in B}\frac{1}{r(B)}\aver{B}|f-f_B| d\mu,
$$
where $B$ denotes a ball, $r(B)$ its radius and $f_B$ the average of $f$ over $B$.
\end{dfn}
Another definition of Hardy-Sobolev spaces on $\Rn$,  using second
differences, is given by Strichartz \cite{strichartz}, who also
obtains an atomic decomposition.  Further characterizations of
Hardy-Sobolev spaces on $\Rn$ by means of atoms are given in
\cite{cho} and \cite{LY}.  For related work see \cite{janson}.

Several recent results provide a connection between Hardy-Sobolev
spaces and the $p=1$ case of Haj{\l}asz's definition of $\Lp$
Sobolev spaces on a metric measure space $(X,d,\mu)$:

\begin{dfn}[Haj{\l}asz] Let $1\leq p\leq\infty$. The (homogeneous) Sobolev space $\dotMonep$ is the set of all functions
$u\in \Loneloc$ such that there exists a measurable function $g\geq0$, $g\in \Lp$, satisfying
 \begin{equation}\label{Mp1}
 |u(x)-u(y)|\leq d(x,y)(g(x)+g(y))\;\mu-a.e.
\end{equation}
We equip $\dotMonep$ with the semi-norm
$$
 \Arrowvert u\Arrowvert_{\dotMonep}=\inf_{g\textrm{ satisfies} (\ref{Mp1})}\Arrowvert g\Arrowvert_{p}.
$$
\end{dfn}
In the Euclidean setting, Haj{\l}asz \cite{hajlasz1} showed the
equivalence of this definition with the usual one  for $1 < p \leq
\infty$.  For $p \in (n/n+1,1]$, Koskela and Saksman \cite{KS} proved
that $\dotMonep(\Rn)$ coincides with the homogeneous Hardy-Sobolev
space $\dot{H}^1_p(\Rn)$ defined by requiring all first-order
partial derivatives of $f$ to lie in the real Hardy space $H_p$ (the
same space defined by Miyachi \cite{miyachi}).  In recent work \cite{KYZ}, the Haj{\l}asz Sobolev spaces $\dot{M}^{s}_p$, 
for $0<s\leq 1$ and $\frac{n}{n+s}<p<\infty$, are characterized as homogeneous grand Triebel-Lizorkin spaces.

In the more general setting of a metric space with a doubling
measure, Kinnunen and Tuominen \cite{kintuo} show that Haj{\l}asz's
condition is equivalent to Miyachi's maximal function
characterization, extending to $p = 1$ a previous result of
Haj{\l}asz and Kinnunen \cite{hajkin} for $p > 1$:

\begin{thm}[\cite{hajkin},\cite{kintuo}]
\label{MN1} For $1\leq p<\infty$
$$
\dotMonep= \{f\in \Loneloc:\, Nf\in \Lp\}
$$
with
$$\|f\|_{\dotMonep}\sim\|Nf\|_{p}.
$$
Moreover, if $f\in \Loneloc$ and $Nf\in \Lone$, then $f$ satisfies
\begin{equation}\label{MN}
|f(x)-f(y)|\leq Cd(x,y) (Nf(x)+Nf(y))
\end{equation}
for $\mu-a.e. \,x,\, y$.
\end{thm}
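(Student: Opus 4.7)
The plan is to reduce the theorem to the pointwise inequality (\ref{MN}); once it is established, the identification of $\dotMonep$ with $\{f\in\Loneloc:Nf\in\Lp\}$ and the comparability of norms follow, in one direction immediately and in the other via a standard maximal function estimate.

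My first step is to prove (\ref{MN}) by the classical telescoping chain-of-balls argument, adapted to the doubling metric setting. Fix two Lebesgue points $x,y$ of $f$ with $r=d(x,y)$, and for $k\geq 0$ set $B_k=B(x,2^{1-k}r)$ and $B_k'=B(y,2^{1-k}r)$. Each chain shrinks to its centre and doubling gives $\mu(B_k)\sim\mu(B_{k+1})$. Since $B_{k+1}\subset B_k$ and $x\in B_k$, one has
$$
|f_{B_{k+1}}-f_{B_k}|\leq \aver{B_{k+1}}|f-f_{B_k}|\,d\mu \leq \frac{\mu(B_k)}{\mu(B_{k+1})}\aver{B_k}|f-f_{B_k}|\,d\mu \leq C\,r(B_k)\,Nf(x)
$$
by the definition of $Nf$. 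Summing the resulting geometric series and using $f_{B_k}\to f(x)$ (Lebesgue's differentiation theorem in the doubling setting) yields $|f(x)-f_{B_0}|\leq Cr\,Nf(x)$, and likewise $|f(y)-f_{B_0'}|\leq Cr\,Nf(y)$. Comparing $f_{B_0}$ and $f_{B_0'}$ via a common enclosing ball $B^*=B(x,4r)\supset B_0\cup B_0'$, which contains both $x$ and $y$ and satisfies $\mu(B^*)\sim\mu(B_0)\sim\mu(B_0')$, gives $|f_{B_0}-f_{B_0'}|\leq Cr\bigl(Nf(x)+Nf(y)\bigr)$, and assembling the three estimates yields (\ref{MN}) at $\mu\otimes\mu$-almost every pair $(x,y)$.

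Given (\ref{MN}), taking $g:=C\,Nf$ exhibits a constant multiple of $Nf$ as a Haj{\l}asz gradient, so $Nf\in\Lp$ implies $f\in\dotMonep$ with $\|f\|_{\dotMonep}\leq C\|Nf\|_p$, for every $p\in[1,\infty)$. For the converse, if $g$ is any Haj{\l}asz gradient of $f$ and $B\ni x$ is any ball, then
$$
\frac{1}{r(B)}\aver{B}|f-f_B|\,d\mu \leq \frac{1}{r(B)}\aver{B}\aver{B}d(y,z)\bigl(g(y)+g(z)\bigr)\,d\mu(y)\,d\mu(z) \leq 4\aver{B}g\,d\mu,
$$
since $d(y,z)\leq 2r(B)$ on $B\times B$; taking the supremum over $B\ni x$ gives the pointwise bound $Nf(x)\leq 4\,\cM g(x)$. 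For $1<p<\infty$ the Hardy--Littlewood maximal theorem upgrades this to $\|Nf\|_p\leq C\|g\|_p$, and taking the infimum over admissible $g$ completes the equivalence.

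The main obstacle is the $p=1$ case of the direction $f\in\dotMone\Rightarrow Nf\in\Lone$: the pointwise bound $Nf\leq 4\,\cM g$ only yields the weak-type estimate $\mu\{Nf>\lambda\}\leq C\lambda^{-1}\|g\|_1$, which is insufficient for a strong $\Lone$ bound. To promote this to $\|Nf\|_1\leq C\|g\|_1$ I would invoke the more delicate argument from \cite{kintuo}, which uses a Calder\'on--Zygmund-type decomposition of the Haj{\l}asz gradient $g$ at each level $\lambda$ together with a careful analysis of how $Nf$ interacts with the resulting good/bad splitting of $f$.
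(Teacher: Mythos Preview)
The paper does not give its own proof of Theorem~\ref{MN1}; the result is quoted from \cite{hajkin} and \cite{kintuo} and used as a black box. So there is no paper proof to compare against, and I will simply evaluate your argument.

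Your telescoping proof of (\ref{MN}) and your treatment of the case $p>1$ (via $Nf\leq 4\cM g$ and the $L_p$ boundedness of $\cM$) are correct and are the standard arguments. The one place where you have not actually supplied a proof is the direction $\dotMone\Rightarrow Nf\in\Lone$: you identify the obstacle correctly but then only gesture at ``the more delicate argument from \cite{kintuo}'', describing it as a Calder\'on--Zygmund decomposition of $g$ at each level. That description is not quite the right mechanism, and in any case it is not a proof.

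A cleaner route, whose ingredients are already recorded in this paper, is the following. By Haj{\l}asz's sub-$1$ Sobolev--Poincar\'e inequality (\ref{Pg}) for a Haj{\l}asz pair $(f,g)$, for any $\frac{s}{s+1}\leq q<1$ and any ball $B\ni x$,
\[
\frac{1}{r(B)}\aver{B}|f-f_B|\,d\mu \;\leq\; \left(\aver{B}|f-f_B|^{\qstar}d\mu\right)^{1/\qstar} \frac{1}{r(B)} \;\leq\; C\left(\aver{\lambda B} g^q\,d\mu\right)^{1/q} \;\leq\; C\,\Mq g(x),
\]
so pointwise $Nf\leq C\,\Mq g = C\bigl(\cM(g^q)\bigr)^{1/q}$. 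Since $1/q>1$, Theorem~\ref{MIT} gives $\cM$ bounded on $L_{1/q}$, hence
\[
\|Nf\|_1 \leq C\|\Mq g\|_1 = C\|\cM(g^q)\|_{1/q}^{1/q} \leq C\|g^q\|_{1/q}^{1/q} = C\|g\|_1.
\]
This is exactly the trick used later in the paper at (\ref{mO}). No Calder\'on--Zygmund splitting is needed; the point is to drop below exponent $1$ on the gradient side so that the maximal operator becomes bounded.
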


We now restrict the discussion to a complete Riemannian manifold $M$
satisfying a doubling condition and a Poincar\'e inequality (see
below for definitions). In this setting, Badr and Bernicot
\cite{babe} defined a family of homogeneous atomic Hardy-Sobolev
spaces $\dotHStato$ and proved the following comparison
between these spaces:
\begin{thm}
\label{thm:comph} {\em (\cite{babe})} Let $M$ be a complete
Riemannian manifold satisfying a doubling condition and a Poincar\'e
inequality $(P_{q})$ for some $q>1$. Then $ \dot{HS}_{t,{\rm
ato}}^{1} \subset \dot{HS}_{\infty,{\rm ato}}^{1}$ for every $t\geq
q$ and therefore $\dot{HS}_{t_1,{\rm ato}}^{1}= \dot{HS}_{t_2,{\rm
ato}}^{1}$  for every $q\leq t_1,t_2\leq \infty$.
\end{thm}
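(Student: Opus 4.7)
The plan is to show that every $(1,t)$-atom of $\dot{HS}_{t,\text{ato}}^{1}$ can be written as an absolutely convergent sum of $(1,\infty)$-atoms with controlled coefficients. The key tool is a Sobolev-type Calderón--Zygmund decomposition: since we are working with a complete Riemannian manifold that is doubling and satisfies $(P_q)$, one has such a decomposition available for any $f$ whose Haj{\l}asz gradient lies in $L^{t}$ for $t \geq q$. This is where the hypothesis $t \geq q$ enters in an essential way.

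First, I would fix a $(1,t)$-atom $a$ associated to a ball $B_0$, with Haj{\l}asz gradient $g$ satisfying $\|g\|_t \leq r(B_0)^{-1}\mu(B_0)^{1/t-1}$ (up to constants, depending on the precise normalization of $\dotHStato$-atoms). I would then apply the Sobolev Calderón--Zygmund decomposition at threshold $\alpha_0 \sim r(B_0)^{-1}\mu(B_0)^{-1}$, producing a splitting $a = h_0 + \sum_j b_{0,j}$, where $h_0$ has $\|\nabla h_0\|_\infty \lesssim \alpha_0$ and each $b_{0,j}$ is supported in a ball $B_{0,j}$ with $\|\nabla b_{0,j}\|_t \lesssim \alpha_0 \mu(B_{0,j})^{1/t}$ and $\sum_j \mu(B_{0,j}) \lesssim \alpha_0^{-t}\|g\|_t^t$. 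By the choice of $\alpha_0$, the good part $h_0$ is (a uniform constant multiple of) a $(1,\infty)$-atom based on a controlled dilate of $B_0$, and the bad parts $b_{0,j}$ are themselves $(1,t)$-atoms (up to an absolute scalar) based on $B_{0,j}$, with $\sum_j \mu(B_{0,j}) \lesssim \mu(B_0)$.

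Second, I would iterate: at step $k$, each $b_{k-1,j}$ is a $(1,t)$-atom on $B_{k-1,j}$; apply the CZ decomposition at threshold $\alpha_k \sim 2^{k} r(B_{k-1,j})^{-1}\mu(B_{k-1,j})^{-1}$, which increases geometrically so that the bad measures shrink like $2^{-kt}\mu(B_{k-1,j})$. Each step outputs a $(1,\infty)$-atom with coefficient $\sim 2^{-k}$ times the coefficient of the parent, and a collection of new $(1,t)$-atoms supported in balls whose total measure forms a geometric series. Summing over $k$ and over the generations produces an atomic decomposition of $a$ into $(1,\infty)$-atoms whose coefficients are absolutely summable, with total norm bounded by a constant independent of $a$. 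This gives $\dot{HS}_{t,\text{ato}}^{1} \subset \dot{HS}_{\infty,\text{ato}}^{1}$; the reverse inclusion is a routine consequence of H\"older's inequality, and the equality for $q \leq t_1, t_2 \leq \infty$ follows.

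The main obstacle is arranging the iteration so that the bookkeeping closes: one must verify that at each stage the CZ output $b_{k,j}$ really is (a bounded multiple of) a $(1,t)$-atom on $B_{k,j}$ in the precise sense used in \cite{babe}, and that the $(1,\infty)$-atoms that come out of the good parts are genuinely supported in controlled dilates of the original ball $B_0$. Both points rely on the doubling condition (to handle the dilation factors coming out of the Whitney-type covering) and on the $(P_q)$ Poincar\'e inequality (which both makes the Haj{\l}asz gradient compatible with averaged oscillation and allows one to pass from $L^t$ control of gradients to the pointwise Lipschitz bound on the good part). Choosing the thresholds $\alpha_k$ geometrically is what produces the summable coefficient sequence in the end.
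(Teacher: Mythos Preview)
The paper does not contain a proof of this theorem: it is quoted verbatim from \cite{babe} (Badr--Bernicot) and used as a black box. So there is no ``paper's own proof'' to compare against here.

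That said, your outlined strategy---an iterated Sobolev Calder\'on--Zygmund decomposition that at each step peels off a Lipschitz ``good part'' (a $(1,\infty)$-atom) and leaves $(1,t)$-atoms on smaller balls with geometrically decaying total measure---is the standard route for upgrading atomic integrability indices, and is almost certainly what \cite{babe} does. Two points to tighten. First, the atom normalization in Definition~\ref{HHSA} is $\|\nabla a\|_t \le \mu(B)^{-1/t'}$, with no factor of $r(B)^{-1}$ and with the genuine Riemannian gradient, not a Haj{\l}asz upper gradient; your threshold $\alpha_0$ should therefore be $\sim \mu(B_0)^{-1}$ (not $r(B_0)^{-1}\mu(B_0)^{-1}$), and the CZ decomposition you invoke is the one for $\dot{W}^1_t$ functions under $(P_q)$, $t\ge q$, as in \cite{AC}. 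Second, you must check that the good part $h_0$ is actually supported in a fixed dilate of $B_0$ and inherits the cancellation $\int h_0 = 0$: since $a$ is supported in $B_0$ and the bad set $\{\mathcal{M}(|\nabla a|^q)^{1/q} > \alpha_0\}$ has measure $\lesssim \mu(B_0)$ and is concentrated near $B_0$ (by doubling and the weak-type bound), this goes through, and $\int b_i = 0$ for each bad piece forces $\int h_0 = \int a = 0$. With those fixes the argument closes.
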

In particular, under the assumption of the Poincar\'e inequality
$(P_1)$, for every $t > 1$ we can take $1 < q \leq t$ for which
$(P_{q})$ holds, so all the atomic Hardy-Sobolev spaces $\dotHStato$
coincide and can be denoted by $\dotHSato$.

The main result of this paper is to identify this atomic
Hardy-Sobolev space with Haj{\l}asz's Sobolev space for $p=1$:
\begin{thm}
\label{mainthm}
Let $M$ be a complete Riemannian manifold satisfying a doubling condition
and the Poincar\'e inequality $(P_1)$.
Then
$$\dotMone = \dotHSato.$$
\end{thm}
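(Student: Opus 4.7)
The strategy is to use Theorem~\ref{MN1} to identify $\dotMone$ with $\{f\in\Loneloc:\,Nf\in\Lone\}$ and then match this maximal-function description with the atomic one. By Theorem~\ref{thm:comph} and the fact that $(P_1)$ self-improves to $(P_q)$ for some $q>1$, I may fix once and for all an exponent $t\geq q$ and work with the atoms of $\dotHStato$.

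\emph{The inclusion $\dotHSato\subset\dotMone$.} The plan is to prove $\|Na\|_1\leq C$ uniformly for every $t$-atom $a$ supported in a ball $B$; sublinearity of $N$, the triangle inequality, and the infimum in the atomic norm then give $\|Nf\|_1\lesssim\|f\|_{\dotHSato}$, hence $f\in\dotMone$ by Theorem~\ref{MN1}. I would split $\int Na\,d\mu$ over $2B$ and $M\setminus 2B$. On $2B$ the Poincar\'e inequality $(P_q)$, H\"older's inequality, and the atomic gradient control $\|\grad a\|_t\leq\mu(B)^{-1+1/t}$ combine to give $\|Na\|_{\Lone(2B)}\lesssim 1$. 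Off $2B$, a point $x$ at distance $\sim 2^j r(B)$ from $B$ only sees balls $B'\ni x$ of radius $\gtrsim 2^j r(B)$ in the supremum defining $Na(x)$, and $\frac{1}{r(B')}\aver{B'}|a|\,d\mu$ decays geometrically in $j$ via the doubling condition, making the sum over dyadic annuli convergent.

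\emph{The inclusion $\dotMone\subset\dotHSato$.} For $f\in\dotMone$ with $Nf\in\Lone$, the plan is a Calder\'on-Zygmund/Whitney decomposition keyed to the open level sets $\Omega_j=\{Nf>2^j\}$, which are open by lower semicontinuity of $Nf$ and satisfy $\mu(\Omega_j)\leq 2^{-j}\|Nf\|_1$. Cover each $\Omega_j$ by Whitney balls $\Bij$ of bounded overlap with $r(\Bij)\sim\dist(\Bij,\Omega_j^c)$, fix a subordinate Lipschitz partition of unity $\{\chi_i^j\}$ with $|\grad \chi_i^j|\leq C/r(\Bij)$, set $\cij=f_{\Bij}$, and define good approximations $g_j=f\,\ind_{\Omega_j^c}+\sum_i\cij\,\chi_i^j$. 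Telescope $f=\sum_{j}(g_{j+1}-g_j)$; each difference is supported in $\Omega_j$ and splits further into pieces, each localized on a single Whitney ball, which after rescaling by coefficients $\lambda_i^j$ will be the atoms of $\dotHStato$.

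\emph{Where the main work lies.} The crux is verifying the atomic gradient normalization for each piece and summing $\sum_{i,j}|\lambda_i^j|$. Since each $\Bij$ lies within bounded dilation of a point of $\Omega_j^c$, the pointwise inequality~(\ref{MN}) of Theorem~\ref{MN1} gives $\aver{\Bij}|f-\cij|\,d\mu\leq C r(\Bij)\,2^j$ and controls the oscillation of $f$ on a fixed enlargement of $\Bij$. Combined with the Leibniz rule and $|\grad \chi_i^j|\leq C/r(\Bij)$, this yields an $\Lone$ bound on the gradient of each piece; the promotion to the atomic $L^t$ bound uses $(P_q)$ for $q>1$ (the doubling self-improvement of $(P_1)$). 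Choosing $\lambda_i^j\sim 2^j\mu(\Bij)$ gives $\sum_{i,j}|\lambda_i^j|\lesssim\sum_j 2^j\mu(\Omega_j)\lesssim\|Nf\|_1$ by the layer-cake formula. The main technical obstacle I anticipate is justifying convergence of the telescoping series at both ends: the $j\to+\infty$ tail uses $\int_{\Omega_j}|f|\to 0$, while the $j\to-\infty$ tail, being homogeneous, is only well-defined modulo constants and requires a careful limit argument exploiting $Nf\in\Lone$.
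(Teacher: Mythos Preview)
Your overall architecture matches the paper's: Proposition~\ref{comp1} for $\dotHSato\subset\dotMone$ and a Calder\'on--Zygmund/Whitney telescoping argument (Propositions~\ref{CZ} and~\ref{AHS}) for the reverse. The first inclusion is essentially as in the paper. For the second inclusion, however, two of your stated mechanisms do not work as written.

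\textbf{The $L^t$ promotion.} You obtain only an $\Lone$ bound on the gradient of each piece and then propose to ``promote'' it to $L^t$ via $(P_q)$ for $q>1$. This cannot succeed: $(P_q)$ bounds $\|f-f_B\|_q$ by $\|\grad f\|_q$, but for $f\in\dotMone$ you only know $\grad f\in\Lone$, not $L^q$ for $q>1$, so $(P_q)$ is not applicable to $f$. The paper's device is different and does not use any Poincar\'e inequality on $M$ for this direction. It takes the level sets of $\Mq(Nf)$ with $\frac{s}{s+1}<q<1$ (not of $Nf$ itself), so that each Whitney ball $\Bij$ satisfies $\bigl(\avert{\overline{\Bij}}(Nf)^q\bigr)^{1/q}\leq C\,2^j$; then the Haj{\l}asz Sobolev--Poincar\'e inequality~\eqref{PNq} (valid for every $u\in\dotMone$ without any hypothesis on $M$ beyond doubling) yields the needed $L^{q^*}$ control with $q^*>1$. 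Alternatively, one can note that $|\grad g^j|\leq C\,2^j$ pointwise (this is~\eqref{eg}), so $\ell^j=g^{j+1}-g^j$ is Lipschitz with constant $\sim 2^j$ and vanishes on $F^j$; on a Whitney ball $\Bjk$ this gives $|\ell^j|\leq C\,2^j r_k^j$ in $L^\infty$, hence $\|\grad(\ell^j\chi_k^j)\|_t\leq C\,2^j\mu(\Bjk)^{1/t}$ for every $t$. Either route works, but invoking $(P_q)$, $q>1$, does not.

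\textbf{The $j\to+\infty$ tail.} You write that this ``uses $\int_{\Omega_j}|f|\to 0$'', but in the homogeneous setting $f$ need not lie in $\Lone(M)$, so this integral may be infinite for every $j$. The paper instead shows $g^j\to f$ in $\dotWone$ by estimating $\sum_i\|\grad b_i^j\|_1$ as in~\eqref{gradbij}, using $\sum_j 2^j\mu(\Omega_j)<\infty$ and $|\grad f|\in\Lone$; the $j\to-\infty$ limit is handled by~\eqref{gradgj}--\eqref{gradgj1}. Your invocation of~\eqref{MN} to get $\aver{\Bij}|f-\cij|\leq C r_i^j 2^j$ is also slightly off: integrating~\eqref{MN} over $\Bij\subset\Omega^j$ brings in $\aver{\Bij} Nf$, which is not controlled; the correct route is to use a point $y\in\overline{\Bij}\cap F^j$ and the definition of $Nf(y)$ on the dilated ball, as in~\eqref{faze} and~\eqref{cij1}.
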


The definition of the atomic Hardy-Sobolev spaces, as well as the
doubling condition, the Poincar\'e inequality, and other
preliminaries, can be found in Section~\ref{prelims}. The proof of
Theorem~\ref{mainthm}, based on the characterization given by
Theorem~\ref{MN1} and a Calder\'on-Zygmund decomposition, follows in
Section~\ref{atomic-homo}. In Section~\ref{atomic-nonhomo}, a
nonhomogeneous version of Theorem~\ref{mainthm} is obtained.
Finally, in Section~\ref{sec:comp}, we characterize our
Hardy-Sobolev spaces in terms of derivatives. In particular, we show
that the space of differentials $df$ of our Hardy-Sobolev functions
coincides with the molecular Hardy space of differential one-forms
defined by Auscher, McIntosh and Russ \cite{AMR} (and by Lou and
McIntosh \cite{LM} in the Euclidean setting).

\section{Preliminaries}
\label{prelims} In all of this paper $M$ denotes a complete
non-compact Riemannian manifold. We write $T_xM$ for the tangent
space at the point $x \in M$, $\langle \cdot ,\cdot \rangle_x$ for
the Riemannian metric at $x$,  and $\mu$ for the Riemannian measure
(volume) on $M$.  The Riemannian metric induces a distance function
$\rho$ which makes $(M,\rho)$ into a metric space, and $B(x,r)$ will
denote the ball of radius $r$ centered at $x$ in this space.

Let $T_x^*M$ be the cotangent space at $x$, $\Lambda T_x^*M$ the
complex exterior algebra, and $d$ the exterior derivative acting on
$\Cinftyz(\Lambda T^*M)$.  We will work only with functions
($0$-forms) and hence for a smooth function $f$, $df$ will be a
$1$-form. In fact, in most of the paper we will deal instead with
the gradient $\grad f$, defined as the image of $df$ under the
isomorphism between $T_x^*M$ and $T_xM$ (see \cite{warner}, Section
4.10). Since this isomorphism preserves the inner product, we have
\begin{equation}
\label{gradient}
\langle df ,df \rangle_x = \langle \grad f,\grad f \rangle_x
\end{equation}

Letting $\Lp:=\Lp(M,\mu)$, $1 \leq p\leq \infty$, and denoting by
$|\cdot|$ the length induced by the Riemannian metric on the tangent
space (forgetting the subscript $x$ for simplicity), we can define
$\|\grad f\|_{p}:= \||\grad f|\|_{\Lp(M,\mu)}$  and, in view of
(\ref{gradient}), $\|df\|_{p} = \|\grad f\|_{p}$. If $d^*$ denotes
the adjoint of $d$ on $L_2(\Lambda T_x^*M)$, then the
Laplace-Beltrami operator $\Lap$ is defined by $dd^* + d^*d$.
However since $d^*$ is null on $0$-forms, this simplifies to $\Lap f
= d^*df$ on functions and we have, for $f,g \in \Cinftyz(M)$, using
(\ref{gradient}),
$$\langle\Delta f,g\rangle_{L_2(M)} = \int_M \langle\Delta f,g\rangle_x d\mu =
\int_M \langle df,dg\rangle_x d\mu = \langle \grad f ,\grad g
\rangle_{L_2(M)}.$$

We will use $\Lip(M)$ to denote the space of Lipschitz functions, i.e.\ functions $f$ satisfying, for some
$C < \infty$, the global Lipschitz condition
$$|f(x) - f(y)| \leq C\rho(x,y) \quad \forall \; x,y \in M.$$
The smallest such constant $C$ will be denoted by $\|f\|_\Lip$. By
$\Lipz(M)$ we will denote the space of compactly supported Lipschitz
functions. For such functions the gradient $\grad f$ can be defined
$\mu$-almost everywhere and is in $L_\infty(M)$, with $\|\grad
f\|_\infty \approx \|f\|_\Lip$ (see \cite{cheeger} for Rademacher's
theorem on metric measure spaces and also the discussion of upper
gradients in \cite{hajkosk}, Section 10.2).

\subsection{The doubling property}

\begin{dfn} Let $M$ be a Riemannian manifold. One says that $M$ satisfies the (global)
doubling property $(D)$ if there exists a
constant $C>0$, such that for all $x\in M,\, r>0 $ we have
\begin{equation*}\tag{$D$}
\mu(B(x,2r))\leq C \mu(B(x,r)).
\end{equation*}
\end{dfn}
\noindent Observe that if $M$ satisfies $(D)$ then
$$ \diam(M)<\infty\Leftrightarrow\,\mu(M)<\infty$$
(see \cite{ambrosio1}). Therefore if $M$ is a complete non-compact
Riemannian manifold satisfying $(D)$ then $\mu(M)=\infty$.
\begin{lem} Let $M$ be a Riemannian manifold satisfying $(D)$ and let
$s=\log_{2}C_{{\rm (D)}}$. Then for all $x,\,y\in M$ and $\theta\geq
1$
\begin{equation}\label{teta}
\mu(B(x,\theta R))\leq C\theta^{s}\mu(B(x,R)).
\end{equation}
\end{lem}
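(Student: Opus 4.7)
The plan is to iterate the doubling condition a suitable number of times. Given $\theta \geq 1$, I would choose the integer $k$ such that $2^{k-1} \leq \theta < 2^k$, equivalently $k \leq \log_2\theta + 1$. Since $B(x,\theta R) \subset B(x,2^k R)$, monotonicity of $\mu$ gives
$$\mu(B(x,\theta R)) \leq \mu(B(x, 2^k R)).$$

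Next I would apply the doubling property $(D)$ exactly $k$ times to conclude
$$\mu(B(x,2^k R)) \leq C_{(D)}^{\,k}\, \mu(B(x,R)).$$
Using the definition $s = \log_2 C_{(D)}$, we have $C_{(D)}^{\,k} = 2^{ks}$, and from $k \leq \log_2\theta + 1$ it follows that $2^{ks} \leq 2^s \theta^s$. Combining these gives the claim with $C = 2^s$.

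There is no real obstacle here: the result is a standard consequence of iterating $(D)$, and the only "step" is bookkeeping the exponent. The apparent mention of $y \in M$ in the statement is not used in the inequality, which depends only on $x$, $R$, and $\theta$; one could just as well absorb $y$ via the triangle inequality if a variant involving both points were intended, but for the stated inequality the argument above suffices.
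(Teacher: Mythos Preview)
Your argument is correct and is the standard iteration of the doubling condition. The paper actually states this lemma without proof (it is a well-known consequence of $(D)$), so there is nothing to compare against; your bookkeeping with $2^{k-1}\le\theta<2^k$ and $C_{(D)}^{\,k}=2^{ks}\le 2^s\theta^s$ is exactly the expected justification, and your observation that the variable $y$ plays no role in the stated inequality is also accurate.
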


\begin{thm}[Maximal theorem, \cite{CW1}]\label{MIT}
Let $M$ be a Riemannian manifold satisfying $(D)$. Denote by $\cM$
the non-centered Hardy-Littlewood maximal function over open balls
of $M$, defined by
 $$\cM f(x):=\underset{\genfrac{}{}{0pt}{}{B \ \textrm{ball}}{x\in B}} {\sup} \ |f|_{B},$$
 where $\displaystyle f_{E}:=\aver{E}f d\mu:=\frac{1}{\mu(E)}\int_{E}f d\mu.$
Then for every  $1<p\leq \infty$, $\cM$ is $\Lp$ bounded and
moreover it is of weak type $(1,1)$. Consequently, for
$r\in(0,\infty)$, the operator $\cM_r$ defined by
$$ \cM_rf(x):=\left[\cM(|f|^r)(x) \right]^{1/r} $$
is of weak type $(r,r)$ and $\Lp$ bounded for all $r<p\leq \infty$.
\end{thm}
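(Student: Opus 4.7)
The plan is to follow the classical Coifman--Weiss argument: establish a weak-type $(1,1)$ inequality for $\cM$ using a Vitali-type covering lemma that exploits the doubling property $(D)$, observe the trivial $L^{\infty}$ bound, and then interpolate. The statement for $\cM_r$ will follow by a simple rescaling, since $\cM_r f(x)^r = \cM(|f|^r)(x)$.

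For the weak-$(1,1)$ bound, fix $\lambda>0$ and set $E_\lambda=\{x\in M:\cM f(x)>\lambda\}$. Each $x\in E_\lambda$ lies in some ball $B_x$ with $\frac{1}{\mu(B_x)}\int_{B_x}|f|\,d\mu>\lambda$, hence $\mu(B_x)<\lambda^{-1}\|f\chi_{B_x}\|_1$. Provided the radii of these balls are uniformly bounded (which can be arranged by exhausting $E_\lambda$ by subsets on which this holds, since $\mu(M)=\infty$ and $f\in \Lone$ forces the radii to stay bounded), a Vitali-type selection lemma yields a countable disjoint subfamily $\{B_{x_i}\}$ such that $E_\lambda\subset\bigcup_i 5B_{x_i}$. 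Applying (\ref{teta}) with $\theta=5$ gives $\mu(5B_{x_i})\le C\mu(B_{x_i})$, and summing produces
\[
\mu(E_\lambda)\le C\sum_i \mu(B_{x_i})\le C\lambda^{-1}\sum_i\int_{B_{x_i}}|f|\,d\mu\le C\lambda^{-1}\|f\|_1,
\]
where the last inequality uses disjointness. The $L^\infty$ bound $\|\cM f\|_\infty\le\|f\|_\infty$ is immediate, and the Marcinkiewicz interpolation theorem then yields $\|\cM f\|_p\lesssim \|f\|_p$ for every $1<p<\infty$.

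For $\cM_r$, fix $r\in(0,\infty)$ and note that for any $\lambda>0$,
\[
\mu\{x:\cM_r f(x)>\lambda\}=\mu\{x:\cM(|f|^r)(x)>\lambda^r\}\le C\lambda^{-r}\int_M|f|^r\,d\mu,
\]
which is the weak-$(r,r)$ inequality. If $r<p\le\infty$, then $|f|^r\in L_{p/r}$ with $p/r>1$, so applying the already established $L^{p/r}$-boundedness of $\cM$ gives $\|\cM_r f\|_p=\|\cM(|f|^r)\|_{p/r}^{1/r}\le C\|f\|_p$.

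The main technical point is the covering lemma in the non-centered setting. Because the balls $B_x$ realizing the supremum need not be centered at $x$, one either invokes a Vitali/Besicovitch-type covering lemma valid in doubling metric measure spaces, or observes that the non-centered maximal function is pointwise comparable to its centered analogue (if $x\in B(y,r)$ then $B(y,r)\subset B(x,2r)$ and $\mu(B(x,2r))\le C\mu(B(y,r))$ by $(D)$), thereby reducing to the centered version for which the $5r$-covering lemma applies directly. Apart from this covering step, the remaining arguments are routine applications of interpolation and the layer-cake formula.
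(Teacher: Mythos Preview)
The paper does not actually prove this theorem: it is quoted as a classical result from Coifman--Weiss \cite{CW1} and stated without proof. Your argument is the standard one that \cite{CW1} contains---Vitali covering plus doubling for the weak $(1,1)$ bound, trivial $L^\infty$ bound, Marcinkiewicz interpolation, and rescaling for $\cM_r$---so there is nothing to compare; your write-up is correct and is precisely what the citation points to.
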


Recall that an operator $T$ is of weak type $(p,p)$ if there is $C>0$ such that for any $\alpha>0$, $\mu(\{x:\,|Tf(x)|>\alpha\})\leq \frac{C}{\alpha^p}\|f\|_p^p$.

\subsection{Poincar\'e inequality}
\begin{dfn}[Poincar\'{e} inequality on $M$] We say that a complete Riemannian manifold $M$ admits
\textbf{a Poincar\'{e} inequality $(P_{q})$} for some $q\in[1,\infty)$ if there exists a constant $C>0$ such that, for every function $f\in \Lipz(M)$ and every ball $B$ of $M$ of radius $r>0$, we have
\begin{equation*}\tag{$P_{q}$}
\left(\aver{B}|f-f_{B}|^{q} d\mu\right)^{1/q} \leq C r \left(\aver{B}|\grad f|^{q}d\mu\right)^{1/q}.
\end{equation*}
\end{dfn}

We also recall the following result
\begin{thm} {\em (\cite{hajlasz2}, Theorem 8.7)} Let $u \in \dotMone$ and
$g\in \Lone$ such that $(u,g)$ satisfies {\rm (\ref{Mp1})}. Take
$\frac{s}{s+1}\leq q <1$ and $\lambda>1$.  Then $(u,g)$ satisfies
the following Sobolev-Poincar\'e inequality: there is a constant
$C>0$ depending on $(D)$ and $\lambda$, independent of $(u,g)$ such
that for all balls $B$ of radius $r>0$,
\begin{equation}\label{Pg}
\left(\aver{B}|u-u_B|^{\qstar}d\mu\right)^{1/\qstar} \leq
Cr\left(\aver{\lambda B}g^q d\mu\right)^{1/q},
\end{equation}
where $\qstar=\frac{sq}{s-q}$.
\end{thm}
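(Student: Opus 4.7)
The plan is to derive the Sobolev--Poincar\'e inequality from the pointwise Haj{\l}asz bound (\ref{Mp1}) via a truncation/layer-cake argument of Maz'ya type. First, I would extract from (\ref{Mp1}) the basic $L^1$-Poincar\'e inequality on any ball: averaging $|u(x) - u(y)| \leq d(x,y)(g(x) + g(y))$ in $y \in B$ gives $|u(x) - u_B| \leq 2r(g(x) + g_B)$ for $\mu$-a.e.\ $x \in B$, and integrating once more in $x$ yields $\aver{B}|u - u_B| d\mu \leq 4 r g_B$. Combined with doubling, this is the standard input to the Haj{\l}asz--Koskela machinery, which self-improves to a weak-type endpoint Sobolev--Poincar\'e estimate of type $(q_0^{*},1)$ with $q_0^{*} = s/(s-1)$.

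Second, I would invoke the truncation stability of Haj{\l}asz pairs. Fix $c = u_B$, set $t_j = 2^j$ for $j \in \mathbb{Z}$, and consider
$$u_j := \min\bigl\{(|u - c| - t_j)_+,\; t_j\bigr\}.$$
A case analysis shows that $(u_j, C g \chi_{E_j^{\ast}})$ satisfies (\ref{Mp1}), where $E_j^{\ast}$ is a suitable enlargement of the annular level set $E_j = \{t_j \leq |u - c| \leq t_{j+1}\}$. The point is that $u_j$ vanishes on $\{|u - c| \leq t_j\}$ and equals the constant $t_j$ on $\{|u - c| \geq t_{j+1}\}$, so the nontrivial variation of $u_j$ is concentrated on $E_j$, and the Haj{\l}asz condition transfers to $(u_j, g \chi_{E_j^{\ast}})$ up to dimensional constants.

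Third, I would combine Steps~1 and~2 via the layer-cake formula
$$\int_B |u - c|^{q^*} d\mu \lesssim \sum_{j} t_j^{q^*}\,\mu\bigl(B \cap \{|u - c| \geq t_j\}\bigr).$$
Applying the endpoint weak-type estimate from Step~1 to each $(u_j, g\chi_{E_j^{\ast}})$ on $\lambda B$, then using H\"older's inequality $\int_{E_j^{\ast}} g\,d\mu \leq \bigl(\int_{E_j^{\ast}} g^q d\mu\bigr)^{1/q}\mu(E_j^{\ast})^{1 - 1/q}$, and finally summing the resulting geometric series in $j$ (which converges precisely because of the exponent identity $1/q - 1/q^* = 1/s$) yields the desired bound in terms of $\int_{\lambda B} g^q d\mu$.

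The main obstacle is the bookkeeping in the last step: one must carefully choose the truncation levels and the enlargement $E_j^{\ast}$ so that the Haj{\l}asz condition transfers without loss, and then match the weights $t_j^{q^*}$ against the measure estimates for $\mu(E_j)$ in the summation. The hypothesis $q \geq s/(s+1)$ is exactly what ensures $q^* \geq 1$, so that the layer-cake integrand carries a genuine norm rather than a quasi-norm; the dilation $\lambda > 1$ enters because the enlarged annular sets $E_j^{\ast}$ must sit inside $\lambda B$ in order to invoke the integral of $g^q$ over $\lambda B$ on the right-hand side.
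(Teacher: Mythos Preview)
The paper does not supply its own proof of this statement; it is simply quoted from \cite{hajlasz2}, Theorem~8.7, and used as a black box thereafter. So there is no ``paper's proof'' to compare against. Your outline does follow the standard Haj{\l}asz--Koskela/Maz'ya architecture used in that reference: a weak-type Sobolev--Poincar\'e inequality, stability of the Haj{\l}asz condition under truncation, and a layer-cake summation.

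There is, however, a genuine gap in your third step. The inequality you invoke,
\[
\int_{E_j^{\ast}} g\,d\mu \leq \Bigl(\int_{E_j^{\ast}} g^q\,d\mu\Bigr)^{1/q}\mu(E_j^{\ast})^{1-1/q},
\]
is H\"older's inequality only for $q \geq 1$; for $q<1$ it is false in general (take $g = 2\chi_F$ with $\mu(F)=\tfrac12\mu(E_j^{\ast})$). Consequently you cannot pass from a weak $(1^*,1)$ estimate to a $(q^*,q)$ bound by this route, and the exponents in the resulting sum will not close. The correct organisation, as carried out in \cite{hajlasz2} and \cite{hajkosk}, is to establish the weak $(q^*,q)$ estimate \emph{directly} for the Haj{\l}asz pair: pick $y_0 \in B$ with $g(y_0) \leq C_q\bigl(\avert{B} g^q\bigr)^{1/q}$ (such points fill at least half of $B$ by Chebyshev), use the pointwise bound $|u(x)-u(y_0)|\leq 2r\bigl(g(x)+g(y_0)\bigr)$, and chain over dyadic balls $B(x,2^{-j}r)$ to extract the Sobolev exponent $q^*$ from the doubling dimension $s$. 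The dilation $\lambda>1$ enters precisely here, because the chain of balls centred at $x\in B$ spills slightly outside $B$; it has nothing to do with enlarging the level sets $E_j$. Once the weak $(q^*,q)$ inequality is in hand, Maz'ya's truncation upgrades it to the strong inequality at the \emph{same} pair of exponents, and your summation in $j$ then closes via the elementary inequality $\sum_j a_j^{q^*/q} \leq \bigl(\sum_j a_j\bigr)^{q^*/q}$ for disjointly supported $a_j = \int_{E_j} g^q$.
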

Applying this together with Theorem~\ref{MN1}, for $u\in \dotMone$
we have
\begin{equation}\label{PNq}
\left(\aver{B}|u-u_B|^{\qstar}d\mu\right)^{1/\qstar}\leq Cr\left(\aver{\lambda B}(Nu)^q d\mu\right)^{1/q}
\end{equation}
for all balls $B$.

\subsection{Comparison between $Nf$ and $|\grad f|$}
The following Proposition shows that the maximal function $Nf$ controls
the gradient of $f$ in the pointwise almost-everywhere sense.  In
the Euclidean setting this result was demonstrated by Calder\'on (see
\cite{calderon}, Theorem 4) for his maximal function $N(f,x)$
(denoted by $\fstar$ in Section~\ref{sec:Moneone} below), which was shown to
be pointwise equivalent to our $Nf$ by Devore and Sharpley (see also
the stronger inequality (5.5) in  \cite{miyachi}, which bounds the
maximal function of the partial derivatives).

Recall that if $u \in \Cinftyz(M)$, given any smooth vector field
$\bPhi$ with compact support, we can write, based on
(\ref{gradient}) and the definition of $d^*$,
$$
\int_M \langle \grad u,\bPhi\rangle_x d\mu := \int_M
\langle du ,\omega_\bPhi\rangle_x d\mu = \int_M
u(d^*\omega_\bPhi) d\mu,
$$
where $\omega_\bPhi$ is the $1$-form corresponding to ${\bf \Phi}$
under the isomorphism between the tangent space $T_xM$ and the
co-tangent space $T_x^*M$ (see \cite{warner}, Section 4.10).
Denoting $d^*\omega_\bPhi$ by $\Div \bPhi$, we can define, for $u\in
\Loneloc$, the gradient $\grad u$ in the sense of distributions by
\begin{equation}
\label{graddist}
\langle \grad u,\bPhi\rangle :=
-\int_M u(\Div \bPhi) d\mu
\end{equation}
for all smooth vector fields $\bPhi$ with compact support
(see \cite{MPPP}). When $M$
is orientable,  $\Div \bPhi$ is given by $*d* \omega_\bPhi$ with
$*$ the Hodge star operator (see \cite{warner}), and in the Euclidean case this
corresponds to the usual notion of divergence of a vector field.

\begin{prop} \label{nabN}Assume that M satisfies $(D)$, and suppose $u\in \Loneloc$
with $Nu\in \Lone$.  Then $\grad u$, initially defined by
(\ref{graddist}), is given by an $\Lone$ vector field and satisfies
$$
|\grad u| \leq CNu\qquad   \mu-{\rm a.e.}
$$
\end{prop}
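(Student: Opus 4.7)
The starting point is Theorem~\ref{MN1}: since $Nu\in\Lone$ and $u\in\Loneloc$, we have $u\in\dotMone$ together with the pointwise Haj{\l}asz-type inequality (\ref{MN}): $|u(x)-u(y)|\leq Cd(x,y)(Nu(x)+Nu(y))$ for $\mu$-a.e.\ $x,y$. The plan is to convert this nonlocal ``Lipschitz-at-a-point'' estimate into an $\Lone$ pointwise bound on $\grad u$ by a Lipschitz-truncation argument: on each super-level set $E_\lambda=\{Nu\leq \lambda\}$, $u$ behaves like a $C\lambda$-Lipschitz function, and one should then glue the gradients of Lipschitz extensions across all $\lambda$.

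For fixed $\lambda>0$, outside a null set the pointwise inequality (\ref{MN}) forces $u|_{E_\lambda}$ to be $2C\lambda$-Lipschitz, so McShane's extension (which works in any metric space) yields $u_\lambda\in\Lip(M)$ with $\|u_\lambda\|_\Lip\leq 2C\lambda$ and $u_\lambda=u$ $\mu$-a.e.\ on $E_\lambda$. By the discussion preceding the Proposition, $u_\lambda$ has an a.e.-defined gradient $\grad u_\lambda\in L_\infty$ with $|\grad u_\lambda|\leq C'\lambda$. For $\lambda_1\leq\lambda_2$ the functions $u_{\lambda_1}$ and $u_{\lambda_2}$ agree $\mu$-a.e.\ on $E_{\lambda_1}$, so the locality of weak gradients for Lipschitz functions (standard in $\Rn$, transferred to $M$ via local coordinate charts) forces $\grad u_{\lambda_1}=\grad u_{\lambda_2}$ $\mu$-a.e.\ on $E_{\lambda_1}$. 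One may therefore define
$$V(x):=\grad u_\lambda(x)\quad\text{for any }\lambda\geq Nu(x),$$
obtaining a $\mu$-a.e.\ well-defined measurable vector field satisfying $|V(x)|\leq C'Nu(x)$; in particular $V\in\Lone$ since $Nu\in\Lone$.

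It remains to identify $V$ with $\grad u$ in the distributional sense of (\ref{graddist}). For a smooth compactly supported vector field $\bPhi$, write
$$\int u\Div\bPhi\,d\mu=\int u_\lambda\Div\bPhi\,d\mu+\int (u-u_\lambda)\Div\bPhi\,d\mu.$$
The first term equals $-\int\langle\grad u_\lambda,\bPhi\rangle\,d\mu$ by integration by parts for the Lipschitz function $u_\lambda$ tested against the smooth compactly supported $\bPhi$, and on $E_\lambda$ this agrees with $-\int\langle V,\bPhi\rangle\,d\mu$. The second term is supported on $\supp\bPhi\cap\{Nu>\lambda\}$, whose $\mu$-measure tends to $0$ as $\lambda\to\infty$ because $Nu\in\Lone$. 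Passing to the limit yields $\grad u=V$ as distributions, whence $|\grad u|\leq C'Nu$ $\mu$-a.e.\ and $\grad u\in\Lone$.

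The main obstacle is the last limiting step: the Lipschitz constant of $u_\lambda$ grows with $\lambda$, so one cannot bound $|u-u_\lambda|$ uniformly off $E_\lambda$, and a naive dominated convergence argument fails. The fix is to exploit the explicit McShane formula $u_\lambda(x)=\inf_{y\in E_\lambda}\{u(y)+2C\lambda\,\rho(x,y)\}$ together with the $\Loneloc$ property of $u$, combined with the fact that on $\supp\bPhi\cap\{Nu>\lambda\}$ (a set of vanishing measure) one can still control $u_\lambda$ by $u$ at nearby points of $E_\lambda$; alternatively, one can bypass the remainder by taking weak-$*$ limits of the uniformly $\Lone$-bounded sequence $\{\grad u_\lambda\}_\lambda$ (restricted to $E_\lambda$) and identifying the limit with $V$ via the distributional pairing. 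A further small technicality is the justification of the locality of weak gradients on the manifold, but this reduces to the Euclidean statement via coordinate charts.
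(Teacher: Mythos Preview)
Your Lipschitz-truncation approach is correct in outline and genuinely different from the paper's, but the limiting step you flag at the end is left unresolved; the fixes you offer are too vague to close it. The missing ingredient is in fact simpler than what you propose: since $Nu\in\Lone$, one has $\lambda\,\mu(\{Nu>\lambda\})\leq\int_{\{Nu>\lambda\}}Nu\,d\mu\to 0$ as $\lambda\to\infty$. This single observation disposes of both remainder terms. On $\{Nu>\lambda\}$ the bound $|\grad u_\lambda|\leq C'\lambda$ yields $\int_{\{Nu>\lambda\}}|\grad u_\lambda|\,|\bPhi|\,d\mu\leq C'\lambda\|\bPhi\|_\infty\,\mu(\{Nu>\lambda\})\to 0$; and fixing once and for all a good point $y_0\in E_{\lambda_0}\cap\supp\bPhi$ (possible since $\mu(\{Nu>\lambda_0\})<\mu(\supp\bPhi)$ for large $\lambda_0$), the Lipschitz bound gives $|u_\lambda(x)|\leq|u(y_0)|+2C\lambda\,\diam(\supp\bPhi)$ on $\supp\bPhi$, so $\int_{\{Nu>\lambda\}\cap\supp\bPhi}|u_\lambda|\,d\mu\to 0$ by the same mechanism. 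With these two estimates and the dominated convergence $\int_{E_\lambda}\langle V,\bPhi\rangle\to\int_M\langle V,\bPhi\rangle$ (via $|V|\leq C'Nu\in\Lone$), the identification $\grad u=V$ follows cleanly; no weak-$*$ compactness or delicate McShane analysis is needed.

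The paper proceeds differently: it approximates $u$ by discrete convolutions $u_r(x)=\sum_j\varphi_j(x)\,u_{3B_j}$, where $\{\varphi_j\}$ is a $C/r$-Lipschitz partition of unity subordinate to a cover of $M$ by balls of radius $r$. Using $\sum_j\grad\varphi_j=0$ one obtains the pointwise bound $|\grad u_r(x)|\leq CK\,Nu(x)$ \emph{uniformly in $r$}, and separately $u_r\to u$ in $\Loneloc$ via the Sobolev--Poincar\'e inequality (\ref{PNq}). Then the pairing $\int u\,\Div\bPhi=\lim_{r\to 0}\int u_r\,\Div\bPhi$ is bounded directly by $C\int Nu\,|\bPhi|$; this first shows $u\in BV(M)$ with $|Du|(M)\leq C\|Nu\|_1$, and then that the variation measure $|Du|$ is absolutely continuous with respect to $\mu$ with density $\leq C\,Nu$. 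The trade-off: the paper's uniform gradient bound sidesteps any balancing of growing Lipschitz constants against shrinking level sets and avoids invoking locality of weak gradients on a manifold, while your approach (once the limit is properly justified) constructs the $\Lone$ vector field $V$ directly and skips the BV detour.
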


\begin{proof}
Fix $r>0$. We begin with a covering of $M$  by balls $B_i=B(x_i,r)$,
$i=1,2...$ such that
\begin{itemize}
\item[1.]  $M\subset \cup_iB_i$,
\item[2.] $\sum_i \ind_{6B_i}\leq K$.
\end{itemize}
Note that the constant $K$ can be taken independent of $r$. Then we
take $\{\varphi_i\}_i$ a partition of unity related to the covering
$\{B_i\}_i$ such that $0\leq \varphi_i\leq 1$, $ \varphi_i=0$ on
$(6B_i)^c$, $ \varphi_i\geq c $ on $ 3B_i$ and $\sum_i \varphi_i=1$.
The $\varphi_i$'s are $C/r$ Lipschitz. For details concerning this
covering we refer to \cite{FHK}, \cite{kintuo}, \cite{hei},
\cite{CW2}. Now let (see \cite{FHK}, p.~1908 and \cite{kintuo},
Section 3.1)
\begin{equation}
\label{dfn:ur}
u_r(x)=\sum_j  \varphi_j(x)u_{3B_j}.
\end{equation}
The sum is locally finite and defines a Lipschitz function so we can take its
gradient and we have, for $\mu$-almost every $x$,
\begin{align}
\label{urmaxN}
|\grad u_r(x)|&=|\sum_j  \grad \varphi_j(x)u_{3B_j}| \nonumber
\\
&=|\sum_{\{j : x\in 6B_j\}}  \grad
\varphi_j(x)(u_{3B_j}-u_{B(x,9r)})| \nonumber
\\
&
\leq C K\frac{1}{r}  \aver{B(x,9r)}|u-u_{B(x,9r)}|d\mu \nonumber
\\
&\leq CK Nu(x).
\end{align}
We used the fact that $\sum \grad \phi_j = 0$ and that for $x \in
6B_j$, $3B_j\subset B(x,9r)$.

To see that $u_r\rightarrow u$ $\mu-a.e.$ and moreover in $\Lone$
when $r\rightarrow 0$  (see also \cite{FHK},p.\ 1908),  write, for
$x$ a Lebesgue point of $\mu$,
\begin{equation*}
|u_r(x)-u(x)|\leq  \sum_j |\varphi_j(x)||u(x)-u_{3B_j}|\leq
\sum_{\{j : x\in 6B_j\}}|u(x)-u_{3B_{j}}|\leq C K r
\Mq(Nu)(x)
\end{equation*}
where $\frac{s}{s+1}\leq q <1$. The last inequality follows from
estimates of $|u(x)-u_{B(x,9r)}|$ and $|u_{3B_j}-u_{B(x,9r)}|$, $x
\in 6B_j$, which are the same as estimates (12)-(14) in the proof of
Lemma 1 in \cite{kintuo}, using the doubling property and
(\ref{PNq}).

Now let $\bPhi$ be a smooth vector field with compact support. Using
the convergence in $\Lone$, the fact that $\Div \bPhi \in
\Cinftyz(M)$, and the estimate on $|\grad u_r|$ above, we have
 \begin{align*}
 |\int_M \langle \grad u,\bPhi\rangle_x d\mu|&=|\int_M u(\Div \bPhi) d\mu|
\\
& = |\lim_{r \ra 0}\int_M  u_r (\Div \bPhi)  d\mu|
\\
& \leq \limsup_{r \ra 0}\int_M |\grad u_r | |\bPhi| d\mu \leq
CK \int |Nu| |\bPhi| d\mu.
\end{align*}
Taking the supremum of the left-hand-side over all such $\bPhi$ with $|\bPhi| \leq 1$,
we get that the total variation of $u$ is bounded (see \cite{MPPP}, (1.4), p.\ 104), i.e.
$$|Du|(M) \leq C\|Nu\|_{\Lone(M)} < \infty,$$
hence $u$ is a function of bounded variation on $M$, and $|Du|$ defines a
finite measure on $M$.  We can write the distributional gradient as
$$\langle \grad u,\bPhi\rangle = \int_M \langle X_u,\bPhi\rangle_x d|Du|$$
for some vector field $X_u$ with $|X_u| = 1$ a.e.\ (see again
\cite{MPPP}, p.\ 104 where this is expressed in terms of the
corresponding $1$-form $\sigma_u$). Moreover, from the above
estimates and the fact that $Nu \in \Lone$, we further deduce that
the measure $|Du|$ is absolutely continuous with respect to the
Riemannian measure $\mu$, so there is an $\Lone$ function $g$ such
that we can write $\grad u = gX_u$, and $|\grad u| \leq C Nu$,
$\mu-a.e.$
\end{proof}

\begin{cor} \label{MW} Assume that $M$ satisfies $(D)$. Then $$
\dotMone \subset \dotWone.
$$
\end{cor}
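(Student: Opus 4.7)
The plan is to chain together the two results just established: Theorem~\ref{MN1} (applied with $p=1$) and Proposition~\ref{nabN}. The corollary asserts $\dotMone\subset\dotWone$, so starting from $u\in\dotMone$ I need to produce a distributional gradient that is an $L_1$ vector field with norm controlled by $\|u\|_{\dotMone}$.

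First, I would take $u\in\dotMone\subset\Loneloc$. By Theorem~\ref{MN1} with $p=1$, the Sobolev sharp maximal function satisfies $Nu\in\Lone$ together with the norm equivalence $\|Nu\|_1\sim\|u\|_{\dotMone}$. This places $u$ exactly in the hypothesis of Proposition~\ref{nabN}.

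Next, Proposition~\ref{nabN} (whose doubling assumption is available) yields that the distributional gradient $\grad u$, defined a priori via (\ref{graddist}), is in fact represented by an $\Lone$ vector field and satisfies the pointwise bound $|\grad u|\leq C\,Nu$ a.e. Integrating this estimate over $M$ gives
\[
\|\grad u\|_1=\||\grad u|\|_{\Lone(M,\mu)}\leq C\|Nu\|_1\leq C'\|u\|_{\dotMone}<\infty,
\]
which is exactly the statement that $u\in\dotWone$ with continuous inclusion.

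There is essentially no obstacle here — the work has all been done in Theorem~\ref{MN1} and Proposition~\ref{nabN}; the corollary is simply their composition, and the only thing to notice is that Theorem~\ref{MN1} supplies the $\Lone$ control on $Nu$ precisely at the endpoint $p=1$ where Proposition~\ref{nabN} needs it.
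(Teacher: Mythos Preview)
Your proposal is correct and is exactly the argument the paper gives: the corollary is the composition of Theorem~\ref{MN1} (at $p=1$) and Proposition~\ref{nabN}. The paper's proof is the one-line statement ``The result follows from Proposition~\ref{nabN} and Theorem~\ref{MN1},'' which you have simply unpacked.
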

 \begin{proof} The result follows from Proposition~\ref{nabN} and Theorem~\ref{MN1}.
 \end{proof}

\subsection{Hardy spaces}

We begin by introducing the maximal function characterization of the
real Hardy space $\Hone$.
\begin{dfn} \label{Hone-max}Let $f\in \Loneloc (M)$. We define its grand maximal function, denoted
by $\fplus$, as follows:
\begin{equation}
\label{def:fplus}
\fplus(x):=\sup_{\varphi \in \Tone(x)}\left|\int f\varphi d\mu\right|
\end{equation}
where $\Tone(x)$ is the set of all test functions $\psi \in \Lipz(M)$ such that for some ball
$B:=B(x,r)$ containing the support of $\psi$,
\begin{equation}
\label{Tone}
\|\psi\|_{\infty} \leq \frac{1}{\mu(B)}, \qquad  \|\grad \psi\|_{\infty}\leq \frac{1}{r\mu(B)}.
\end{equation}
Set $\Honemax(M) = \{f \in \Loneloc (M): \fplus \in \Lone(M)\}$.
\end{dfn}
While this definition assumes $f$ to be only locally integrable, 
by taking an appropriate sequence $\varphi_\epsilon \in \Tone(x)$,
the Lebesgue differentiation theorem implies that
\begin{equation}
\label{fleqfplus}
|f(x)| = \lim_{\epsilon \ra 0} \left|\int f\varphi_\epsilon\right| \leq \fplus(x) \; \mbox{ for $\mu$-a.e.}\; x,
\end{equation}
so $\Honemax(M) \subset \Lone(M)$.

Another characterization is given in terms of atoms (see \cite{CW2}).
\begin{dfn} \label{Hone-atoms}
 Fix $1<t\leq \infty$, $\frac{1}{t} + \frac{1}{t'} = 1$.
 We say that a function $a$ is an $\Hone$-atom if
\begin{itemize}
\item[$1$.] $a$ is supported in a ball $B$,
\item[$2$.] $\|a\|_{t} \leq \mu(B)^{-\frac{1}{t'}}$, and
\item[$3$.] $\int a d\mu=0$.
\end{itemize}
We say $f$ lies in the atomic Hardy space $\Honeato$ if $f$ can be
represented, in $\Lone(M)$, by
\begin{equation}
\label{Hone-atom-decomp}
f = \sum \lambda_j a_j
\end{equation}
for sequences of $\Hone$-atoms $\{a_j\}$ and scalars $\{\lambda_j\} \in \ell^1$.
Note that this representation is not unique and we define
$$\|f\|_{\Honeato} := \inf \sum |\lambda_j|,$$
where the infimum is taken over all atomic decompositions \rm{(\ref{Hone-atom-decomp}}).
\end{dfn}

A priori this definition depends on the choice of $t$.  However, we claim
\begin{prop}
\label{prop-Hone}
Let $M$ be a complete Riemannian manifold satisfying $(D)$. Then
 $$\Honeato(M) = \Honemax(M)$$
 with equivalent norms
 $$\|f\|_{\Honeato}\approx \|\fplus\|_1$$
(where the constants of proportionality depend on the choice of $t$).
\end{prop}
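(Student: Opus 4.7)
The plan is to prove the two inclusions separately, the first being standard and the second requiring a Calder\'on--Zygmund construction on $M$ adapted to the grand maximal function.

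For $\Honeato \subset \Honemax$, by the $\Lone$-convergence in (\ref{Hone-atom-decomp}) and the sublinearity of $f \mapsto \fplus$, it suffices to show that every $\Hone$-atom $a$ satisfies $\|a^+\|_1 \leq C$ uniformly. Let $a$ be supported in a ball $B$ of radius $r_B$. I would split $\|a^+\|_1$ into integrals over $2B$ and $(2B)^c$. On $2B$, for any $x$ and any $\varphi \in \Tone(x)$ supported in a ball $B_\varphi\ni x$, the bound $\|\varphi\|_\infty \leq \mu(B_\varphi)^{-1}$ gives $|\int a\varphi\,d\mu|\leq \cM a(x)$, so $a^+ \leq \cM a$ everywhere; then H\"older together with the $\Lp$-boundedness of $\cM$ from Theorem~\ref{MIT} yields
$$\|a^+\|_{\Lone(2B)} \leq \mu(2B)^{1/t'}\|\cM a\|_{t} \leq C\mu(B)^{1/t'}\|a\|_{t} \leq C.$$
On $(2B)^c$, I would use the cancellation $\int a\,d\mu = 0$ to write $\int a\varphi = \int a(\varphi - \varphi(x_B))$ and exploit the Lipschitz bound on $\varphi$: a short geometric argument shows that if $x \notin 2B$ and $\supp\varphi \cap B \neq \emptyset$, then $r_\varphi \gtrsim \rho(x,x_B)$ and, by doubling, $\mu(B_\varphi) \gtrsim \mu(B(x_B,\rho(x,x_B)))$. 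Combined with $\|a\|_1 \leq 1$, this gives a pointwise bound on $a^+(x)$ by $Cr_B/[\rho(x,x_B)\mu(B(x_B,\rho(x,x_B)))]$, which is integrable over $(2B)^c$ by doubling.

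For $\Honemax \subset \Honeato$, the argument follows the classical Calder\'on--Zygmund construction, as developed for spaces of homogeneous type by Coifman--Weiss and Mac\'ias--Segovia. Let $f\in \Honemax$ and set $\Omega_k = \{\fplus > 2^k\}$. Up to a multiplicative factor coming from doubling in the definition of $\Tone(x)$, the grand maximal function $\fplus$ is lower semicontinuous, so the $\Omega_k$ may be taken open. I would cover each $\Omega_k$ by a Whitney-type family of balls $\{B_{k,i}\}$ with bounded overlap, take a subordinate partition of unity $\{\phi_{k,i}\}$ with $|\grad\phi_{k,i}|\leq C/r_{k,i}$, and set
$$c_{k,i} = \frac{\int f\phi_{k,i}\,d\mu}{\int \phi_{k,i}\,d\mu}, \qquad b_{k,i}=(f-c_{k,i})\phi_{k,i}, \qquad g_k = f - \sum_i b_{k,i}.$$
The goal is to verify (a) $\|g_k\|_\infty \leq C\cdot 2^k$, using $|f| \leq \fplus$ a.e.\ (which is (\ref{fleqfplus})) and testing against the $\phi_{k,i}/\int\phi_{k,i}\,d\mu$, which lie in $C\cdot\Tone(x)$ for $x\in B_{k,i}$; (b) $\sum_i \|b_{k,i}\|_1 \leq C\mu(\Omega_k)\cdot 2^k$; and (c) the telescoping differences $g_{k+1}-g_k$ can be rearranged into scalar multiples of $\Hone$-atoms supported in the $B_{k,i}$, the cancellation being provided by the averaging against $\phi_{k,i}$. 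Summing gives $f = \sum_k(g_{k+1}-g_k)$ in $\Lone$ with total atomic norm controlled by $\sum_k 2^k\mu(\Omega_k) \leq C\|\fplus\|_1$.

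The main obstacle is the second direction: the $L^\infty$ bound on $g_k$ and the verification that the bad parts yield genuine $\Hone$-atoms (with both the size condition and the vanishing-moment condition) requires delicate control of $f$ against the full family $\Tone(x)$, together with careful bookkeeping of the Whitney geometry at successive dyadic levels. In practice one would either carry out this construction in the Riemannian setting, using the doubling property and Theorem~\ref{MIT} to handle the overlaps, or invoke the existing atomic decomposition theory on spaces of homogeneous type.
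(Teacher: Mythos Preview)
Your proposal is correct and, for the inclusion $\Honeato \subset \Honemax$, essentially identical to the paper's argument (split over $2B_0$ and dyadic annuli, use $a^+ \leq C\cM a$ near $B_0$ and the cancellation plus Lipschitz bound far away).

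For the converse, the overall Calder\'on--Zygmund scheme is the same, but the paper takes a slightly different route. Rather than working directly with $\Omega_k = \{\fplus > 2^k\}$, the paper sets $\Omega_j = \{\Mq(\fplus) > 2^j\}$ with $\frac{s}{s+1} < q < 1$, and then obtains the $\Hone$ atomic decomposition as a corollary of its nonhomogeneous Hardy--Sobolev decomposition (Propositions~\ref{CZN} and~\ref{AHSN}), simply discarding the gradient estimates that involve $Nf$. Two small gains from using $\Mq(\fplus)$: the level sets are automatically open (so the lower-semicontinuity issue you flag disappears), and the $\ell^1$ bound $\sum_j 2^j\mu(\Omega_j) \lesssim \|\fplus\|_1$ is immediate from the $L_{1/q}$-boundedness of $\cM$. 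Your direct approach is the classical Mac\'ias--Segovia/Uchiyama one and works just as well; the paper's detour is natural only because the Hardy--Sobolev machinery is already in place.

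One point in your sketch needs sharpening: when you bound $|c_{k,i}|$ by noting that $\phi_{k,i}/\int\phi_{k,i} \in C\cdot\Tone(x)$ ``for $x \in B_{k,i}$'', every such $x$ lies in $\Omega_k$, so $\fplus(x) > 2^k$ and you get nothing. You must instead test at a point $y_0$ of $F_k$ lying in the \emph{dilated} Whitney ball $C_2 B_{k,i}$, using doubling to see that the normalized bump still belongs to $C\cdot\Tone(y_0)$. The paper handles the analogous step by averaging $(\fplus)^q$ over $\Bibar$ and invoking $\Mq(\fplus)(y_0) \leq 2^j$.
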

In the case of a space of homogeneous type $(X, d, \mu)$, this was
shown in \cite{maciassegovia} (Theorem 4.13) for a normal space of
order $\alpha$ and in \cite{uchiyama} (Theorem C) under the
assumption of the existence of a family of Lipschitz kernels (see
also the remarks following Theorem (4.5) in \cite{CW2}).  For the
manifold $M$ this will follow as a corollary of the atomic
decomposition for the Hardy-Sobolev space below.  We first prove the
inclusion
\begin{equation}
\label{Honeato-Honemax}
\Honeato(M) \subset \Honemax(M).
\end{equation}

\begin{proof} We show that  if $f\in \Honeato$ then $\fplus\in \Lone$.  Let $t>1$ and $a$
be an atom supported in a ball $B_0=B(x_0,r_0)$. We want to prove
that $a^+ \in \Lone$. First take $x\in 2B_0$. We have $a^+(x)=
\sup\limits_{\varphi \in \Tone(x)} \left|\int_{B}a \varphi
d\mu\right|\leq C \cM( a)(x)$. Then by the $L_t$-boundedness of the
Hardy-Littlewood maximal function for $t > 1$ (Theorem~\ref{MIT})
and the size condition on $a$,
\begin{align}
\int_{2B_0}|a^+(x)|d\mu & \leq \mu(B_0)^{1/t'}\left(\int_{2B_0}|a^+|^t d\mu\right)^{1/t}
\leq C\mu(B_0)^{1/t'}\|\cM a\|_t\nonumber \\
&\leq C_t\mu(B_0)^{1/t'}\|a\|_t
\leq C_t.
\label{doubleball}
\end{align}
Note that the constant depends on $t$ due to the dependence
of the constant in the boundedness of the Hardy-Littlewood maximal function, which blows up as $t \ra 1^+$.

Now if $x\in M\setminus 2B_0$, there exists $k\in \mathbb{N}^*$ such
that $x\in C_k(B_0):=2^{k+1}B_0\setminus 2^kB_0$. Let $\varphi \in
\Tone(x)$ and  take a ball $B=B(x,r)$ such that $\varphi$ is
supported in and satisfies (\ref{Tone}) with respect to $B$. Using
the moment condition for $a$ and the Lipschitz bound on $\varphi$,
we get
\begin{align*}
\left| \int_{B}a \varphi d\mu\right|&=\left|\int_{B\cap B_0}a(y)(\varphi(y)-\varphi(x_0))d\mu(y)\right|
\\
& \leq C \int_{B\cap B_0}|a(y)| \frac{d(y,x_0)}{r\mu(B)} d\mu(y)\\
& \leq C \frac{r_0}{r\mu(B)} \|a\|_1.
\end{align*}
Note that for the integral not to vanish we must have $B \cap B_0 \neq \emptyset$.  We claim that
this implies
\begin{equation}\label{BB1}
r>2^{k-1}r_0 \mbox{ and } 2^{k+1}B_0\subset 8B.
\end{equation}
To see this, let $y \in B\cap B_0$. Then $ r > d(y,x)\geq
d(x,x_0)-d(y,x_0) \geq 2^kr_0 - r_0 \geq 2^{k-1} r_0. $ Thus if
$d(z,x_0) < 2^{k+1}r_0$ then $d(z,x) \leq d(z,x_0) + d(x,x_0) <
2^{k+1}r_0 + 2^{k+1}r_0 < 8r$ and we deduce that $2^{k+1}B_0 \subset
8B$. We then have $$ \mu(2^{k+1}B_0)\leq C8^s\mu(B)$$ by
(\ref{teta}). Using this estimate and the fact that $\|a\|_{1}\leq
1$, we have
\begin{align*}
\int_{x\notin 2B_0}|a^+|(x)d\mu &= \sum_{k\geq 1}\int_{C_k(B_0)}|a^+|(x) d\mu
\\
&\leq C\|a\|_1 \sum_{k\geq 1} \frac{8^s 2^{1-k}}{\mu(2^{k+1}B_0)}\mu(C_k(B_0))
\\
&
\leq C8^s \sum_{k\geq 1} 2^{1-k}
\\
&\leq C.
\end{align*}
Thus $a^+ \in \Lone$  with $\|a^+\|_{1}\leq C_t$.

Now  for $f\in\Honeato$, take an atomic decomposition of $f$ as in
(\ref{Hone-atom-decomp}). By the convergence of the series in
$\Lone$, we have, for each $x$ and each $\varphi \in \Tone(x)$,
$$\left|\int f\varphi d\mu\right| \leq \sum |\lambda_j| \left|\int a_j \varphi d\mu\right| \leq \sum |\lambda_j| a_j^+(x)$$
so $\fplus$ is pointwise dominated by  $\sum |\lambda_j| a_j^+$, giving
$$\|\fplus\|_{1}\leq \sum_j|\lambda_j|\|a_j^+\|_1\leq C_t\sum_j|\lambda_j|.$$
Taking the infimum over all the atomic decompositions of $f$ yields
 $\|\fplus\|_{1}\leq C_t \|f\|_{\Hone}$.\end{proof}

The proof of the converse, namely that if $\fplus \in \Lone$  then
$f\in \Honeato$, relies on an atomic decomposition and will follow from
the proof of Proposition~\ref{AHS} below.

\subsection{Atomic Hardy-Sobolev spaces}
In \cite{babe}, the authors defined atomic Hardy-Sobolev spaces. Let
us recall their definition of homogeneous Hardy-Sobolev atoms.
These are similar to $\Hone$ atoms but instead of the usual $L_t$ size
condition they are bounded in the Sobolev space $\dot{W}^{1}_t$.
\begin{dfn}[\cite{babe}] \label{HHSA}
 For $1<t\leq \infty$, $\frac{1}{t} + \frac{1}{t'} = 1$,
 we say that a function $a$ is a homogeneous Hardy-Sobolev $(1,t)$-atom if
\begin{itemize}
\item[$1$.] $a$ is supported in a ball $B$,
\item[$2$.] $\|a\|_{\dot{W}^{1}_t}:=\|\grad a\|_{t}\leq \mu(B)^{-\frac{1}{t'}}$, and
\item[$3$.] $\int a d\mu=0$.
\end{itemize}
\end{dfn}
They then define, for every $1< t \leq \infty$, the homogeneous
Hardy-Sobolev space $\dotHStato$ as follows: $f\in
\dotHStato$ if there exists a sequence of
homogeneous Hardy-Sobolev $(1,t)$-atoms $\{a_{j}\}_{j}$ such that
\begin{equation}
\label{sum} f=\sum_{j}\lambda_{j}a_{j}
\end{equation}
with $\sum_{j}|\lambda_{j}|<\infty$. This space is equipped with the
semi-norm
$$
\|f\|_{\dotHStato}=\inf \sum_{j}|\lambda_{j}|,
$$
where the infimum is taken over all possible decompositions
(\ref{sum}).

\begin{rems}
\label{atom-rems}
\begin{enumerate}

\item  Since condition $2$ implies that the homogeneous Sobolev $\dotWone$
semi-norm of the atoms is bounded by a constant, the sum in
(\ref{sum}) converges in $\dotWone$ and therefore we can consider
$\dotHStato$ as its subspace.

\item Since we are working with homogeneous spaces, we
can modify functions by constants so the cancellation conditions are,
in a sense, irrelevant. As we will see below, and when comparing to
other definitions in the literature (see, for example, \cite{LY}),
condition $3$ can be replaced by one of the following:
\begin{itemize}
\item[$3^\prime$.] $\|a\|_1 \leq r(B)$, or
\item[$3^{\prime\prime}$.] $\|a\|_t \leq r(B) \mu(B)^{-\frac{1}{t'}}$,
\end{itemize}
where $r(B)$ is the radius of the ball $B$.  Clearly condition
$3^{\prime\prime}$ implies $3^\prime$, and conditions $2$ and $3$
imply $3^\prime$ (respectively $3^{\prime\prime}$) if we assume the
Poincar\'e inequality $(P_1)$ (respectively $(P_t)$).  It is most
common to consider the case $t=2$ under the assumption $(P_2)$.

\item As mentioned in the introduction, from Theorem~\ref{thm:comph}
we have that under $(P_1)$ all the spaces $\dotHStato$ can
be identified as one space $\dot{HS}^{1}_{ato}$.  As we will see, in
this case the atomic decomposition can be taken with condition $3^\prime$
instead of $3$.
\end{enumerate}
\end{rems}

\section{Atomic decomposition of $\dotMone$ and comparison with $\dotHStato$}
\label{atomic-homo} 
We begin by proving that under the Poincar\'e
inequality $(P_1)$,  $\dot{HS}_{ato}^1\subset \dotMone$.  While
under this assumption the space $\dot{HS}_{ato}^1$ is equivalent to
any one of the spaces $\dotHStato$ defined above, if we want to
consider the norms we need to fix some $t > 1$.

\begin{prop}
\label{comp1} Let $M$ be a complete Riemannian manifold satisfying
$(D)$ and $(P_1)$. Let  $1<t\leq \infty$ and $a$ be a homogeneous Hardy-Sobolev $(1,t)$-atom.
Then $a\in \dotMone$ with $\|a\|_{\dotMone}\leq C_t$, the constant
$C$ depending only on $t$, the doubling constant and the constant
appearing in $(P_1)$, and independent of $a$.

Consequently $\dotHStato\subset \dotMone$ with
$$
\|f\|_{\dotMone}\leq C_t\|f\|_{\dotHStato}.
$$
\end{prop}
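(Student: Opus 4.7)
The plan is to apply Theorem~\ref{MN1} and reduce the bound $\|a\|_{\dotMone} \leq C_t$ to the uniform estimate $\|Na\|_1 \leq C_t$, for $a$ a homogeneous Hardy-Sobolev $(1,t)$-atom supported in a ball $B_0 = B(x_0, r_0)$. Before estimating $Na$, one obtains the $\Lone$-size bound $\|a\|_1 \leq C r_0$ (condition $3^\prime$ of Remark~\ref{atom-rems}) by applying $(P_1)$ on $B_0$ together with $a_{B_0} = 0$ and H\"older's inequality against the $L_t$-size of $\grad a$.

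For the main estimate I would split $\int_M Na\,d\mu$ into contributions from $2B_0$ and its complement, paralleling the structure of the $\Hone$-atom computation carried out in the proof of (\ref{Honeato-Honemax}). On $2B_0$, the Poincar\'e inequality $(P_1)$ applied to any ball $B \ni x$ gives $\frac{1}{r(B)}\aver{B}|a - a_B|d\mu \leq C\aver{B}|\grad a|d\mu \leq C\cM(|\grad a|)(x)$, so $Na \leq C\cM(|\grad a|)$ pointwise; then H\"older's inequality, the $L_t$-boundedness of $\cM$ from Theorem~\ref{MIT} (using $t > 1$), and the size bound on $\grad a$ yield $\int_{2B_0} Na\,d\mu \leq C_t \mu(B_0)^{1/t'}\|\grad a\|_t \leq C_t$. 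On the complement, for $x \in C_k(B_0) := 2^{k+1}B_0 \setminus 2^k B_0$ with $k \geq 1$, only balls $B \ni x$ meeting $B_0$ contribute to $Na(x)$ because $\supp a \subset B_0$; for such balls the geometric observations recorded in (\ref{BB1}) together with doubling and the preliminary bound $\|a\|_1 \leq C r_0$ yield
\[
\frac{1}{r(B)}\aver{B}|a - a_B|\,d\mu \;\leq\; \frac{2\|a\|_1}{r(B)\mu(B)} \;\leq\; \frac{C\, 2^{-k}}{\mu(2^{k+1}B_0)},
\]
and summing a geometric series against $\mu(C_k(B_0))$ gives $\int_{M\setminus 2B_0} Na\,d\mu \leq C$.

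Combining the two regions proves $\|Na\|_1 \leq C_t$, and Theorem~\ref{MN1} concludes $\|a\|_{\dotMone} \leq C_t$. For the ``consequently'' part, given $f = \sum_j \lambda_j a_j \in \dotHStato$ I would associate to each $a_j$ a function $g_j \in \Lone$ realizing (\ref{Mp1}) with $\|g_j\|_1 \leq C_t$ (available from the first part), set $g := \sum_j |\lambda_j| g_j \in \Lone$ with norm at most $C_t \sum_j |\lambda_j|$, and pass the Haj{\l}asz inequality for the partial sums $S_n = \sum_{j\le n}\lambda_j a_j$ to the limit. The technical step requiring most care is precisely this limit: the atomic series converges only in $\dotWone$ by Remark~\ref{atom-rems}(1), so $f$ is a priori defined modulo constants, but because (\ref{Mp1}) involves only differences of $f$ this is harmless; extracting an a.e.-convergent subsequence of $\{S_n\}$ and invoking monotone convergence for the majorants $G_n := \sum_{j=1}^n|\lambda_j|g_j \nearrow g$ produces the pointwise inequality $|f(x)-f(y)| \leq \rho(x,y)(g(x)+g(y))$ almost everywhere, and taking the infimum over atomic decompositions yields the stated norm comparison.
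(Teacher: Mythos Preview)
Your argument for the atom estimate $\|Na\|_1 \leq C_t$ is essentially identical to the paper's proof: the same split into $2B_0$ and its complement, the same use of $(P_1)$ to dominate $Na$ pointwise by $C\cM(|\grad a|)$ on $2B_0$ (followed by H\"older and the $L_t$-boundedness of $\cM$), and the same use of the geometric facts (\ref{BB1}) together with the bound $\|a\|_1 \leq Cr_0$ on the annuli $C_k(B_0)$. The paper also derives $\|a\|_1 \leq Cr_0$ from the cancellation condition, $(P_1)$, and the size of $\grad a$, exactly as you do, and remarks (as you anticipate) that condition~$3'$ could have been used directly instead.

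The one place your proposal needs more than you have written is the limiting argument for the ``consequently'' part. You assert the existence of an a.e.-convergent subsequence of the partial sums $S_n$, but convergence in $\dotWone$ (that is, $\grad S_n \to \grad f$ in $\Lone$) does not by itself yield pointwise convergence of $S_n$, even modulo constants; you would need an extra step, for instance using $(P_1)$ on an exhausting sequence of balls to upgrade to $L_{1,{\rm loc}}$ convergence (after subtracting suitable constants) and then extracting a subsequence by a diagonal argument. The paper handles this passage differently and more cleanly: it observes that the partial sums are Cauchy in the Banach space $\dotMone$ (since $N$ is sublinear and $\sum_j|\lambda_j|\|Na_j\|_1 < \infty$), obtains a limit $g \in \dotMone$, and then invokes Proposition~\ref{nabN} (the inclusion $\dotMone \subset \dotWone$) to identify $g$ with $f$ in $\dotWone$. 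The estimate $\|Nf\|_1 \leq \sum_j|\lambda_j|\|Na_j\|_1 \leq C_t\sum_j|\lambda_j|$ then follows from sublinearity of $N$ and this convergence.
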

\begin{proof}
 Let $a$ be an $(1,t)$-atom supported in a ball $B_0=B(x_0,r_0)$.
We want to  prove that $Na \in \Lone$. For $x\in 2B_0$ we have,
using $(P_1)$,
$$Na(x)= \sup\limits_{B:\;x\in B} \frac{1}{r(B)}
\aver{B}|a-a_B|d\mu\leq C \sup\limits_{B:\;x\in B}\aver{B}|\grad a
|d\mu =C \cM(|\grad a|)(x).$$ Then, exactly as in
(\ref{doubleball}), by the  $L_t$ boundedness of $\cM$ for $t>1$
(with a constant depending on $t$), and properties $1$ and $2$ of
$(1,t)$-Hardy-Sobolev atoms,
$$
\int_{2B_0}|Na(x)|d\mu \leq
C\mu(B_0)^{1/t'}\left(\int_{2B_0}(\cM(|\grad a
|))^td\mu\right)^{1/t} \leq C_t\mu(B_0)^{1/t'}\|\grad a\|_t \leq
C_t.
$$

Now if $x\notin 2B_0$, then there exists $k\in \mathbb{N}^*$ such
that $x\in C_k(B_0):=2^{k+1}B_0\setminus 2^kB_0$. Let $B=B(y,r(B))$
be a ball containing $x$. Then
\begin{align}
\frac{1}{r(B)} \aver{B}|a-a_B|d\mu&=\frac{1}{r(B)}
\frac{1}{\mu(B)}\left(\int_{B\cap B_0}|a-a_B|d\mu+\int_{B\cap
B_0^c}|a_B|d\mu\right) \nonumber
\\
& \leq  \frac{3}{r(B)} \frac{1}{\mu(B)}\int_{B\cap B_0}|a|d\mu.
\label{mcu}
\end{align}
From (\ref{BB1}) we have that $B\cap B_0 \neq \emptyset$ implies
$r(B)>2^{k-1}r_0$ and $\mu(2^{k+1}B_0)\leq C8^s\mu(B)$. This,
together with the doubling and Poincar\'e assumptions $(D)$ and
$(P_1)$, the cancellation condition $3$ for $a$ and the size
condition $2$ for $\grad a$, yield
\begin{equation*}
Na(x) \leq
\frac{3}{2^{k-1}r_0}\frac{8^s}{\mu(2^{k+1}B_0)}\int_{B_0}|a|d\mu
\leq \frac{3}{2^{k-1}}\frac{8^s}{\mu(2^{k+1}B_0)}\int_{B_0}|\grad
a|d\mu \leq 3\frac{ 2^{-k+1} 8^s}{\mu(2^{k+1}B_0)}.
\end{equation*}
Note that at this point we could have used condition $3^\prime$ (see
Remarks~\ref{atom-rems}) instead of conditions $2,3,(D)$ and
$(P_1)$.

Therefore
\begin{align*}
\int_{x\notin 2B_0}|Na|(x)d\mu &= \sum_{k\geq 1}\int_{C_k(B_0)}|Na|(x) d\mu
\leq C 8^s \sum_{k\geq 1}
2^{-k+1}\int_{C_k(B_0)}\frac{1}{\mu(2^{k+1}B_0)}d\mu(x)
\\
&
\leq C8^s \sum_{k\geq 1} 2^{-k+1} = C_s.
\end{align*}
Thus $Na \in \Lone$  with $\|Na\|_{1}\leq C_{s,t}$.

Now if $f\in{\dotHStato}$, take an atomic decomposition of $f$: $f=
\sum_j\lambda_ja_j$ with $a_j$ $(1,t)$-atoms and $\sum_j|\lambda_j|
< \infty$.  Then the sum $\sum_j \lambda_j Na_j$ converges
absolutely in $\Lone$ so by Theorem~\ref{MN1} the sequence of
functions $f_k = \sum_{j=1}^k\lambda_ja_j$ has a limit, $g$, in the
Banach space $\dotMone$.  By Proposition~\ref{nabN}, this implies
convergence in $\dotWone$.  Since (as pointed out in
Remarks~\ref{atom-rems}) the convergence of the decomposition $f=
\sum_j\lambda_ja_j$ also takes place in $\dotWone$, we get that $f =
g$ in $\dotWone$.  This allows us to consider $f$ as a (locally
integrable) element of $\dotMone$, take $Nf$ and estimate
$$\|Nf\|_{1}\leq
\sum_j|\lambda_j|\|Na_j\|_1\leq C_t\sum_j|\lambda_j|.$$
Taking the
infimum over all the atomic decompositions of $f$ yields
 $\|Nf\|_{1}\leq C_t\|f\|_{\dotHStato}$.
\end{proof}

\begin{rem}\label{remcomp1}
As pointed out in the proof, Proposition~\ref{comp1} remains valid
if we take, for the definition of a $(1,t)$-atom, instead of
condition $3$ of Definition~\ref{HHSA}, condition $3^\prime$ or
$3^{\prime\prime}$ of Remarks~\ref{atom-rems}.
\end{rem}

Now for the converse, that is, to prove that $\dotMone\subset \dotHStato$,  we establish  an atomic decomposition  for functions $f\in \dotMone$. To attain this goal, we need a Calder\'on-Zygmund decomposition for such functions. We refer to \cite{AC} for the original proof of the Calder\'on-Zygmund decomposition for Sobolev spaces on Riemannian manifolds.
\begin{prop}[Calder\'on-Zygmund decomposition] \label{CZ}
Let $M$ be a complete Riemannian manifold satisfying $(D)$.
Let $f\in \dotMone$, $\frac{s}{s+1}< q<1$ and $\alpha >0$.
Then one can find a collection of balls $\{B_{i}\}_{i}$,
functions $b_{i}\in \Wone$ and a Lipschitz function $g$
such that the following properties hold:
\begin{equation*}
f = g+\sum_{i}b_{i},
\end{equation*}
\begin{equation}
|\grad g(x)|\leq C\alpha\quad \mbox{for}\; \mu-a.e.\; x\in M, \label{eg}
\end{equation}
\begin{equation*}
\supp b_{i}\subset B_{i}, \,\|b_{i}\|_1\leq C\alpha \mu(B_i)r_i,\,
\|\grad b_{i}\|_{q}\leq C\alpha\mu(B_{i})^{\frac{1}{q}},\label{eb}
\end{equation*}
\begin{equation}
\sum_{i}\mu(B_{i})\leq \frac{C}{\alpha}\int N f d\mu,
\label{eB}
\end{equation}
and
\begin{equation}
\sum_{i}\chi_{B_{i}}\leq K \label{rb}.
\end{equation}
The constants $C$ and $K$ only depend on the constant in $(D)$.
\end{prop}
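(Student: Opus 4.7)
The plan is to run a Calder\'on--Zygmund decomposition at level $\alpha$ on the function $Nf$, which lies in $\Lone$ by Theorem~\ref{MN1}, and then to recover the required properties of $g$ and the $b_i$'s by feeding the Poincar\'e--Sobolev estimate (\ref{PNq}) into the standard Whitney machinery. First I would set
\[
\Omega = \{x\in M : \cM(Nf)(x) > \alpha\}.
\]
The weak $(1,1)$ bound for $\cM$ from Theorem~\ref{MIT} immediately gives $\mu(\Omega)\leq C\alpha^{-1}\|Nf\|_1$. If $\mu(\Omega)=0$, then $Nf\leq\alpha$ a.e.\ and inequality (\ref{MN}) identifies $f$ a.e.\ with a $C\alpha$-Lipschitz function $g$; Proposition~\ref{nabN} then gives $|\grad g|\leq C\alpha$ a.e., so we may take $g=f$ with no $b_i$'s. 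Otherwise $\Omega$ is a proper open set of finite measure. Apply the Whitney-type covering available on any doubling metric space to produce balls $B_i=B(x_i,r_i)$ with $\Omega=\bigcup_i B_i$, bounded overlap $\sum_i\chi_{B_i}\leq K$ (yielding (\ref{rb})), and the Whitney property $r_i\sim\dist(B_i,\Omega^c)$, which furnishes a point $y_i\in 4B_i\setminus\Omega$. The overlap bound then gives $\sum_i\mu(B_i)\leq K\mu(\Omega)\leq CK\alpha^{-1}\|Nf\|_1$, which is (\ref{eB}).

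Next, attach to $\{B_i\}$ a Lipschitz partition of unity $\{\varphi_i\}$ with $\supp\varphi_i\subset B_i$, $0\leq\varphi_i\leq 1$, $\|\grad\varphi_i\|_\infty\leq C/r_i$, and $\sum_i\varphi_i=\chi_\Omega$, and define
\[
b_i=(f-f_{B_i})\varphi_i,\qquad g = f-\sum_i b_i,
\]
so that $g=f$ on $\Omega^c$ and $g=\sum_i f_{B_i}\varphi_i$ on $\Omega$. The estimates on the $b_i$'s all rest on one observation: since $y_i\in 4B_i\setminus\Omega$ satisfies $\cM(Nf)(y_i)\leq\alpha$, one has $\aver{\lambda B_i}Nf\,d\mu\leq C\alpha$ for any fixed dilation $\lambda\geq 5$, and since $\frac{s}{s+1}<q<1$ the concavity of $t\mapsto t^q$ with Jensen's inequality upgrades this to $(\aver{\lambda B_i}(Nf)^q\,d\mu)^{1/q}\leq C\alpha$. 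Feeding this into (\ref{PNq}) on $B_i$ gives $\|f-f_{B_i}\|_{L_{\qstar}(B_i)}\leq C\alpha r_i\mu(B_i)^{1/\qstar}$, and H\"older yields $\|f-f_{B_i}\|_{L_1(B_i)}\leq C\alpha r_i\mu(B_i)$ (the $\Lone$ bound on $b_i$) and $\|f-f_{B_i}\|_{L_q(B_i)}\leq C\alpha r_i\mu(B_i)^{1/q}$. Writing $\grad b_i=(f-f_{B_i})\grad\varphi_i+\varphi_i\grad f$, the first piece is bounded in $L_q$ by $(C/r_i)\|f-f_{B_i}\|_{L_q(B_i)}\leq C\alpha\mu(B_i)^{1/q}$; for the second, Proposition~\ref{nabN} gives $|\grad f|\leq CNf$ a.e., and the same $L_q$ bound on $Nf$ over $5B_i$ gives $\|\varphi_i\grad f\|_q\leq C\alpha\mu(B_i)^{1/q}$.

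The hard part is (\ref{eg}), the pointwise bound on $\grad g$. My strategy is to prove a global Lipschitz estimate $|g(x)-g(y)|\leq C\alpha\rho(x,y)$ for $\mu$-a.e.\ $x,y\in M$; Rademacher's theorem (or a second application of Proposition~\ref{nabN} to $g$ itself) will then deliver $|\grad g|\leq C\alpha$ a.e. The argument splits into three cases. For $x,y\in\Omega^c$ that are Lebesgue points of $Nf$, $Nf(x),Nf(y)\leq\alpha$ and the claim is (\ref{MN}) applied to $g=f$. For $x\in\Omega$ and $y\in\Omega^c$, write $g(x)-f(y) = \sum_i\varphi_i(x)(f_{B_i}-f(y))$ and for each contributing index (so $x\in B_i$) split $f_{B_i}-f(y)=(f_{B_i}-f(y_i))+(f(y_i)-f(y))$; bound the first by averaging (\ref{MN}) over $z\in B_i$ together with $Nf(y_i)\leq C\alpha$, and the second by (\ref{MN}) at the two good points $y_i,y$. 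The Whitney property collapses both $r_i$ and $\rho(y_i,y)$ onto $\rho(x,y)$, and bounded overlap finishes the estimate. For $x,y\in\Omega$ one uses the cancellation $\sum_i(\varphi_i(x)-\varphi_i(y))=0$ to rewrite $g(x)-g(y)=\sum_i(\varphi_i(x)-\varphi_i(y))(f_{B_i}-c)$ for a fixed reference constant $c=f_{B_{i_0}}$ with $x\in B_{i_0}$; a chaining argument between the $O(K)$ Whitney balls touching $x$ or $y$, combined with (\ref{PNq}) to compare averages of $f$ over neighbouring balls and the Lipschitz bound $|\varphi_i(x)-\varphi_i(y)|\leq C\rho(x,y)/r_i$, yields the desired bound. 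This last case, balancing the variation of the partition of unity against the oscillation of the averages $f_{B_i}$ between adjacent Whitney balls, is the technical heart of the argument and closely follows the Euclidean pattern of \cite{AC}.
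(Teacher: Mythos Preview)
Your overall strategy matches the paper's: define a bad set $\Omega$ via a maximal function of $Nf$, take a Whitney cover with a subordinate Lipschitz partition of unity, set $b_i = (f - c_i)\chi_i$ and $g = f - \sum_i b_i$, and then feed the Sobolev--Poincar\'e inequality (\ref{PNq}) together with Proposition~\ref{nabN} into the estimates. Two differences are worth noting. A minor one: the paper uses $\Omega = \{\Mq(Nf) > \alpha\}$ rather than your $\{\cM(Nf) > \alpha\}$, and bounds $\mu(\Omega)$ via the strong $L_{1/q}$-boundedness of $\cM$. Your choice works equally well for this proposition (the Jensen step recovers the $L_q$-averages needed for (\ref{PNq})), though the $\Mq$-version is what is reused in the atomic decomposition that follows. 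The paper also takes $c_i$ to be the $\chi_i$-weighted average rather than $f_{B_i}$; this is inconsequential here.

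The substantive difference is in the proof of $|\grad g|\leq C\alpha$. The paper does \emph{not} prove a global Lipschitz estimate. It computes directly
\[
\grad g \;=\; \ind_F\,\grad f \;-\; \sum_i (f - c_i)\,\grad\chi_i \;=:\; \ind_F\,\grad f - h,
\]
bounds the first term by $Nf\leq\alpha$ on $F$ via Proposition~\ref{nabN}, and bounds $|h(x)|$ pointwise: fixing a Whitney ball $B_k\ni x$ and using $\sum_i\grad\chi_i(x)=0$ to insert the constant $f_{7B_k}$, one gets $|c_i - f_{7B_k}|\lesssim r_k\alpha$ for each of the at most $K$ indices $i\in I_x$ from (\ref{PNq}), hence $|h(x)|\lesssim K\alpha$. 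This is entirely local at $x$ and avoids any case analysis.

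Your Lipschitz route is viable, but the $\Omega$--$\Omega$ case as written has a gap. The phrase ``chaining between the $O(K)$ Whitney balls touching $x$ or $y$'' is only correct when $\rho(x,y)\lesssim r_{i_0}$, so that the Whitney balls at $y$ are genuine neighbours of $B_{i_0}$ with comparable radii. When $\rho(x,y)\gg r_{i_0}$, for an index $i$ with $y\in B_i$ and $x\notin B_i$ the factor $|\varphi_i(x)-\varphi_i(y)|$ gives only the trivial bound $1$, while $|f_{B_i}-f_{B_{i_0}}|$ is not controlled by $Cr_{i_0}\alpha$ since there is no short chain of Whitney balls connecting $B_i$ to $B_{i_0}$. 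The standard fix is a further split: if $\rho(x,y)\geq r_{i_0}$, pick $z\in\Omega^c$ with $\rho(x,z)\lesssim r_{i_0}\leq\rho(x,y)$ and write $|g(x)-g(y)|\leq |g(x)-g(z)|+|g(z)-g(y)|$, reducing to two instances of the mixed case you already handled (and noting $\rho(z,y)\leq\rho(x,z)+\rho(x,y)\lesssim\rho(x,y)$). You should make this explicit; as stated, the argument does not close. The paper's direct computation of $\grad g$ sidesteps this entirely and is the cleaner choice.
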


\begin{proof}
Let $f\in \dotMone$,  $\frac{s}{s+1}< q<1$ and $\alpha >0$. Consider the open set
$$
\Omega=\{x:\, \Mq(Nf)(x)>\alpha\}.
$$
If $\Omega=\emptyset$, then set
$$
 g=f\;,\quad b_{i}=0 \, \text{ for all } i
$$
so that (\ref{eg}) is satisfied according to the Lebesgue differentiation theorem. Otherwise
\begin{align}\label{mO}
\mu(\Omega)&\leq \frac{C}{\alpha } \int_{M} \Mq(Nf)d\mu \nonumber
\\
&\leq   \frac{C}{\alpha} \int_{M} \left(\cM(Nf)^q\right)^{1/q}d\mu
\nonumber
\\
&\leq  \frac{C}{\alpha } \int_{M} Nf d\mu<  \infty.
\end{align}
We used the fact the $\cM$ is $L_{1/q}$ bounded since $1/q>1$ and
Theorem~\ref{MN1}.
 In particular $\Omega\neq M$ as $\mu(M)=+\infty$. 
 
 Let $F$ be the complement of
 $\Omega$. Since $\Omega$ is an open set distinct from $M$, let
$\{\underline{B_{i}}\}_i$ be a Whitney decomposition of $\Omega$
(see \cite{CW2}). That is, the $\underline{B_{i}}$ are pairwise
disjoint, and there exist two constants $C_{2}>C_{1}>1$, depending
only on the metric, such that
\begin{itemize}
\item[1.] $\Omega=\cup_{i}B_{i}$ with $B_{i}=
C_{1}\underline{B_{i}}$, and the balls $B_{i}$ have the bounded overlap property;
\item[2.] $r_{i}=r(B_{i})=\frac{1}{2}d(x_{i},F)$ and $x_{i}$ is
the center of $B_{i}$;
\item[3.] each ball $\Bibar=C_{2}B_{i}$ intersects $F$ ($C_{2}=4C_{1}$ works).
\end{itemize}
For $x\in \Omega$, denote $I_{x}=\left\lbrace i:x\in B_{i}\right\rbrace$.
By the bounded overlap property of the balls $B_{i}$, we have that
$\sharp I_{x} \leq K$, and moreover, fixing $k\in I_{x}$,  $\frac{1}{r_i}\leq r_k\leq 3r_i$ and $B_{i}\subset 7B_{k}$ for all $i\in I_{x}$.

Condition (\ref{rb}) is nothing but the bounded overlap property of the $B_{i}$'s  and (\ref{eB}) follows from (\ref{rb}) and  (\ref{mO}).
Note also that using the doubling property, we have
\begin{equation}\label{faze}
\int_{B_{i}} |Nf|^{q}d\mu \leq C \mu(B_i) \aver{\Bibar} |Nf|^{q}d\mu \leq  \mu(B_i) \cM_q^q(Nf)(y) \leq C \alpha^ {q}\mu(B_{i})
\end{equation}
for some $y \in \Bibar\cap F$, whose existence is guaranteed by property $3$ of the Whitney
decomposition.

Let us now define the functions $b_{i}$. For this, we construct a
partition of unity $\{\chi_{i}\}_{i}$ of $\Omega$ subordinate to the
covering $\{B_{i}\}_i$. Each  $\chi_{i}$ is a Lipschitz function
supported in $B_{i}$ with $0 \leq \chi_i \leq 1$ and
$\displaystyle\|\grad \chi_{i} \|_{\infty}\leq \frac{C}{r_{i}}$ (see
for example \cite{FHK}, p.~1908).

We set $b_{i}=(f-c_i)\chi_{i}$ where
$c_i:=\frac{1}{\chi_{i}(B_{i})}\int_{B_{i}}f\chi_{i} d\mu$ and
$\chi_{i}(B_{i})$ means $\int_{B_{i}}\chi_{i}d\mu$, which is
comparable to $\mu(B_i)$. Note that by the properties of the
$\chi_i$ we have the trivial estimate
\begin{equation}
\label{bi-trivial} \|b_i\|_1 \leq \int_{B_{i}}|f-c_i|d\mu \leq
\int_{B_{i}} |f|d\mu +
\frac{\mu(B_i)}{\chi_{i}(B_{i})}\int_{B_{i}}|f| d\mu \leq C\int
\ind_{B_{i}} |f|d\mu,
\end{equation}
but we need a better estimate, as follows:
\begin{align}\label{cij1}
\|b_i\|_1
&\leq \frac{1}{\chi_{i}(B_{i})}\int_{B_{i}}\left|\int_{B_{i}}(f(x)-f(y))\chi_{i}(y)d\mu(y)\right|d\mu(x)\nonumber
\\
&\leq \frac{1}{\chi_{i}(B_{i})}\int_{B_{i}}\int_{B_{i}}|f(x)-f(y)|d\mu(y)d\mu(x)\nonumber
\\
&\leq 2\frac{\mu(B_i)}{\chi_{i}(B_{i})}\int_{B_{i}}|f(x)-f_{B_i}|d\mu(x)\nonumber
\\
&\leq C
r_i\left(\int_{\Bibar}|Nf|^qd\mu\right)^{1/q}\mu(B_i)\nonumber
\\
&\leq  C r_i\Mq(Nf)(y) \mu(B_i)\nonumber
\\
&\leq Cr_i \alpha  \mu(B_i),
\end{align}
as in (\ref{faze}). Here we have used the Sobolev-Poincar\'e
inequality (\ref{PNq}) with $\lambda = 4$ and the fact that $\qstar
> 1$.

Together with the estimate on $\|b_i\|_1$, we use the fact that $|\grad f|$ is
in $\Lone$ (see  Proposition~\ref{nabN}) to bound  $\|\grad b_i\|_1$ and conclude that $b_i \in \Wone$:
\begin{align}
\|\grad b_i\|_1&
\leq  \int_{B_i}|f-c_i| |\grad \chi_i|d\mu
+\int_{B_i}|\grad f|d\mu \nonumber
\\
&
\leq C \frac{1}{r_i}r_i \mu(B_i)
\left(\aver{4B_i}|Nf|^qd\mu\right)^{1/q}+\int_{B_i}|\grad f|d\mu
\nonumber
\\
&\leq C\alpha \mu(B_i)+ \int_{B_i}|\grad f|d\mu
< \infty.
\label{gradbi1}
\end{align}

Similarly, we can estimate $b_i$ in the Sobolev space
$\dot{W}^{1}_q$; note again  that  by Proposition~\ref{nabN}, $|\grad f|$ is
in $\Lone$ and can be bounded pointwise $\mu$-a.e.\ by $Nf$:
\begin{align}
\|\grad b_i\|_q &\leq  \|\,|(f-c_{i})\grad \chi_i|\, \|_q+\|\,|\grad f| \chi_i \|_q
\nonumber \\
&\leq \frac{\mu(B_i)^{\frac{1}{q}-1}}{\chi_{i}(B_{i})}\int_{B_{i}}\int_{B_{i}}|f(x)-f(y)| \chi_{i}(y)|\grad\chi_{i}(x)|d\mu(y)d\mu(x)+\left(\int_{B_i}|\grad f|^qd\mu\right)^{1/q}
\nonumber \\
&\leq
C\left(\aver{\Bibar}|Nf|^qd\mu\right)^{1/q}\mu(B_i)^{1/q}+\left(\int_{\Bibar}|N
f|^qd\mu\right)^{1/q}
\nonumber \\
&\leq C \alpha \mu(B_i)^{1/q}
\label{gradbiq}
\end{align}
by (\ref{faze}).

Set now $g=f-\sum_ib_i$. Since the sum is locally finite on $\Omega$,
$g$ is defined  almost everywhere on $M$ and $g=f$ on $F$. Observe that
$g$ is a locally integrable function on $M$. Indeed, let $\varphi\in L_{\infty}$
with compact support. Since $d(x,F)\geq r_{i}$ for $x \in \supp \,b_{i}$, we obtain
\begin{equation*} \int\sum_{i}|b_{i}|\,|\varphi|\,d\mu \leq
\Bigl(\int\sum_{i}\frac{|b_{i}|}{r_{i}}\,d\mu\Bigr)\,\sup_{x\in
M}\Bigl(d(x,F)|\varphi(x)|\Bigr)\quad
\end{equation*}
Hence by (\ref{cij1}) and the bounded overlap property,
$$\int\sum_{i}|b_{i}||\varphi|d\mu
\leq C\alpha\sum_{i}\mu(B_i) \sup_{x\in
M}\Bigl(d(x,F)|\varphi(x)|\Bigr) \leq CK\alpha\mu(\Omega) \sup_{x\in
M} \Bigl(d(x,F)|\varphi(x)|\Bigr). $$
Since $f\in L_ {1,loc}$, we
conclude that $g\in \Loneloc$.

It remains to prove (\ref{eg}).
Indeed, using the fact that on $\Omega$ we have $\sum \chi_i = 1$ and $\sum \grad \chi = 0$, we get
\begin{align}
\grad g &= \grad f -\sum_{i}\grad b_{i}\nonumber
\\
&=\grad f-(\sum_{i}\chi_{i})\grad f -\sum_{i}(f-c_{i})\grad
\chi_{i}\nonumber
\\
&=\ind_{F}\grad f -\sum_{i}(f-c_{i})\grad
\chi_{i}.
\label{gradg}
\end{align}

From Proposition~\ref{nabN}, the definition of $F$ and the Lebesgue differentiation theorem, we have that $\ind_{F} |\grad f|\leq \ind_{F} Nf \leq \alpha$, $\mu -$a.e. We claim that a similar estimate holds for
$$h=\sum_{i}(f-c_{i})\grad
\chi_{i},$$
i.e.\ $|h(x)|\leq C\alpha$ for all $x\in M$. For this, note first that by the properties of the balls $B_i$ and
the partition of unity, $h$ vanishes on $F$ and the sum defining $h$ is locally finite on $\Omega$.
Then fix  $x\in \Omega$ and let $B_{k}$  be some Whitney ball containing $x$.  Again using the fact that $\displaystyle \sum_{i}\grad\chi_{i}(x)=0$, we can replace $f(x)$ by any constant in the sum above, so we can write
$$h(x)=\sum_{i\in I_x} \left(\aver{7B_{k}}f d\mu -c_{i} \right)\grad
\chi_{i}(x).$$
For all $i,k\in I_{x}$, by the construction of  the Whitney collection, the balls $B_i$ and $B_k$ have equivalent radii and  $B_i \subset 7B_k$.
Thus
\begin{align}
\left|c_{i}-\aver{7B_{k}}f d\mu \right| & \leq \frac{1}{\chi_{i}(B_{i})} \int_{B_{i}} \left|f -\aver{7B_{k}}fd\mu\right| \chi_i  d\mu  \nonumber \\
 & \lesssim  \aver{7B_{k}} |f-f_{7B_{k}} |  d\mu \nonumber \\
 & \lesssim r_k\left(\aver{7\lambda B_k} |N f|^q d\mu \right)^{1/q} \nonumber \\
 & \lesssim \alpha r_k. \label{relim}
 \end{align}
We used $(D)$, (\ref{PNq}) , $\chi_i(B_i)\simeq \mu(B_i)$ and (\ref{faze})  for $7B_k$. Hence
\begin{align}
\label{h}
|h(x)| \lesssim \sum_{i\in I_{x}} \alpha r_k( r_{i})^{-1} \leq CK\alpha .
\end{align}
\end{proof}

\begin{prop} \label{AHS} Let $M$ be a complete Riemannian manifold satisfying $(D)$.
Let $f\in \dotMone$. Then for all $\frac{s}{s+1}< q<1$,
$\qstar=\frac{sq}{s-q}$, there is a sequence of  homogeneous
$(1,\qstar)$ Hardy-Sobolev  atoms $\{a_j\}_j$, and a sequence of scalars
$\{\lambda_j\}_j$, such that 
$$f=\sum_j \lambda_j a_j \quad \mbox{ in } \dotWone,
\mbox{ and } \quad \sum |\lambda_j| \leq C_q \|f\|_{\dotMone}.$$
Consequently, $\dotMone\subset \dot{HS}^1_{\qstar, ato}$ with $\|f\|_{\dot{HS}_{\qstar,{\rm ato}}^1}\leq C_q \|f\|_{\dotMone}$.
\end{prop}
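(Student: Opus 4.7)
The plan is to apply Proposition~\ref{CZ} at every dyadic height $\alpha=2^j$, $j\in\mathbb{Z}$, obtaining $f=g^j+\sum_i b^j_i$ with bad set $\Omega^j=\{\Mq(Nf)>2^j\}$, Whitney balls $\{B^j_i\}_i$ of radii $r^j_i$, and a subordinate Lipschitz partition of unity $\{\chi^j_i\}_i$. Since $\Omega^{j+1}\subset\Omega^j$, on $F^j=M\setminus\Omega^j$ both $g^j$ and $g^{j+1}$ agree with $f$, so $g^{j+1}-g^j$ vanishes on $F^j$ and can be localised by the level-$j$ partition of unity as
\begin{equation*}
g^{j+1}-g^j=\sum_i\chi^j_i\,(g^{j+1}-g^j).
\end{equation*}
Setting $\lambda^j_i:=C\,2^j\mu(B^j_i)$ and $a^j_i:=\chi^j_i(g^{j+1}-g^j)/\lambda^j_i$, the goal is to exhibit the atomic decomposition $f=\sum_{j,i}\lambda^j_i a^j_i$ in $\dotWone$, with each $a^j_i$ a $(1,\qstar)$-atom in the sense of Definition~\ref{HHSA} (with the cancellation condition replaced, as in Remarks~\ref{atom-rems}, by the $\Lone$-size condition $3'$).

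First I would establish the telescoping $f=\sum_{j\in\mathbb{Z}}(g^{j+1}-g^j)$ in $\dotWone$. From the representation $\grad g^j=\ind_{F^j}\grad f-h^j$ with $|h^j|\leq C\,2^j$ extracted from (\ref{gradg}), one gets $\|\grad g^j-\grad f\|_1\leq\int_{\Omega^j}|\grad f|d\mu+C\,2^j\mu(\Omega^j)$ and, symmetrically, $\|\grad g^j\|_1\leq\int_{F^j}|\grad f|d\mu+C\,2^j\mu(\Omega^j)$. The decisive input is the summability
\begin{equation*}
\sum_{j\in\mathbb{Z}}2^j\mu(\Omega^j)\leq C\|\Mq(Nf)\|_1\leq C\|Nf\|_1\leq C\|f\|_{\dotMone},
\end{equation*}
obtained via layer-cake, using that $\Mq$ is $\Lone$-bounded for $q<1$ and Theorem~\ref{MN1}. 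This forces $2^j\mu(\Omega^j)\to 0$ at both ends of the dyadic range, from which $\grad g^j\to\grad f$ in $\Lone$ as $j\to+\infty$ (combining with $\mu(\Omega^j)\to 0$ and dominated convergence) and $\grad g^j\to 0$ in $\Lone$ as $j\to-\infty$ (using that $|\grad f|\leq C Nf$ vanishes on $\bigcap_j F^j\subset\{Nf=0\}$).

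Next I would verify the atom estimates. Because $g^j$ and $g^{j+1}$ are globally Lipschitz with constants $\leq C\,2^j$ and $\leq C\,2^{j+1}$ respectively, and $g^{j+1}-g^j$ vanishes on $F^j$, the Whitney property $\dist(x,F^j)\leq C r^j_i$ for $x\in B^j_i$ yields the pointwise bound $|g^{j+1}-g^j|\leq C\,2^j r^j_i$ on $B^j_i$. Combined with $|\grad\chi^j_i|\leq C/r^j_i$, this gives $|\grad[\chi^j_i(g^{j+1}-g^j)]|\leq C\,2^j$ pointwise, hence $\|\grad a^j_i\|_{\qstar}\leq C\,2^j\mu(B^j_i)^{1/\qstar}/\lambda^j_i\leq\mu(B^j_i)^{-1/(\qstar)'}$, while the same pointwise estimate yields $\|a^j_i\|_1\leq r(B^j_i)$ (condition $3'$). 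By the bounded overlap of $\{B^j_i\}_i$ and the summability above,
\begin{equation*}
\sum_{j,i}|\lambda^j_i|\leq CK\sum_j 2^j\mu(\Omega^j)\leq C\|f\|_{\dotMone}.
\end{equation*}

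The main technical obstacle is the dyadic limit $j\to-\infty$, where $\mu(\Omega^j)$ blows up while $2^j$ vanishes: the convergence $\grad g^j\to 0$ in $\Lone$ rests precisely on the summability of $2^j\mu(\Omega^j)$, which itself depends on the $\Lone$-boundedness of $\Mq$ available only in the sub-unit regime $q<1$ (the same regime in which $\qstar$ is defined and exceeds $1$). Once this convergence is secured, the remaining verifications---support, gradient size in $L_{\qstar}$, and the $\Lone$-size condition $3'$---are routine consequences of the Lipschitz and Whitney structure provided by Proposition~\ref{CZ}.
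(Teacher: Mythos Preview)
Your proof is correct and follows the same overall architecture as the paper---telescope $f=\sum_j(g^{j+1}-g^j)$ in $\dotWone$, then localise each layer by the Whitney partition $\{\chi^j_k\}$---but it diverges from the paper's argument at two points, both simplifications.

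First, for the key estimate on $\grad(\chi^j_k\ell^j)$ with $\ell^j=g^{j+1}-g^j$, you exploit directly that $\ell^j$ is globally $C2^j$-Lipschitz and vanishes on $F^j$: the Whitney distance bound $\dist(x,F^j)\lesssim r^j_k$ for $x\in B^j_k$ then yields the pointwise estimate $|\ell^j|\lesssim 2^jr^j_k$ on $B^j_k$, from which $|\grad(\chi^j_k\ell^j)|\lesssim 2^j$ follows immediately. This is more elementary than the paper, which instead expands $\ell^j$ explicitly in terms of the $b^j_i$ and $b^{j+1}_l$ and invokes the Sobolev--Poincar\'e inequality~(\ref{PNq}) to control $\|\ell^j\grad\chi^j_k\|_{\qstar}$ (see~(\ref{gradelljk})--(\ref{ljgradchinorm})). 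Your argument in fact gives $L_\infty$ control, so the resulting pre-atoms are of type $(1,\infty)$, stronger than the $(1,\qstar)$ stated.

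Second, you settle for atoms satisfying the size condition~$3'$ of Remarks~\ref{atom-rems} rather than the vanishing moment condition~$3$ of Definition~\ref{HHSA}. The paper explicitly acknowledges (just after~(\ref{ljgradchinorm}) and in Remark~\ref{rem-otheratoms}) that this variant suffices; however, to obtain atoms with $\int a\,d\mu=0$ as the Proposition literally asserts, the paper replaces $\chi^j_k\ell^j$ by the corrected functions $\ell^j_k$ of~(\ref{def:elljk}), introducing the constants $c_{k,l}$ to restore cancellation and then re-estimating each term of $\grad\ell^j_k$ separately. Your route avoids this extra layer of bookkeeping at the cost of delivering a formally weaker conclusion (atoms in the sense of~$3'$ rather than~$3$).
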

\begin{rem} Note that for the inclusion $\dotMone\subset \dot{HS}^1_{\qstar, ato}$, we do not need to assume any additional hypothesis, such as a Poincar\'e inequality, on the doubling manifold.
\end{rem}
\begin{proof}[Proof of Proposition \ref{AHS}]
Let $f\in \dotMone$. We follow the general scheme of the atomic
decomposition for Hardy spaces, found in \cite{Stein}, Section
III.2.3.  For every $j\in \mathbb{Z}^*$, we take the
Calder\'on-Zygmund decomposition, Proposition~\ref{CZ}, for $f$ with
$\alpha= 2^j$. Then
$$ f=g^j+ \sum_i b_i^j$$
with $b_i^j$, $g^j$ satisfying the properties of Proposition \ref{CZ}.

We want to write
\begin{equation}
\label{convgj} f = \sum_{-\infty}^\infty (g^{j+1} - g^j)
\end{equation}
in $\dot{W}_1^1$.
First let us see that  $g^j \rightarrow f$ in as $j \ra \infty$.
Indeed, since the sum is locally finite we can write
$$\|\grad (g^j-f)\|_1=\|\grad( \sum_i b_i^j)\|_1
\leq \sum_i\|\grad b_i^j\|_1.$$
By (\ref{gradbi1}),
\begin{align}
\sum_i\|\grad b_i^j\|_1&
\leq CK2^j\mu(\Omega_j)+ K\int_{\Omega_j}|\grad f|d\mu \nonumber
\\
&=I_j+II_j.
\label{gradbij}
\end{align}
When $j\rightarrow \infty$, $I_j \rightarrow 0$ since
$\sum_j2^j\mu(\Omega_j)\approx\int\Mq(Nf)d\mu<\infty$. This also
implies $\Mq(Nf)$ is finite $\mu$-a.e., hence  $\bigcap \Omega_j =
\emptyset$ so $II_j \rightarrow 0$, since $|\grad f| \in \Lone$.

When $j \ra -\infty$, we want to show $\|\grad g_j\|_1 \ra 0$.  Breaking $\grad g$
up as in (\ref{gradg}), we know that
\begin{equation}
\label{gradgj}
\int_{F^j} |\grad g^j| = \int \ind_{F^j} |\grad f|  \leq \int_{\{Nf \leq 2^j\}} Nf  \ra  0,
\end{equation}
since $Nf \in \Lone$.  For the other part we have, by (\ref{h}),
\begin{equation}
\label{gradgj1}
\int_{\Omega^j} |\grad g^j| = \int |h(x)| \leq CK2^j \mu(\Omega^j) \ra 0
\end{equation}
from the convergence of $\sum 2^j \mu(\Omega^j)$, as above.

Denoting $g^{j+1}-g^j$ by $\ell^j$, we have $\supp \ell^j\subset \Omega_j$
so using the partition
of unity $\{\chi^j_k\}$ corresponding to the Whitney decomposition for $\Omega_j$,
we can write $f=\sum_{j,\,k}\ell^j\chi_k^j$ in $\dotWone$. Let us compute $\|\ell^j\chi_k^j\|_{\dot{W}_{\qstar}^1}$.
We have
$$
\grad (\ell^j\chi_k^j)=(\grad \ell^{j})\chi_k^j+\ell^j\grad \chi_k^j.
$$
From the estimate  $\|\grad g^j\|_{\infty}\leq C2^j$  it follows that
$\left(\avert{\Bjk}|\grad \ell^j|^{\qstar}d\mu\right)^{1/\qstar}\leq C2^j$, while
\begin{equation}\label{ljgradchi}
\ell^j\grad \chi_k^j= \left(\sum_{i: \Bjk\cap \Bij\neq \emptyset}
(f-\cij)\chi_{i}^j-\sum_{l: \Bjk\cap B_l^{j+1} \neq \emptyset}
(f-c_l^{j+1})\chi_l^{j+1}\right)\grad \chi_k^j.
\end{equation}
Observe  that since $ \Omega_{j+1} \subset \Omega_j$, for a fixed $k$, the
balls $B_l^{j+1}$ with
$\Bjk\cap B_l^{j+1}\neq \emptyset $ must have radii $r_l^{j+1} \leq c r_k^j$
for some constant $c$.
Therefore
$B_{l}^{j+1} \subset  \Bjkprime:= (1+2c)\Bjk$.  Moreover, by the properties of the
Whitney balls, given $\lambda > 1$ we can take $c$ sufficiently
large so that $\Bjkprime$ contains $\lambda \Bij$ for all $\Bij$ intersecting $\Bjk$.
Using this fact as well as (\ref{PNq}) and (\ref{faze}), and proceeding in the same
way as in the derivations of (\ref{cij1}) and (\ref{relim}), we get
\begin{align}
 \label{gradelljk}
(r_k^j)^{\qstar}\int_{\Bjk}| \ell^j\grad \chi_k^j|^{\qstar}d\mu
& \leq K^{\qstar-1} \int_{\Bjk}\Big(\sum_{i} \ind_{B_{i}^j}|f-\cij|^{\qstar}
+\sum_{l}
\ind_{B_{l}^{j+1}}|f-c_{l}^{j+1}|^{\qstar}\Big) d\mu \nonumber\\
&\leq K^{\qstar-1}\!\!\!\!\!\!\sum_{i: \Bjk\cap \Bij\neq \emptyset} \int_{B_{i}^j}|f-\cij|^{\qstar}d\mu \nonumber \\
&+K^{\qstar-1} \int_{\Bjkprime}\sum_{l}
\ind_{B_{l}^{j+1}}|f- f_{\Bjkprime} + f_{\Bjkprime} - c_{l}^{j+1}|^{\qstar} d\mu \\
&  \lesssim
K^{\qstar-1}\!\! \sum_{i: \Bjk\cap \Bij\neq \emptyset} (r_i^j 2^j)^{\qstar}\mu(\Bij)
+K^{\qstar} (r_k^j 2^j)^{\qstar}\mu(\Bjkprime) \nonumber \\
& \lesssim K^{\qstar} (r_k^j 2^j)^{\qstar}\mu(\Bjkprime). \nonumber
\end{align}
Therefore
\begin{equation}\label{ljgradchinorm}
 \left(\aver{\Bjkprime}|\ell^j \grad \chi_k^j|^{\qstar}d\mu\right)^{\frac{1}{\qstar}}\leq CK 2^j.
\end{equation}

The $\ell^j\chi_k^j$'s seem to be a good choice for our atoms but
unfortunately they do not satisfy the cancellation condition. If we
wanted to get atoms with property $3^\prime$ (see
Remarks~\ref{atom-rems}) instead of the vanishing moment condition
3, we could use (\ref{cij1}) to bound the $\Lone$ norm of
$\ell^j\chi_k^j$, then normalize as below. However, if we want to
obtain the vanishing moment condition,  we need to consider instead
the following decomposition of the $\ell^j$'s: $\ell^j=\sum_k
\elljk$ with
\begin{equation}
\label{def:elljk} \ell_k^j=(f-\cjk)\chi_k^j-\sum_l(f-c_l^{j+1})
\chi_l^{j+1}\chi_k^j+\sum_lc_{k,l}\chi_l^{j+1},
\end{equation}
where
$$
c_{k,l}:=
\frac{1}{\chi_l^{j+1}(B_l^{j+1})}\int_{B_l^{j+1}}(f-c_j^{l+1})\chi_l^{j+1}\chi_k^jd\mu.$$
First, this decomposition holds since $\sum_k\chi_k^j=1$ on the
support of $\chi_l^{j+1}$ and $\sum_k c_{k,l}=0$. Furthermore, the
cancellation condition
$$\int_{M}\ell_k^jd\mu=0$$
follows from the fact that
 $\int_M(f-\cjk)\chi_k^j d\mu=0$ and the definition of $c_{k,l}$, which immediately gives
$\int\left((f-c_l^{j+1})\chi_l^{j+1}\chi_k^j-c_{k,l}\chi_l^{j+1}\right)d\mu=0$.

Noting that $\ell_k^j$ is supported in the ball $\Bjkprime$ (see
above), let us estimate $ \|\grad
\ell_k^j\|_{L_{\qstar}(\Bjkprime)}$. Write
\begin{align*}
\grad \ell_k^j&= (\grad f) \chi_k^j+ (f-\cjk) \grad \chi_k^j
- \sum_l (f-c_l^{j+1})\grad \chi_l^{j+1}\chi_k^j\\
& - \sum_l(f-c_l^{j+1})\chi_l^{j+1}\grad \chi_k^j
-(\grad f) \ind_{\Omega_{j+1}}\chi_k^j
+ \sum_lc_{k,l}\grad \chi_l^{j+1}\\
& = \grad f(1 - \ind_{\Omega_{j+1}})\chi_k^j + ((f-\cjk) - \sum_l(f-c_l^{j+1})\chi_l^{j+1})\grad \chi_k^j\\
& - \sum_l (f-c_l^{j+1})\grad \chi_l^{j+1}\chi_k^j + \sum_lc_{k,l}\grad \chi_l^{j+1}.
\end{align*}
Since the first term, concerning the gradient of $f$,  is supported in $\Bjk\cap F_{j+1}$, we can use Proposition~\ref{nabN}, the definition of $F_{j+1}$ and the Lebesgue differentiation theorem to bound it, namely
$$\int_{\Bjk}|\grad f|^{\qstar}d\mu \leq  2^{(j+1)\qstar}\mu(\Bjk).$$
Recalling (\ref{ljgradchi}), we see that the estimate of the $L_{\qstar}$ norm of the second term is given
by (\ref{ljgradchinorm}).  The third term can be handled by the pointwise estimate (\ref{h}):
$$
\|\sum_l (f-c_l^{j+1})\grad \chi_l^{j+1}\chi_k^j\|_{\qstar} \leq CK 2^{j+1} \mu(\Bjk)^{1/\qstar}.$$
For $\sum_lc_{k,l}\grad \chi_l^{j+1}$, note first that $c_{k,l}=0$ when  $\Bjk\cap B_l^{j+1} = \emptyset$ and $|c_{k,l}|\leq C2^jr_l^{j+1}$ thanks to (\ref{cij1}). By the properties of the partition of unity, this
gives $|c_{k,l}\grad \chi_l^{j+1}| \leq C2^{j}$ for every $l$, and as the sum has at most $K$ terms at
each point we get the pointwise bound
$$|\sum_lc_{k,l}\grad \chi_l^{j+1}| \leq C K  2^{j},$$
from which it follows that
$$\|\sum_lc_{k,l}\grad \chi_l^{j+1}\|_{\qstar} \leq C K 2^j \mu(\Bjkprime)^{1/\qstar}.$$
Thus
\begin{equation} \label{ljk}
\|\grad \ell_k^j\|_{\qstar}\leq \gamma  2^j \mu(\Bjkprime)^{1/\qstar}.
\end{equation}

We now set $a_k^j=\gamma^{-1}2^{-j} \mu(\Bjkprime)^{-1}\ell_k^j$ and
$\lambda_{j,k}=\gamma 2^j \mu(\Bjkprime)$. Then  $
f=\sum_{j,k}\lambda_{j,k}a_k^j$, with $a_k^j$ being $(1,\qstar)$  homogeneous
Hardy-Sobolev atoms and
\begin{align*}
\sum_{j,k}|\lambda_{j,k}|&= \gamma \sum_{j,k}2^j\mu(\Bjkprime)
\\
&\leq \gamma' \sum_{j,k}2^j\mu(\underline{\Bjk})
\\
&\leq \gamma' \sum_{j}2^j\mu(\{x:\, \Mq(Nf)(x)>2^j\})
\\
&\leq C \int \Mq(Nf)d\mu
\\
&
\leq C_q \|Nf\|_1\sim \|f\|_{\dotMone}.
\end{align*}
We used that $\mu(\Bjkprime)\sim \mu(\underline{\Bjk})$ thanks to
$(D)$, and the fact that the $\underline{\Bjk}$ are disjoint.
\end{proof}

\begin{rem}
\label{rem-otheratoms}
 As pointed out in the proof following
(\ref{ljgradchinorm}), we can get an atomic decomposition as in
Proposition \ref{AHS}, but replacing the vanishing moment condition
$3$ of the atoms from Definition~\ref{HHSA} by condition $3^\prime$
in Remarks~\ref{atom-rems}. This does not assume a Poincar\'e
inequality.
\end{rem}

\textbf{\large{Conclusion:}} Let $M$ be a complete Riemannian manifold satisfying $(D)$.
Then
\begin{itemize}
\item[1.] for all $\frac{s}{s+1}<q<1$,
$$\dotMone\subset \dot{HS}_{\qstar,{\rm ato}}^1.$$

\item[2.] (Theorem~\ref{mainthm}) If moreover  we assume $(P_1)$, then
$$
\dotMone=\dotHStato
$$
for all $t>1$.
\end{itemize}

\section{The nonhomogeneous case}
\label{atomic-nonhomo} We begin by recalling the definitions of the
nonhomogeneous versions of the spaces considered above.
\begin{dfn} {\em (\cite{hajlasz2})}
Let $1\leq p\leq\infty$. The Sobolev space $M_{p}^{1}$ is the set of all functions $u\in L_{p}$ such that there exists a measurable function $g\geq0$, $g\in \Lp$, satisfying
 \begin{equation}\label{Mp2}
 |u(x)-u(y)|\leq d(x,y)(g(x)+g(y))\;\mu-a.e.
\end{equation}
That is,  $M_p^1= \Lp\cap \dot{M}_p^1$.  We equip $M_{p}^{1}$ with the norm
$$
 \Arrowvert u\Arrowvert_{M^{1}_{p}}= \|u\|_p + \inf_{g\textrm{ satisfies } (\ref{Mp2})}\Arrowvert g\Arrowvert_{p}.
$$
\end{dfn}

From Theorem~\ref{MN1}, we deduce that for $1\leq p\leq \infty$,
$$ M_p^1=\left\lbrace f\in \Lp: \; Nf\in \Lp\right\rbrace
$$
with equivalent norm
$$
\|f\|_{M_p^1}=\|f\|_p+\|Nf\|_p .
$$

\begin{dfn} We define the Hardy-Sobolev space $\tilMone$
as the set of all functions  $u\in \Honemax$ such that there exists
a measurable function $g\geq0$, $g\in \Lone$, satisfying
 \begin{equation}\label{Mp3}
 |u(x)-u(y)|\leq d(x,y)(g(x)+g(y))\;\mu-a.e.
\end{equation}
We equip $\tilMone$ with the norm
$$
 \| u\|_{\tilMone}= \|u^+\|_{1} + \inf_{g\textrm{ satisfies } (\ref{Mp3})}\| g\|_{1}.
$$
We have $\tilMone= \Honemax\cap \dotMone$.
\end{dfn}
Again by Theorem~\ref{MN1}, 
$$\tilMone=\left\lbrace f\in \Honemax: \; Nf\in \Lone\right\rbrace,
$$
with equivalent norm
$$
\|f\|_{\tilMone}=\|\fplus\|_{1}+\|Nf\|_1.
$$
By (\ref{fleqfplus}) and Corollary~\ref{MW}, we have
$$
\widetilde{M}_1^1\subset  M_1^1\subset W_1^1.
$$

In \cite{babe}, the authors also defined the nonhomogeneous atomic
Hardy-Sobolev spaces. Let us recall their definition.
\begin{dfn}[\cite{babe}] For $1<t\leq \infty$, we say that a function $a$ is a nonhomogeneous Hardy-Sobolev $(1,t)$-atom  if
\begin{itemize}
\item[1.] $a$ is supported in a ball $B$,
\item[2.] $\|a\|_{W^{1}_t}:=\|a\|_t+\|\grad a\|_{t}\leq \mu(B)^{-\frac{1}{t'}}$,
\item[3.] $\int a d\mu=0$.
\end{itemize}
\end{dfn}
They then define, for every $1<t\leq \infty$, the nonhomogeneous
Hardy-Sobolev space $\HStato$ as follows: $f\in \HStato$ if there
exists a sequence of nonhomogeneous Hardy-Sobolev $(1,t)$-atoms
$\{a_j\}_j$ such that $f=\sum_{j}\lambda_{j}a_{j}$ with
$\sum_{j}|\lambda_{j}|<\infty$. This space is equipped with the norm
$$
\|f\|_{\HStato}:=\inf \sum_{j}|\lambda_{j}| ,
$$
where the infimum is taken over all such decompositions.

We also recall the following comparison between these atomic Hardy-Sobolev spaces.
\begin{thm} \label{thm:comp} {\em(\cite{babe})} Let  $M$ be a complete Riemannian manifold satisfying  $(D)$ and a Poincar\'e inequality $(P_{q})$ for some $q>1$. Then $\HStato\subset  HS_{\infty,{\rm ato}}^{1}$ for every $t\geq q$ and therefore $ HS_{t_1,{\rm ato}}^{1}= HS_{t_2,{\rm ato}}^{1}$  for every $q\leq t_1,t_2\leq \infty$.
 \end{thm}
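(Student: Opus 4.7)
The plan is to mirror the proof of the homogeneous version (Theorem~\ref{thm:comph}) and reduce matters to showing that a single nonhomogeneous $(1,t)$-atom $a$ supported in a ball $B_0$ admits a representation $a = \sum_{j,k}\lambda_{j,k}\tilde a^j_k$, convergent in $\Wone$, where the $\tilde a^j_k$ are nonhomogeneous $(1,\infty)$-atoms and $\sum_{j,k}|\lambda_{j,k}|\leq C$ with $C$ independent of $a$. The equality $HS^1_{t_1,{\rm ato}} = HS^1_{t_2,{\rm ato}}$ for $q \leq t_1, t_2 \leq \infty$ then follows, since the reverse inclusion $HS^1_{\infty,{\rm ato}}\subset HS^1_{t,{\rm ato}}$ is trivial (a $(1,\infty)$-atom is automatically a $(1,t)$-atom for every $t\geq 1$).

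First I would apply a Calder\'on--Zygmund decomposition to $a$ itself at levels $\alpha_j = 2^j\alpha_0$ for $j\geq 0$, where $\alpha_0\sim \mu(B_0)^{-1}$ is chosen so that $\Omega_0=\{\Mq(|\grad a|)>\alpha_0\}$ is contained in a controlled dilate of $B_0$. This is the analogue of Proposition~\ref{CZ}, but applied to $|\grad a|$ directly rather than to $Nf$, which is legitimate since $a\in W^1_t\subset \Wone$ when restricted to $B_0$ (finite measure). At each level this yields $a=g^j+\sum_i b^j_i$ with $\|\grad g^j\|_\infty \leq C\alpha_j$ and $b^j_i$ supported in Whitney balls $B^j_i$ of $\Omega_j$. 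The Poincar\'e assumption $(P_q)$ enters through the $L_{t/q}$-boundedness of $\Mq$ for $t>q$, and via the weak-type $(1,1)$ bound at the endpoint $t=q$; this gives the essential control $\sum_j 2^j\mu(\Omega_j)\lesssim \int_{\Omega_0}\Mq(|\grad a|)\,d\mu$, which is then bounded by H\"older together with the $L_t$-size of $\grad a$.

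Next I would employ the telescoping identity $a=g^0+\sum_{j\geq 0}(g^{j+1}-g^j)$ and decompose each difference $\ell^j=g^{j+1}-g^j$ via the Whitney partition of unity for $\Omega_j$, as in (\ref{def:elljk}), into pieces $\ell^j_k$ supported in dilated Whitney balls $(B^j_k)'$ and satisfying $\int \ell^j_k\,d\mu=0$. Mimicking the derivation of (\ref{ljk}) with $\qstar$ replaced by $\infty$ produces $\|\grad \ell^j_k\|_\infty\leq C 2^j$; together with the mean-zero condition and the radius of $(B^j_k)'$, a Poincar\'e-type bound also yields $\|\ell^j_k\|_\infty\leq C 2^j r^j_k$. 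Setting $\lambda_{j,k}=c\,2^j\mu((B^j_k)')$ and normalizing produces a nonhomogeneous $(1,\infty)$-atom $\tilde a^j_k$, provided $2^j r^j_k$ remains of bounded order. The coefficient sum $\sum|\lambda_{j,k}|\lesssim \sum_j 2^j\mu(\Omega_j)$ is finite by the preceding step, while the coarsest piece $g^0$, essentially supported in $B_0$ and controlled in $W^1_\infty$, is itself a bounded multiple of a single nonhomogeneous $(1,\infty)$-atom.

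The main obstacle, not present in the homogeneous case, is verifying the $L_\infty$-size condition $\|\tilde a^j_k\|_\infty\leq \mu((B^j_k)')^{-1}$ simultaneously with the gradient bound. This requires $2^j r^j_k$ to be of bounded order, which is not automatic since Whitney radii depend on the geometry of $\Omega_j$. I would handle this by splitting the Whitney family at each level into balls with $r^j_k\leq 2^{-j}$ (which give genuine $(1,\infty)$-atoms directly) and the complementary large balls, whose total measure is controlled in terms of $\mu(B_0)$; the contribution from the latter is then grouped together with $g^0$ and realized as one additional nonhomogeneous atom at scale $B_0$, whose $L_\infty$- and gradient-norms are both under control via the $L_t$-size hypothesis on $a$ and H\"older's inequality.
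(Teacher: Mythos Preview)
The paper does not prove this theorem; it is quoted from \cite{babe}, so there is no in-paper proof to compare against. I can only comment on the soundness of your sketch.

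Your overall architecture---Calder\'on--Zygmund decompose a single $(1,t)$-atom $a$ at dyadic levels, telescope $a=g^0+\sum_j(g^{j+1}-g^j)$, localize $\ell^j=g^{j+1}-g^j$ to Whitney balls with cancellation, and sum the coefficients via $\sum_j 2^j\mu(\Omega_j)$---is the standard one and almost certainly what \cite{babe} does. The place where your argument goes off track is the choice of level sets: you take $\Omega_j=\{\Mq(|\grad a|)>2^j\alpha_0\}$, i.e.\ the \emph{homogeneous} CZ set. This gives $\|\grad g^j\|_\infty\lesssim 2^j$ but no direct control on $\|g^j\|_\infty$, which is why you end up needing $\|\ell^j_k\|_\infty\lesssim r^j_k\,2^j$ from the Lipschitz bound and then face the ``obstacle'' that $r^j_k\,2^j$ need not be bounded. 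Your proposed fix (separating the balls with $r^j_k\le 2^{-j}$ and absorbing the large ones into $g^0$) is not obviously correct: the large-ball pieces $\ell^j_k$ still carry gradients of size $2^j$ with $j$ unbounded, and you give no mechanism for assembling them into a \emph{single} $(1,\infty)$-atom with bounded gradient.

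The obstacle is self-inflicted. In the nonhomogeneous setting one should mimic Proposition~\ref{CZN} rather than Proposition~\ref{CZ}: take
\[
\Omega_j=\{\,x:\Mq(|a|+|\grad a|)(x)>2^j\alpha_0\,\}.
\]
Then on $F_j=M\setminus\Omega_j$ one has $|a|\le 2^j\alpha_0$ a.e.\ by Lebesgue differentiation, and the constants $c^j_i$ (defined as in Proposition~\ref{CZN}) satisfy $|c^j_i|\lesssim 2^j\alpha_0$ as in (\ref{calp}). Hence $\|g^j\|_\infty\lesssim 2^j\alpha_0$ directly, and exactly as in (\ref{Linfty}) one gets $\|\ell^j_k\|_\infty\lesssim 2^j\alpha_0$ with no factor of $r^j_k$. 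Normalizing by $\lambda_{j,k}\sim 2^j\alpha_0\,\mu\big((B^j_k)'\big)$ then produces genuine nonhomogeneous $(1,\infty)$-atoms, and the coefficient sum is controlled by $\sum_j 2^j\alpha_0\,\mu(\Omega_j)$; this last sum is bounded using the weak or strong $(t,t)$ bound for $\Mq$ (valid since $t\ge q$) together with the size hypothesis $\|a\|_t+\|\grad a\|_t\le\mu(B_0)^{-1/t'}$ and the choice $\alpha_0\sim\mu(B_0)^{-1}$. No splitting into small and large Whitney balls is needed.
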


\subsection{Atomic decomposition of $\tilMone$ and comparison with $\HStato$}

As in the homogeneous case, under the Poincar\'e inequality $(P_1)$,
$\HStato\subset \tilMone$:
\begin{prop} Let $M$ be a complete Riemannian manifold satisfying $(D)$ and $(P_1)$. Let  $1<t\leq \infty$ and $a$ be
a nonhomogeneous Hardy-Sobolev $(1,t)$-atom. Then $a\in \tilMone$ with
$\|a\|_{\tilMone}\leq C_t$, the constant depending only on $t$, the
doubling constant and the constant appearing in $(P_1)$ , but not on
$a$. Consequently $\HStato\subset \tilMone$ with
$$
\|f\|_{\tilMone}\leq C_t \|f\|_{\HStato}.
$$
\end{prop}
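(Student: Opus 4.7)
The plan is to reduce the statement to two results already established in the paper. A nonhomogeneous Hardy-Sobolev $(1,t)$-atom $a$, supported in a ball $B$, satisfies both $\|\grad a\|_{t} \leq \mu(B)^{-1/t'}$ and $\|a\|_{t} \leq \mu(B)^{-1/t'}$, together with $\int a\, d\mu = 0$. The first bound (combined with the vanishing moment) means that $a$ is simultaneously a homogeneous Hardy-Sobolev $(1,t)$-atom in the sense of Definition~\ref{HHSA}, while the second means that $a$ is also an $\Hone$-atom in the sense of Definition~\ref{Hone-atoms}.

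First I would invoke Proposition~\ref{comp1} to obtain $\|Na\|_1 \leq C_t$, since the argument there relies only on the gradient bound, the cancellation, and the hypotheses $(D)$ and $(P_1)$. Next, the estimates established within the proof of $\Honeato \subset \Honemax$ in Proposition~\ref{prop-Hone} (the local bound (\ref{doubleball}) and the far-away decay on the annuli $C_k(B)$) apply verbatim to $a$ viewed as an $\Hone$-atom, yielding $\|a^+\|_1 \leq C_t$. Adding these two estimates gives $\|a\|_{\tilMone} = \|a^+\|_1 + \|Na\|_1 \leq C_t$.

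For a general $f = \sum_{j}\lambda_{j}a_{j} \in \HStato$, each nonhomogeneous atom satisfies $\|a_j\|_1 \leq \mu(B_j)^{1/t'}\|a_j\|_t \leq 1$, so the series converges absolutely in $\Lone$. This $\Lone$ convergence allows me to write, for any ball $B$, $f_B = \sum_j \lambda_j (a_j)_B$, and the triangle inequality then yields the pointwise dominations
\begin{equation*}
Nf(x) \leq \sum_j |\lambda_j|\, Na_j(x), \qquad f^+(x) \leq \sum_j |\lambda_j|\, a_j^+(x),
\end{equation*}
the second one exactly as in the proof of Proposition~\ref{prop-Hone}. Integrating and invoking the atomic estimates of the previous paragraph gives $\|Nf\|_1 + \|f^+\|_1 \leq C_t \sum_j |\lambda_j|$, and taking the infimum over admissible decompositions yields $\|f\|_{\tilMone} \leq C_t \|f\|_{\HStato}$.

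The main obstacle, essentially bookkeeping, is to verify that the $\Lone$ convergence of $\sum_j \lambda_j a_j$ is strong enough to justify both pointwise dominations (for $Nf$ via convergence of averages over balls, for $f^+$ via the pairing against $\varphi \in \Tone(x)$, which is in $L_\infty$ with compact support), and to observe that a nonhomogeneous Hardy-Sobolev $(1,t)$-atom is simultaneously a homogeneous Hardy-Sobolev $(1,t)$-atom and a Coifman--Weiss $\Hone$-atom. Once these identifications are made, no new analytic estimates are needed.
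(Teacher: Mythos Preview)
Your proposal is correct and follows essentially the same approach as the paper: recognize that a nonhomogeneous Hardy-Sobolev $(1,t)$-atom is simultaneously a homogeneous Hardy-Sobolev $(1,t)$-atom (so Proposition~\ref{comp1} gives $\|Na\|_1 \leq C_t$) and an $\Hone$-atom (so the inclusion (\ref{Honeato-Honemax}) gives $\|a^+\|_1 \leq C_t$), then pass to general $f$ by subadditivity. The paper's own proof is only a few lines and invokes exactly these two ingredients; your write-up simply makes the passage from atoms to the full space more explicit than the paper does.
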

\begin{proof} The proof follows analogously to that of Proposition~\ref{comp1},
noting that in the nonhomogeneous case every Hardy-Sobolev $(1,t)$-atom $a$ is an $\Hone$ atom and so by (\ref{Honeato-Honemax}) is in $\Honemax$
with norm bounded by a constant.
\end{proof}

Now for the converse, that is, to prove that $ \tilMone\subset
\HStato$,  we establish, as in the homogeneous case, an atomic
decomposition  for functions $f\in \tilMone$ using a
Calder\'on-Zygmund decomposition for such functions.

\begin{prop}[Calder\'on-Zygmund decomposition] \label{CZN} Let $M$ be a complete Riemannian manifold satisfying $(D)$. Let $f\in  \tilMone$, $\frac{s}{s+1}< q<1$ and $\alpha >0$. Then one can find a collection of balls $\{B_{i}\}_{i}$, functions $b_{i}\in \Wone$ and a Lipschitz function $g$ such that the following properties hold:
\begin{equation*}
f = g+\sum_{i}b_{i},
\end{equation*}
\begin{equation*}
|g(x)|+ |\grad g(x)|\leq C\alpha\quad \mbox{for}\; \mu-a.e\; x\in M,
\end{equation*}
\begin{equation*}
\supp b_{i}\subset B_{i}, \,\|b_{i}\|_1\leq C\alpha \mu(B_i)r_i,\,
\|\,b_i+ |\grad b_{i}|\,\|_{q}\leq C\alpha\mu(B_{i})^{\frac{1}{q}},
\end{equation*}
\begin{equation*}
\sum_{i}\mu(B_{i})\leq \frac{C}{\alpha}\int (\fplus+N f) d\mu,
\end{equation*}
\begin{equation*}
\mbox{and} \quad \sum_{i}\chi_{B_{i}}\leq K.
\end{equation*}
The constants $C$ and $K$ only depend on the constant in $(D)$.
\end{prop}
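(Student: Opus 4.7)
My plan is to adapt the homogeneous Calder\'on--Zygmund decomposition of Proposition~\ref{CZ}, enlarging the level set so as to control $\fplus$ in addition to $\Mq(Nf)$. Concretely, I set
$$\Omega := \{x \in M : \cM(\fplus)(x) > \alpha\} \cup \{x \in M : \Mq(Nf)(x) > \alpha\}, \qquad F := M \setminus \Omega.$$
The weak-$(1,1)$ bound for $\cM$ applied to $\fplus \in \Lone$, combined with $\|\Mq(Nf)\|_1 \leq C\|Nf\|_1$ as used in (\ref{mO}), gives
$$\mu(\Omega) \leq \frac{C}{\alpha}\int_M (\fplus + Nf)\, d\mu < \infty,$$
so $\Omega \neq M$. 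On $F$, Lebesgue differentiation applied to $\fplus$ and to $|Nf|^q$ yields $\fplus \leq \alpha$ and $Nf \leq \alpha$ a.e.; via (\ref{fleqfplus}) and Proposition~\ref{nabN} this gives $|f| \leq \alpha$ and $|\grad f| \leq C\alpha$ a.e.\ on $F$.

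With this $\Omega$, I take the same Whitney decomposition $\{B_i\}$, partition of unity $\{\chi_i\}$, constants $\cij = \chi_i(B_i)^{-1}\int_{B_i} f\chi_i\, d\mu$, bad parts $b_i = (f - \cij)\chi_i$, and good part $g = f - \sum_i b_i$ as in Proposition~\ref{CZ}. Every estimate from the homogeneous proof depending only on $\Mq(Nf)(y_i) \leq \alpha$ for some $y_i \in \Bibar \cap F$ carries over verbatim: the bounded overlap property; $\|b_i\|_1 \leq C\alpha\mu(B_i)r_i$ via (\ref{cij1}) and the Sobolev--Poincar\'e inequality (\ref{PNq}); $\|\grad b_i\|_q \leq C\alpha\mu(B_i)^{1/q}$ via (\ref{gradbiq}); and $|\grad g| \leq C\alpha$ a.e.\ via (\ref{gradg}) together with the pointwise bound (\ref{h}) on the corrective term.

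What is genuinely new is the pointwise bound $|g| \leq C\alpha$ and the $L_q$ bound on $b_i$ itself. On $F$, $g = f$, so $|g| \leq \fplus \leq \alpha$ a.e. On $\Omega$, $\sum_i \chi_i = 1$ forces $g = \sum_i \cij \chi_i$; for $x \in \Omega$ and $i \in I_x$ the closure $\Bibar$ meets $F$ at some $y_i$, hence
$$|\cij| \leq \frac{\mu(B_i)}{\chi_i(B_i)} \aver{B_i} |f|\, d\mu \leq C \aver{\Bibar} |f|\, d\mu \leq C\,\cM(\fplus)(y_i) \leq C\alpha,$$
using $\chi_i(B_i) \simeq \mu(B_i)$, doubling, and $|f| \leq \fplus$. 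The bounded overlap $|I_x| \leq K$ then gives $|g(x)| \leq CK\alpha$. Applying the same averaging argument to $|f|^q$ gives $\aver{B_i} |f|^q\, d\mu \leq C\,\cM(\fplus^q)(y_i) = C(\Mq \fplus(y_i))^q \leq C\alpha^q$, so $\|f\|_{L_q(B_i)} \leq C\alpha\mu(B_i)^{1/q}$ and hence $\|b_i\|_q \leq C_q\bigl(\|f\|_{L_q(B_i)} + |\cij|\mu(B_i)^{1/q}\bigr) \leq C\alpha\mu(B_i)^{1/q}$, yielding the required bound on $\|\,b_i+|\grad b_i|\,\|_q$.

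I do not anticipate a substantive obstacle; the only delicate point is the choice of maximal operators in the definition of $\Omega$. Since $\Mq \leq \cM$ for $q < 1$ by the power-mean inequality, one must apply the honest Hardy--Littlewood maximal function $\cM$ to $\fplus$ in order to extract pointwise control on $f$ and on the averages $\cij$, while retaining $\Mq$ on $Nf$ because the Sobolev--Poincar\'e inequality (\ref{PNq}) which drives the homogeneous argument demands $L_q$ averages of $Nf$ with $q < 1$.
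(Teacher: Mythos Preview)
Your argument is correct and follows the same overall architecture as the paper's proof, but with two genuine differences worth noting. First, the paper defines a single level set $\Omega=\{\Mq(\fplus+Nf)>\alpha\}$, whereas you split it as $\{\cM(\fplus)>\alpha\}\cup\{\Mq(Nf)>\alpha\}$; both give the required measure bound, yours via weak-$(1,1)$ for $\cM$, the paper's via the strong $L_{1/q}$ bound applied to $(\fplus+Nf)^q$. Second, and more interestingly, to show $|c_i|\leq C\alpha$ you use the pointwise inequality $|f|\leq\fplus$ together with $\cM(\fplus)(y_i)\leq\alpha$, while the paper observes directly that $\gamma\chi_i/\chi_i(B_i)\in\Tone(y)$ for every $y\in B_i$, so that $|c_i|\leq\gamma^{-1}\fplus(y)$, and then averages in $L_q$ over $B_i$. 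Your route is more elementary; the paper's test-function trick has the advantage that it is reused in the subsequent atomic decomposition (Proposition~\ref{AHSN}) to bound the mixed constants $c_{k,l}$, where the analogous averaging argument is less immediate. For the Calder\'on--Zygmund statement itself, however, either approach suffices.
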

\begin{proof}
The proof follows the same steps as that of Proposition~\ref{CZ}. We
will only mention the changes that occur due to the nonhomogeneous
norm. Let $f\in \tilMone$,  $\frac{s}{s+1}< q<1$ and $\alpha >0$.
The first change is that we consider the open set
$$
\Omega=\{x : \Mq(\fplus +Nf)(x)>\alpha\}.
$$
 We define, as in the homogeneous case, the partition of unity $\chi_i$ corresponding to the
 Whitney decomposition $\{\underline{B_i}\}_i$ of $\Omega$, the functions $b_{i}=(f-c_i)\chi_{i}$ with
$c_i:=\frac{1}{\chi_{i}(B_{i})}\int_{B_{i}}f\chi_{i} d\mu$, and $g = f - \sum b_i$.
In addition to the previous estimates (\ref{cij1}) - (\ref{gradbiq}) for $b_i$ and $\grad b_i$,
we need here to estimate $\|b_i\|_q$.

We begin by showing that for $x\in \Omega$,
\begin{equation} \label{calp}
|c_i|\leq C \alpha
\end{equation}
 for every $i\in I_x$.
Set $\varphi_i= \gamma\frac{\chi_i}{\chi_i(B_i)}$. From the properties of $\chi_i$, in particular
since $\chi_i(B_i) \approx \mu(B_i)$, we see that we can choose $\gamma$ (independent of $i$) so
that $\varphi_i \in \Tone(y)$ and thus
$$
 |c_i|\leq \gamma^{-1}\fplus(y) \; \mbox{ for all } \; y \in B_i.
$$
Recall that
the ball $\Bibar = C_2 B_i$ has nonempty intersection with $F$. Taking $y_0 \in F \cap \Bibar$, we get, by integrating the inequality above,
$$|c_i| \leq \gamma^{-1}\left(\aver{B_i}(\fplus)^qd\mu\right)^{\frac{1}{q}}
\leq C\left(\aver{\Bibar}(\fplus)^qd\mu\right)^{\frac{1}{q}}\\
 \leq C \Mq(\fplus)(y_0)
\leq C\alpha.
$$
Combining this with (\ref{fleqfplus}), we have
$$
\|b_i\|_q \leq \left(\int_{B_i}|f - c_i|^q\right)^{\frac{1}{q}} \leq \left( \aver{\Bibar} |\fplus|^qd\mu\right)^{\frac{1}{q}}\mu(B_i)^{\frac{1}{q}} + |c_i|\mu(B_i)^{\frac{1}{q}}
\leq C\alpha\mu(B_i)^{\frac{1}{q}}.
$$
For $g$, we need to prove that $\|g\|_{\infty}\leq C\alpha$. We have
\begin{equation}
\label{eqn:g}
g=f\ind_{F}+\sum_i c_i \chi_i.
\end{equation}
For the first term we have $|f|\leq \fplus \leq \Mq(\fplus)$ at all
Lebesgue points and thus $|f\ind_F| \leq \alpha$ $\mu$-a.e. For the
second term, thanks to the bounded overlap property and
(\ref{calp}), we get the desired estimate.
\end{proof}
\begin{prop} \label{AHSN}
Let $M$ be a complete Riemannian manifold satisfying $(D)$. Let
$f\in \tilMone$. Then for all $\frac{s}{s+1}<q<1$, there is a
sequence of  $(1,\qstar)$  ($\qstar=\frac{sq}{s-q}$)  nonhomogeneous
atoms $\{a_j\}_j$,  and a sequence of  scalars $\{\lambda_j\}_j$,  such that 
$$f=\sum_j
\lambda_j a_j  \quad \mbox{ in } \Wone,
\mbox{ and } \quad  \sum |\lambda_j| \leq C_q
\|f\|_{\tilMone}.$$
Consequently, $\tilMone\subset HS^1_{\qstar, ato}$  with $\|f\|_{HS_{\qstar,{\rm ato}}^1}\leq C_q
\|f\|_{\tilMone}$.
\end{prop}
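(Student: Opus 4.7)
The plan is to mimic the proof of Proposition \ref{AHS}, with three essential modifications dictated by the nonhomogeneous setting: substitute Proposition \ref{CZN} for its homogeneous counterpart; have the telescoping series converge in $\Wone$ rather than only $\dotWone$; and verify the extra size bound $\|a\|_{\qstar}\leq \mu(B)^{-1/{\qstar}'}$ in the definition of a nonhomogeneous $(1,\qstar)$-atom.

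First I would apply Proposition \ref{CZN} at every level $\alpha=2^j$, $j\in\mathbb{Z}$, obtaining the decomposition $f=g^j+\sum_i b_i^j$ relative to a Whitney cover $\{B_i^j\}$ of $\Omega_j:=\{\Mq(\fplus+Nf)>2^j\}$. I would then prove that the telescoping identity $f=\sum_{j\in\mathbb{Z}}(g^{j+1}-g^j)$ converges in $\Wone$. As $j\to+\infty$, both $\sum_i\|\grad b_i^j\|_1\lesssim 2^j\mu(\Omega_j)+\int_{\Omega_j}|\grad f|\,d\mu$ (as in \eqref{gradbij}) and $\sum_i\|b_i^j\|_1\leq C\int_{\Omega_j}|f|\,d\mu$ (from \eqref{bi-trivial}) vanish, since $\mu(\Omega_j)\to 0$ and $f,|\grad f|\in\Lone$. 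As $j\to-\infty$, the representation $g^j=f\ind_{F^j}+\sum_i c_i^j\chi_i^j$ from the proof of Proposition \ref{CZN} yields $\|g^j\|_1\leq \int_{F^j}|f|\,d\mu+CK\,2^j\mu(\Omega_j)\to 0$ by dominated convergence (using that $\Mq(\fplus+Nf)>0$ a.e.\ so $F^j$ collapses on $\supp f$, together with $|c_i^j|\leq C\,2^j$) and the summability $\sum_j 2^j\mu(\Omega_j)\lesssim \|\fplus+Nf\|_1$; a parallel estimate via \eqref{gradgj} and \eqref{gradgj1} gives $\|\grad g^j\|_1\to 0$.

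Next I would decompose $\ell^j:=g^{j+1}-g^j=\sum_k\elljk$ exactly as in \eqref{def:elljk}, using the Whitney partitions of unity for $\Omega_j$ and $\Omega_{j+1}$, so each $\elljk$ is supported in $\Bjkprime$ and has vanishing mean. The gradient bound $\|\grad\elljk\|_{\qstar}\leq C\,2^j\mu(\Bjkprime)^{1/\qstar}$ carries over verbatim from the proof of Proposition \ref{AHS}. For the new $L_{\qstar}$ size bound, using $\sum_l\chi_l^{j+1}=\ind_{\Omega_{j+1}}$, I would rewrite
\[
\elljk = f\chi_k^j\ind_{F_{j+1}} - \cjk\chi_k^j + \sum_l c_l^{j+1}\chi_l^{j+1}\chi_k^j + \sum_l c_{k,l}\chi_l^{j+1},
\]
and then bound each piece using the pointwise inequality $|f|\leq 2^{j+1}$ on $F_{j+1}$ (Lebesgue differentiation and $|f|\leq \fplus$) together with the uniform bounds $|\cjk|,\,|c_l^{j+1}|,\,|c_{k,l}|\leq C\,2^j$ coming from \eqref{calp} and an analogous testing-against-$\Tone$ argument for $c_{k,l}$. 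Setting $\lambda_{j,k}=\gamma\,2^j\mu(\Bjkprime)$ and $a_k^j=\lambda_{j,k}^{-1}\elljk$ produces nonhomogeneous $(1,\qstar)$-atoms, and $\sum_{j,k}|\lambda_{j,k}|\lesssim \sum_j 2^j\mu(\Omega_j)\lesssim \|\fplus+Nf\|_1\sim \|f\|_{\tilMone}$.

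The main obstacle is establishing the size estimate $\|\elljk\|_{\qstar}\leq C\,2^j\mu(\Bjkprime)^{1/\qstar}$ without a spurious factor of the radius $r_k^j$. A naive application of the Sobolev-Poincar\'e inequality \eqref{PNq} to $f-f_B$ would produce exactly such a factor and obstruct normalization into genuine $(1,\qstar)$-atoms; the key is to bypass Sobolev-Poincar\'e in favor of the pointwise control of $f$ on the good set $F_{j+1}$ coupled with the $L_\infty$ bounds on the averaging constants $\cjk,c_l^{j+1},c_{k,l}$ that follow from the $\Honemax$ structure and the nonhomogeneous Calder\'on-Zygmund decomposition.
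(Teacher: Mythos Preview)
Your proposal is correct and follows essentially the same route as the paper's own proof: apply Proposition~\ref{CZN} at dyadic levels, upgrade the telescoping convergence from $\dotWone$ to $\Wone$ via \eqref{bi-trivial} and the representation $g^j=f\ind_{F^j}+\sum_i c_i^j\chi_i^j$, and obtain the missing $L_{\qstar}$ size condition on $\elljk$ by the very rewriting you give, bounding each term pointwise by $C\,2^j$ (the paper in fact records the stronger $\|\elljk\|_\infty\leq C\,2^j$). Your identification of the key device---replacing Sobolev--Poincar\'e by the pointwise bound $|f|\leq\fplus\leq 2^{j+1}$ on $F_{j+1}$ together with the $\Tone$-testing bounds \eqref{calp} on $\cjk$, $c_l^{j+1}$, $c_{k,l}$---is exactly what the paper does.
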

\begin{proof} Again, we will only mention the additional properties that one should
verify in comparison with the proof of Proposition \ref{AHS}.

First let us see that  (\ref{convgj}) holds in the nonhomogeneous
Sobolev space $\Wone$. We already showed convergence in the
homogeneous $\dotWone$ norm so we only need to verify convergence in
$\Lone$.  By (\ref{bi-trivial})
\begin{equation}
\label{conv-in-L1}
\|g^j-f\|_1 \leq  \sum_i\|b_i^j\|_1 \leq
 C \sum_i \int \ind_{\Bij}|f|d\mu\leq C K\int_{\Omega_j}| f|d\mu \rightarrow 0,
\end{equation}
as $j \ra \infty$.  Here we've used the properties of the
$\chi_i^j$, the bounded overlap property of the $\Bij$, the fact
that $f \in \Lone$ and that $\bigcap \Omega_j = \emptyset$ since
$\Mq(\fplus + Nf)$ is finite $\mu$-a.e.

Taking now $j \ra -\infty$, we write, by (\ref{eqn:g}), (\ref{calp}),
and the bounded overlap property
\begin{equation}
\label{minusinfty}
\int |g^j| \leq \int_{F^j}|f| + \int \sum_i |\cij| \chi_i^j
\leq \int_{\{\Mq(\fplus) \leq 2^j\}}\Mq(\fplus) + CK 2^j |\Omega^j|
\ra 0.
\end{equation}
For the functions $\ell^j= g^{j+1}-g^j$, we have
$$\|\ell^j\chi^j_k\|_{\qstar} \leq C2^j\mu(\Bjk)^{\frac{1}{\qstar}}$$
since by Proposition \ref{CZN}, $\|g^j\|_{\infty}\leq C2^j$.  This estimate also applies when we
replace $\ell^j\chi^j_k$ by the moment-free ``pre-atoms''
\begin{align*}
\elljk &:=(f-\cjk)\chi_k^j-\sum_l(f-c_l^{j+1})\chi_l^{j+1}\chi_k^j+\sum_lc_{k,l}\chi_l^{j+1} \\
& = f(1- \sum_l \chi_l^{j+1}) \chi_k^j + \cjk\chi_k^j + \sum_l c_l^{j+1}\chi_l^{j+1}\chi_k^j +\sum_lc_{k,l}\chi_l^{j+1}.
\end{align*}
The first term, involving $f$,  is $f\ind_{F_{j+1}}\chi_k^j$ which is bounded by $2^{j+1}$ since $|f| \leq \fplus \leq \Mq(\fplus)$ $\mu$-a.e. For the second and third terms, we use (\ref{calp}) and the bounded
overlap property of the $B_l^{j+1}$.  Finally, that
$$|c_{k,l}|= \left|\frac{1}{\chi_l^{j+1}(B_l^{j+1})}\int_{B_l^{j+1}}(f-c_j^{l+1})\chi_l^{j+1}\chi_k^jd\mu\right|\leq c2^j$$
follows by arguing as in the proof of (\ref{calp}), since $\frac{\chi_l^{j+1}\chi_k^j}{\chi_l^{j+1}(B_l^{j+1})}$
can be considered as a multiple of some $\varphi \in \Tone(x)$ for every $x \in \overline{B_l^{j+1}}$,
due to the fact that $|\grad \chi_k^j| \lesssim (r_k^j)^{-1} \lesssim (r_l^{j+1})^{-1}$ when $B_l^{j+1} \cap \Bjk
\neq \emptyset$.

Thus we obtain the stronger $L_\infty$ estimate
\begin{equation}
\label{Linfty}
\|\elljk\|_\infty \leq C2^j
\end{equation}
 from which we conclude,
as $\elljk$ is supported in the ball $\Bjkprime = (1 + 2c)\Bjk$, that
$\|\elljk\|_{\qstar} \leq C2^j\mu(\Bjk)^{\frac{1}{\qstar}}$.

The rest of the proof is exactly the same as that of Proposition \ref{AHS}.
\end{proof}

Now we can state the converse inclusion from Theorem~\ref{prop-Hone}:
\begin{cor}
Let $M$ be a complete Riemannian manifold satisfying $(D)$. Then
$$\Honemax(M) \subset \Honeato(M)$$
with
 $$\|f\|_{\Honeato}\lesssim \|\fplus\|_1,$$
for any choice of $t$ in the definition of $\Hone$ atoms, $1 < t \leq \infty$,
with a constant independent of $t$.
\end{cor}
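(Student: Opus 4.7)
The plan is to mimic the proof of Proposition~\ref{AHSN}, but using only $\fplus$ in place of $\fplus + Nf$. Since we are only required to produce $\Honeato$ atoms (with $L_t$ size and moment conditions, but \emph{no} gradient conditions), all the estimates in that argument which involved $|\grad b_i|$ or $|\grad \elljk|$ become irrelevant, while those which involved $\fplus$ survive intact. In particular, we do not need $f \in \dotMone$.

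First, I would redo the Calder\'on-Zygmund decomposition, Proposition~\ref{CZN}, with the open set
$$\Omega^j := \{x : \Mq(\fplus)(x) > 2^j\}, \qquad \tfrac{s}{s+1}<q<1.$$
Since $\Mq$ is $\Lone$-bounded for $q<1$ (Theorem~\ref{MIT}) and $\fplus \in \Lone$, we have $\mu(\Omega^j)<\infty$, $\Omega^j \ne M$, and $\sum_j 2^j \mu(\Omega^j) \lesssim \|\Mq(\fplus)\|_1 \lesssim \|\fplus\|_1$. Let $\{\underline{\Bij}\}_i$ be a Whitney decomposition of $\Omega^j$, $\{\chi_i^j\}$ the usual partition of unity, and set
$$b_i^j = (f-\cij)\chi_i^j, \qquad \cij = \frac{1}{\chi_i^j(\Bij)}\int f \chi_i^j d\mu, \qquad g^j = f - \sum_i b_i^j.$$
The bound $|\cij|\leq C2^j$ (for $i\in I_x$, $x \in \Omega^j$) from (\ref{calp}) used only $\fplus$ and is unchanged, as is the pointwise bound $|g^j|\leq C2^j$ from (\ref{eqn:g}) and the inequality $|f|\leq\fplus\leq\Mq(\fplus)$.

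Next, as in the proof of Proposition~\ref{AHSN}, I would write the telescoping identity $f=\sum_j(g^{j+1}-g^j)$, with convergence in $\Lone$: as $j\to+\infty$ we use (\ref{conv-in-L1}), where the bound $\int_{\Omega^j}|f|d\mu\to 0$ follows from $|f|\leq \fplus\in\Lone$ and $\bigcap_j\Omega^j=\emptyset$; as $j\to-\infty$ we use (\ref{minusinfty}), which required only $\fplus\in\Lone$ together with $|g^j|\leq C2^j$. Then for each $j$, decompose $g^{j+1}-g^j$ using the moment-free pre-atoms $\elljk$ defined in (\ref{def:elljk}). The crucial point is that the $\Linfty$ estimate (\ref{Linfty}),
$$\|\elljk\|_\infty \leq C 2^j,$$
was derived in the proof of Proposition~\ref{AHSN} using only the pointwise bound $|f|\leq\Mq(\fplus)$ on $F^{j+1}$ and the bounds on $\cij, c_l^{j+1}, c_{k,l}$, none of which require $Nf$. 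Since $\elljk$ is supported in $\Bjkprime$ and satisfies $\int \elljk d\mu=0$, the normalized functions
$$a_k^j := \gamma^{-1}2^{-j}\mu(\Bjkprime)^{-1}\elljk, \qquad \lambda_{j,k} := \gamma 2^j \mu(\Bjkprime),$$
are $\Hone$ $\infty$-atoms for a suitable absolute constant $\gamma$, hence $\Hone$ $t$-atoms for every $1<t\leq\infty$ with the same normalization (by H\"older's inequality), and the constant is independent of $t$. Summing,
$$\sum_{j,k}|\lambda_{j,k}| \lesssim \sum_j 2^j \mu(\Omega^j) \lesssim \int \Mq(\fplus) d\mu \lesssim \|\fplus\|_1,$$
using the disjointness of the $\underline{\Bjk}$ and the $\Lone$-boundedness of $\Mq$.

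The only minor obstacle is verifying that the telescoping sum $f=\sum_{j,k}\lambda_{j,k}a_k^j$ converges in $\Lone$ (and hence represents $f$ in the sense of Definition~\ref{Hone-atoms}); this is handled by the two displayed limits $g^j\to f$ as $j\to\infty$ and $g^j\to 0$ as $j\to-\infty$ above, both of which rely only on $\fplus\in\Lone$. This gives $f\in\Honeato$ with $\|f\|_{\Honeato}\leq C\|\fplus\|_1$ and a constant independent of $t$, completing the proof.
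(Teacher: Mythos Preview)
Your proposal is correct and follows essentially the same route as the paper's own proof: define $\Omega^j=\{\Mq(\fplus)>2^j\}$, run the Calder\'on--Zygmund decomposition of Proposition~\ref{CZN} and the atomic decomposition of Proposition~\ref{AHSN} while discarding all gradient estimates (those being the only ones that needed $Nf$), and use the $L_\infty$ bound (\ref{Linfty}) on the $\elljk$ to produce $\infty$-atoms, hence $t$-atoms for every $t$ with a uniform constant. Your write-up is in fact more explicit than the paper's short paragraph, in particular in spelling out the $\Lone$ convergence of the telescoping sum via (\ref{conv-in-L1}) and (\ref{minusinfty}); one small wording quibble is that the $\elljk$ of (\ref{def:elljk}) are not ``moment-free'' but rather the moment-corrected pre-atoms satisfying $\int\elljk\,d\mu=0$, which is exactly what you use.
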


\begin{proof}  Assuming $\fplus \in \Lone$ and letting
$$\Omega_j=\{x : \Mq(\fplus)(x)>2^j\},
$$
we follow the steps outlined in the proofs of Propositions~\ref{CZN} and~\ref{AHSN},
which use only the maximal function $\fplus$, while ignoring the estimates on the gradients
from the proofs of Proposition~\ref{CZ} and~\ref{AHS}, which are the only ones involving
$Nf$.  From the $L_\infty$ bound (\ref{Linfty}) we are able to obtain atoms satisfying
the conditions of Definition~\ref{Hone-atoms} with $t = \infty$, hence for every other
$t$ with uniform bounds.
\end{proof}

\textbf{\large{Conclusion:}} Let $M$ be a complete Riemannian manifold satisfying $(D)$. Then
\begin{itemize}
\item[1.] for all $\frac{s}{s+1}< q<1$,
$$\tilMone\subset HS_{\qstar,{\rm ato}}^1.$$
\item[2.] If we moreover assume  $(P_1)$, then
$$
\tilMone=\HStato
$$
for all $t>1$.
\end{itemize}

\subsection{Atomic decomposition for the Sobolev space $\Mone$}
\label{sec:Moneone}

For this we need to define new  nonhomogeneous atomic spaces
$\LStato$, where the $L$ is used to indicate that the atoms will now
be in $\Lone$ but not necessarily in $\Hone$. Let us define our
atoms.

 \begin{dfn} \label{HHSANM} For $1<t\leq \infty$, we say that a function $a$ is an $\LStato$-atom if
\begin{itemize}
\item[1.] $a$ is supported in a ball $B$;
\item[2.] $ \|\grad a\|_{t}\leq \mu(B)^{-\frac{1}{t'}}$; and
\item[3.]$\|a\|_1\leq \min(1,r(B))$.
\end{itemize}
We then  say that $f$ belongs to $\LStato$ if there exists a
sequence of $\LStato$-atoms $\{a_{j}\}_{j}$  such that
$f=\sum_{j}\lambda_{j}a_{j}$ in $\Wone$, with
$\sum_{j}|\lambda_{j}|<\infty$. This space is equipped with the norm
$$
\|f\|_{\LStato}=\inf \sum_{j}|\lambda_{j}|,
$$
where the infimum is taken over all such decompositions.
\end{dfn}

\begin{rem} As discussed previously, condition $3$ in Definition~\ref{HHSANM} is a substitute for
the cancellation condition $3$ in Definition~\ref{HHSA}.  Assuming a
Poincar\'e inequality ($P_t$), $\LStato$-atoms corresponding to
small balls (with $r(B)$ bounded above) can be shown (see \cite{D},
Appendix B) to be elements of Goldberg's local Hardy space (defined
by restricting the supports of the test functions in
Definition~\ref{Hone-max} to balls of radii $r < R$ for some fixed $R$ - see \cite{Stein},
Section III.5.17), so that
$\LStato$ is a subset of the ``localized'' space $H_{1,{\rm loc}}$.
\end{rem}

As in the homogeneous case, under the Poincar\'e inequality $(P_1)$,
$\LStato\subset \Mone$:
\begin{prop} Let $M$ be a complete Riemannian manifold satisfying $(D)$ and $(P_1)$.
Let  $1<t\leq \infty$ and $a$  be an $\LStato$-atom. Then $a\in \Mone$ with
$\|a\|_{\Mone}\leq C_t$, the constant $C$ depending only on $t$, the
doubling constant and the constant appearing in $(P_1)$, and
independent of $a$.

Consequently $\LStato\subset \Mone$ with
$$
\|f\|_{\Mone}\leq C_t \|f\|_{\LStato}.
$$
\end{prop}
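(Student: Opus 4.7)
The plan is to mimic the proof of Proposition~\ref{comp1}, observing that condition $3$ of Definition~\ref{HHSANM}, namely $\|a\|_1 \leq \min(1, r(B))$, simultaneously supplies the $\Lone$ bound required by the nonhomogeneous norm and the substitute-for-cancellation condition $3'$ of Remarks~\ref{atom-rems} used there in the tail estimate of $Na$.

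First I would fix an $\LStato$-atom $a$ supported in $B_0 = B(x_0, r_0)$ and show $\|a\|_{\Mone} = \|a\|_1 + \|Na\|_1 \leq C_t$. The bound $\|a\|_1 \leq 1$ is immediate from condition $3$. For $\|Na\|_1$, I would split the integral into $\int_{2B_0}$ and $\int_{M\setminus 2B_0}$. On $2B_0$, $(P_1)$ gives the pointwise estimate $Na(x) \leq C\cM(|\grad a|)(x)$, and H\"older combined with the $L_t$-boundedness of $\cM$ and condition $2$ yields $\int_{2B_0} Na \, d\mu \leq C_t$, exactly as in (\ref{doubleball}). On $M \setminus 2B_0$, for $x \in C_k(B_0)$ and any ball $B \ni x$ meeting $B_0$, the estimate (\ref{mcu}) gives $\tfrac{1}{r(B)}\aver{B}|a - a_B|\,d\mu \leq \tfrac{3}{r(B)\mu(B)}\|a\|_1$. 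Applying (\ref{BB1}) together with the key bound $\|a\|_1 \leq r_0$ (a consequence of $\min(1,r_0) \leq r_0$), one finds $Na(x) \leq C\,8^s\,2^{1-k}/\mu(2^{k+1}B_0)$, and summing over $k$ as in Proposition~\ref{comp1} yields $\int_{M\setminus 2B_0} Na\,d\mu \leq C$.

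To pass from a single atom to the atomic sum $f = \sum_j \lambda_j a_j$, I would use that convergence in $\Wone$ entails $\Lone$ convergence, so on a subsequence $f_N := \sum_{j \leq N} \lambda_j a_j \to f$ pointwise $\mu$-a.e., with $(f_N)_B \to f_B$ for every ball $B$ by dominated convergence. Sublinearity of $N$ together with Fatou's lemma then give $Nf(x) \leq \liminf_N Nf_N(x) \leq \sum_j |\lambda_j|\,Na_j(x)$ $\mu$-a.e.; integrating yields $\|Nf\|_1 \leq C_t \sum_j |\lambda_j|$, and combined with $\|f\|_1 \leq \sum_j |\lambda_j|$ and the infimum over atomic decompositions this gives $\|f\|_{\Mone} \leq C_t \|f\|_{\LStato}$. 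Since the whole argument is essentially a direct translation of Proposition~\ref{comp1}, I do not foresee any real obstacle; the only subtlety is the observation that condition $3$ in Definition~\ref{HHSANM} encodes both the size bound and a substitute for cancellation at once.
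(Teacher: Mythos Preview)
Your proposal is correct and follows essentially the same approach as the paper: the paper's proof simply refers back to Proposition~\ref{comp1} via Remark~\ref{remcomp1}, noting that condition $3$ of Definition~\ref{HHSANM} supplies both the $\Lone$ bound and the substitute cancellation condition $3'$ --- exactly the observation you articulate. Your limiting argument for the atomic sum (via $\Lone$ convergence, a.e.\ subsequence, and a Fatou-type step for $N$) is a minor variant of the paper's route through completeness of $\dotMone$, but equally valid.
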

\begin{proof} The proof follows analogously to that of Proposition \ref{comp1},
noting that we can use Remark \ref{remcomp1} thanks to property 3 in
Definition~\ref{HHSANM}, and that this property also implies every
atom $a$ is in $\Lone$.
\end{proof}

Now for the converse, that is, to prove that $\Mone\subset \LStato$,
we  again establish  an atomic decomposition  for functions $f\in
\Mone$.  In order to do that we must introduce an equivalent 
maximal function $\fstar$, which is a variant of the one
originally defined by Calder\'on \cite{calderon} and denoted by
$N(f,x)$ (here we are only defining it in the special case $q = 1$
and $m=1$, where for $x$ a Lebesgue point of $f$, the constant
$P(x,y)$ in Calder\'on's definition is equal to $f(x)$, and we are
allowing for the balls not to be centered at $x$):

\begin{dfn} Let $f\in \Loneloc(M)$.  Suppose $x$ is a Lebesgue point of
$f$, i.e.
$$\lim_{r \ra 0}\;\ \aver{B(x,r)}|f(y)-f(x)| d\mu(y) = 0.
$$
We define 
$$
\fstar(x):=\sup_{B: \,x\in B}\frac{1}{r(B)}\aver{B}|f(y)-f(x)| d\mu(y).
$$
Then $\fstar$ is defined $\mu$-almost everywhere.
\end{dfn}

We now show the equivalence of $\fstar$ and $Nf$.
As discussed in the Introduction, the following Proposition was proved in \cite{devsha} (see also \cite{miyachi})
in the Euclidean case:
\begin{prop} \label{molN}
Let $M$ be a complete  Riemannian manifold satisfying $(D)$. Then, there exist constants $C_1,\, C_2$
such that for all $f\in \Loneloc(M)$
 $$
 C_1 Nf\leq \fstar\leq C_2 Nf
 $$
pointwise $\mu$-almost everywhere.
\end{prop}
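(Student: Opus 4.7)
The plan is to prove the two pointwise inequalities separately, with the easy direction being $Nf \leq 2\fstar$ and the harder one being $\fstar \leq C\, Nf$.

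For the easy direction, fix a ball $B$ containing a Lebesgue point $x$. By the triangle inequality applied twice,
\begin{equation*}
\aver{B}|f(y)-f_B|\, d\mu(y) \leq \aver{B}|f(y)-f(x)|\, d\mu(y) + \aver{B}|f(x)-f(z)|\, d\mu(z) \leq 2\aver{B}|f(y)-f(x)|\, d\mu(y).
\end{equation*}
Dividing by $r(B)$ and taking the supremum over all balls $B \ni x$ gives $Nf(x) \leq 2\fstar(x)$ at every Lebesgue point, hence $\mu$-a.e.

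For the harder direction, I would split the integrand as $|f(y)-f(x)| \leq |f(y)-f_B| + |f_B - f(x)|$. The first term, averaged over $B$, is at most $r(B)\, Nf(x)$ directly from the definition of $Nf$. The main task is to control $|f_B - f(x)|$ in terms of $r(B)\, Nf(x)$. For this I would use a telescoping argument. Since $B$ need not be centered at $x$, first compare $f_B$ to $f_{\widetilde B_0}$ where $\widetilde B_0 = B(x, 2r(B))$: because $B \subset \widetilde B_0$ and $\mu(\widetilde B_0) \leq C\mu(B)$ by $(D)$,
\begin{equation*}
|f_B - f_{\widetilde B_0}| \leq \aver{B}|f - f_{\widetilde B_0}|\, d\mu \leq \frac{\mu(\widetilde B_0)}{\mu(B)}\aver{\widetilde B_0}|f - f_{\widetilde B_0}|\, d\mu \leq C\, r(B)\, Nf(x).
\end{equation*}
Then, setting $\widetilde B_k = B(x, 2^{1-k}r(B))$, doubling gives $\mu(\widetilde B_k) \leq C\mu(\widetilde B_{k+1})$, so since $x \in \widetilde B_k$,
\begin{equation*}
|f_{\widetilde B_{k+1}} - f_{\widetilde B_k}| \leq \aver{\widetilde B_{k+1}}|f - f_{\widetilde B_k}|\, d\mu \leq C \aver{\widetilde B_k}|f - f_{\widetilde B_k}|\, d\mu \leq C\, 2^{-k}r(B)\, Nf(x).
\end{equation*}
Summing the geometric series and using the Lebesgue point property $f_{\widetilde B_k} \to f(x)$, I get $|f_{\widetilde B_0} - f(x)| \leq C\, r(B)\, Nf(x)$, and combining with the previous estimate yields $|f_B - f(x)| \leq C\, r(B)\, Nf(x)$.

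Putting everything together, for every Lebesgue point $x$ and every ball $B \ni x$,
\begin{equation*}
\frac{1}{r(B)}\aver{B}|f(y)-f(x)|\, d\mu(y) \leq \frac{1}{r(B)}\aver{B}|f(y)-f_B|\, d\mu(y) + \frac{|f_B - f(x)|}{r(B)} \leq C\, Nf(x),
\end{equation*}
so taking the supremum gives $\fstar(x) \leq C\, Nf(x)$. The only real obstacle is the careful handling of the off-center ball $B$, which is managed by the intermediate step through $\widetilde B_0 = B(x, 2r(B))$; the rest of the telescoping is standard once the doubling property is in place.
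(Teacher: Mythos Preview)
Your proof is correct and follows essentially the same approach as the paper: both directions are argued identically in spirit, with the easy bound $Nf\leq 2\fstar$ coming from a triangle inequality and the bound $\fstar\leq C\,Nf$ reduced to estimating $|f_B-f(x)|$ via a dyadic telescoping over balls centered at $x$, using doubling. The only cosmetic difference is that the paper telescopes upward from a small ball $B_n=B(x,r_n)$ to one containing $B$ and then lets $n\to\infty$, whereas you telescope downward from $\widetilde B_0=B(x,2r(B))$ directly to the Lebesgue point; your version is slightly more streamlined but not materially different.
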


\begin{proof} Let $f\in \Loneloc$ and  $x$ be a Lebesgue point of $f$, so that there exists a
sequence of balls $B_n = B(x,r_n)$ with $r_n \ra 0$ and
$f_{B_n} \ra f(x)$. Given a ball $B$ containing $x$, take $n$ sufficiently large so that
$B_n \subset B$.
Since $x \in B$, there is a smallest $k \geq 1$ such that
$2^k B_n = B(x,2^kr_n) \supset B$, and for this $k$ we have $2^k r_n \leq 4 r(B)$, so
\begin{align*}
|f_B-f_{B_n}(x)|
&\leq \aver{B}|f - f_{2^kB_n}|d\mu + \sum_{j = 1}^k |f_{2^jB_n} - f_{2^{j-1}B_n}|\\
&\leq \frac{\mu(2^kB_n)}{\mu(B)}\aver{2^kBn}|f- f_{2^kB_n}|d\mu  + \sum_{j = 1}^k \frac{\mu(2^jB_n)}{\mu(2^{j-1}B_n)} \aver{2^jB_n} |f-f_{2^jB_n}|d\mu
\\
&\leq 2C_{\rm{(D)}}^2  \sum_{j = 1}^k 2^j r_n Nf(x)
\\
&\leq 16C_{\rm{(D)}}^2 r(B) Nf(x).
\end{align*}
Taking the limit as $n \ra \infty$, we see that $|f_B - f(x)| \leq C
r(B) Nf(x)$ so that
$$
\aver{B}|f(y)-f(x)|d\mu(y) \leq
\aver{B}|f(y)-f_B|d\mu(y)+|f_B-f(x)| \leq Cr(B) Nf(x).
$$
Dividing by $r(B)$ and taking the supremum over all balls $B$
containing $x$, we conclude that
$\fstar(x)\leq C Nf(x)$.

For the converse, again take any Lebesgue point $x$ and let $B$ be a ball
containing $x$.  Writing $|f(y) - f_B|
\leq |f(y) - f(x)| + |\aver{B}f - f(x)|$, we have
$$
\aver{B}|f(y)-f_B|d\mu(y)
\leq  2\aver{B} |f(y)-f(x)|d\mu(y) \leq  2 r(B)  \fstar(x).
$$
Taking the supremum over all balls $B$ containing $x$, we deduce
that $ Nf(x) \leq 2\fstar(x)$.
\end{proof}

\begin{prop}[Calder\'on-Zygmund decomposition] \label{CZNM} Let $M$ be a complete
Riemannian manifold satisfying $(D)$. Let $f\in \Mone$, $\frac{s}{s+1}< q<1$
and $\alpha >0$. Then one can find a collection of balls $\{B_{i}\}_{i}$, functions
$b_{i}\in \Wone$ and a Lipschitz function $g$ such that the following properties
hold:
\begin{equation*}
f = g+\sum_{i}b_{i},
\end{equation*}
\begin{equation}
|g(x)|+ |\grad g(x)|\leq C\alpha\quad \mbox{for} \; \mu-a.e\; x\in M, \label{egnm}
\end{equation}
\begin{equation}
\supp b_{i}\subset B_{i}, \,\|b_{i}\|_1\leq C\alpha \mu(B_i)r_i,\, \|\,b_i+ |\grad b_{i}|\,\|_{q}\leq C\alpha\mu(B_{i})^{\frac{1}{q}}, \label{ebnm}
\end{equation}
\begin{equation}
\sum_{i}\mu(B_{i})\leq \frac{C_q}{\alpha}\int (|f|+ Nf) d\mu,
\label{eBn}
\end{equation}
\begin{equation}
\mbox{and} \quad \sum_{i}\chi_{B_{i}}\leq K \label{rbnm}.
\end{equation}
The constants $C$ and $K$ only depend on the constant in $(D)$.
\end{prop}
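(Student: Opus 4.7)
The plan is to adapt the proof of Proposition~\ref{CZN} almost verbatim, replacing the grand maximal function $\fplus$ (which was used there to control $g$ in the $\tilMone$ setting) by the Hardy--Littlewood maximal function $\cM$ applied to $|f|$ itself. The working hypotheses are $f,Nf\in\Lone$, together with the Sobolev--Poincar\'e inequality (\ref{PNq}) and the pointwise bound $|\grad f|\leq CNf$ from Proposition~\ref{nabN}.

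First I would define
$$\Omega=\{x\in M:\Mq(|f|+Nf)(x)>\alpha\}.$$
As in (\ref{mO}), the $L_{1/q}$-boundedness of $\cM$ (since $1/q>1$) combined with $|f|,Nf\in\Lone$ yields $\mu(\Omega)\leq(C_q/\alpha)\int(|f|+Nf)\,d\mu$, which accounts for (\ref{eBn}). If $\Omega=\emptyset$, take $g=f$: at Lebesgue points one has $|f|\leq\Mq(|f|)\leq\alpha$ and $|\grad f|\leq Nf\leq\Mq(Nf)\leq\alpha$ $\mu$-a.e., so (\ref{egnm}) holds with all $b_i=0$. Otherwise, perform the same Whitney decomposition $\{\underline{B_i}\}$ of $\Omega$ with partition of unity $\{\chi_i\}$, and set $c_i=\chi_i(B_i)^{-1}\int_{B_i}f\chi_i\,d\mu$, $b_i=(f-c_i)\chi_i$, and $g=f-\sum_i b_i$ exactly as in Proposition~\ref{CZ}.

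The estimates (\ref{cij1})--(\ref{gradbiq}) for $\|b_i\|_1$ and $\|\grad b_i\|_q$ transfer without change, since they only use $\Mq(Nf)\leq\alpha$ on $F$ together with (\ref{PNq}); this delivers the first two bounds in (\ref{ebnm}) and the bounded overlap (\ref{rbnm}). The new ingredient is $\|b_i\|_q\leq C\alpha\mu(B_i)^{1/q}$. For this, pick $y_0\in\Bibar\cap F$ (available by the third Whitney property) and use doubling plus Jensen's inequality for $q<1$:
$$\aver{B_i}|f|^q\,d\mu\leq C\aver{\Bibar}|f|^q\,d\mu\leq C\Bigl(\aver{\Bibar}|f|\,d\mu\Bigr)^{q}\leq C(\cM(|f|)(y_0))^q\leq C\alpha^q.$$
The same estimate gives $|c_i|\leq C\aver{\Bibar}|f|\,d\mu\leq C\alpha$, and combining these two bounds controls $\|b_i\|_q\leq\|f\ind_{B_i}\|_q+|c_i|\mu(B_i)^{1/q}$.

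Finally, to prove (\ref{egnm}) I would split $g=f\ind_F+\sum_i c_i\chi_i$ as in the previous proof. On $F$, Lebesgue differentiation together with the pointwise bounds $|f|\leq\Mq(|f|)$ and $Nf\leq\Mq(Nf)$ gives $|f|\leq\alpha$ and, via Proposition~\ref{nabN}, $|\grad f|\leq\alpha$ $\mu$-a.e. On $\Omega$, $|\sum_i c_i\chi_i|\leq CK\alpha$ follows from $|c_i|\leq C\alpha$ and the bounded overlap, while $|\grad g|\leq CK\alpha$ comes from writing $\grad g=\ind_F\grad f-h$ with $h=\sum_i(f-c_i)\grad\chi_i$, so that (\ref{relim})--(\ref{h}) apply verbatim (they only invoked $\Mq(Nf)$). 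The only point of genuine deviation from Proposition~\ref{CZN} is thus the substitution of $\fplus$-based control on $c_i$ and on $f|_F$ by $\cM(|f|)$-based control, which is responsible for the appearance of $\int|f|\,d\mu$ in (\ref{eBn}); I do not expect any real obstacle beyond verifying that this substitution propagates cleanly through the $L_q$ estimate for $b_i$.
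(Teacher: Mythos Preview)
Your argument has a small but genuine gap in the bound $|c_i|\leq C\alpha$. With $\Omega=\{\Mq(|f|+Nf)>\alpha\}$, a point $y_0\in F\cap\Bibar$ only satisfies $\Mq(|f|)(y_0)\leq\alpha$, i.e.\ $\bigl(\aver{\Bibar}|f|^q\bigr)^{1/q}\leq\alpha$. For $q<1$ this does \emph{not} imply $\aver{\Bibar}|f|\leq\alpha$ (Jensen goes the other way: the $L^q$ average is dominated by the $L^1$ average on a probability space, not conversely), so the step $\cM(|f|)(y_0)\leq\alpha$ in your chain does not follow. Without $|c_i|\leq C\alpha$ you lose both the $\|b_i\|_q$ estimate and the bound $|g|\leq C\alpha$ on $\Omega$. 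The fix is easy: replace $\Mq(|f|)$ by $\cM(|f|)$ in the definition of $\Omega$ (keeping $\Mq(Nf)$ for the gradient part), and use the weak $(1,1)$ bound for $\cM$ to recover~(\ref{eBn}).

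With that correction your proof works, but the paper takes a genuinely different route. It keeps $\Omega=\{\Mq(|f|+\fstar)>\alpha\}$ (with $\fstar\sim Nf$ the Calder\'on-type maximal function of Proposition~\ref{molN}) but abandons the weighted average and instead sets $c_i=f(x_i)$ for a carefully chosen Lebesgue point $x_i\in\Bibar$ at which both $|f(x_i)|\leq 2\alpha$ and $\fstar(x_i)\leq c_q\alpha$. This gives $|c_i|\leq 2\alpha$ by construction, and the $\|b_i\|_1$ bound comes directly from the definition of $\fstar$ via $\int_{B_i}|f-f(x_i)|\,d\mu\leq C\mu(B_i)\,r_i\,\fstar(x_i)$, bypassing the Sobolev--Poincar\'e inequality~(\ref{PNq}) entirely; the estimate on $h$ likewise uses $|f(x_k)-f(x_i)|\lesssim r_k(\fstar(x_k)+\fstar(x_i))$ rather than~(\ref{relim}). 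Your approach stays closer to the template of Propositions~\ref{CZ} and~\ref{CZN}, which is natural and arguably more economical; the paper's point-value choice avoids any tension between $\cM$ and $\Mq$ and makes the role of $\fstar$ explicit.
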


\begin{proof}
The proof follows the same  steps as that of Propositions \ref{CZ}
and \ref{CZN}. Again we will only mention the changes that occur.
Let $f\in \Mone$,  $\frac{s}{s+1}<q<1$ and $\alpha >0$.  By
Proposition~\ref{molN}, we have $\fstar \in \Lone$ with norm
equivalent to $\|Nf\|_1$.  Thus if we consider the open set
$$
\Omega=\{x:\, \Mq(|f|+\fstar)(x)>\alpha\},
$$
its Whitney decomposition $\{B_i\}_i$, and the corresponding partition of unity $\{\chi_i\}_i$,
we get immediately (\ref{rbnm}) and (\ref{eBn})
by the bounded overlap property and the boundedness of the maximal function in $L_{1/q}$.

We again define $b_i = (f-c_i)\chi_i$ but this time we set
$c_i=f(x_i)$
for some $x_i \in \Bibar$ chosen as follows.  Recall that $\Bibar = 4B_i$ contains
some point $y$ of $F = M \setminus \Omega$ so that
\begin{equation}
\label{averBibar}
\aver{\Bibar} |f|^q \leq \Mq(f)^q(y) \leq \alpha^q
\end{equation}
as well as
\begin{equation}
\label{averBibarfstar}
\aver{\Bibar} (\fstar)^q \leq \Mq(\fstar)^q(y) \leq \alpha^q.
\end{equation}
Let
$$E_i = \{x \in \Bibar: x \mbox{ is a Lebesgue point of $f$ and $|f|^q$, and } |f(x)| \leq 2\alpha\}.$$
We claim that
$$\mu(E_i) \geq (1 - 2^{-q}) \mu(\Bibar).$$
Otherwise we would have $\mu(\Bibar \setminus E_i) > 2^{-q} \mu(\Bibar)$ and
so, since $f$ and $|f|^q$ are locally integrable and the set of points which are not their
Lebesgue points has measure zero,
$$\int_{\Bibar \setminus E_i} |f|^q \geq (2\alpha)^q\mu(\Bibar \setminus E_i) > \alpha^q\mu(\Bibar),$$
contradicting (\ref{averBibar}).

Now we claim that for an appropriate constant $c_q$ (to be chosen independent of $i$ and $\alpha$),
there exists a point $x_i \in E_i$ with
\begin{equation}
\label{fstarxi}
\fstar(x_i) \leq c_q\alpha.
\end{equation}
Again, suppose not.  Then we have,
by (\ref{averBibarfstar}),
$$(c_q\alpha)^q\mu(E_i) \leq \int_{E_i} (\fstar)^q d\mu \leq \alpha^q\mu(\Bibar),$$
implying that $\mu(E_i) \leq c_q^{-q}\mu(\Bibar)$.  Taking $c_q > (1 - 2^{-q})^{-1/q}$,
we get a contradiction.

Thanks to our choice of $x_i$, we now have
$$|c_i| = |f(x_i)| \leq 2\alpha$$
and
$$
\|b_i\|_1 \leq C\int_{B_i}|f(y)-f(x_i)| d\mu(y)
\leq C\mu(B_{i})r_i \fstar(x_i)
\leq C c_q r_i \alpha \mu(B_i).
$$
Moreover for $\|b_i\|_q$, one has, by (\ref{averBibar}),
$$
 \|b_i\|_q \leq C \left( \int_{B_i} |f-c_i|^qd\mu\right)^{\frac{1}{q}}
 \leq C\left(\int_{B_i}|f|^qd\mu\right)^{\frac{1}{q}}+C2\alpha\mu(B_i)^{\frac{1}{q}}
\leq C\alpha \mu(B_i)^{\frac{1}{q}}.
$$
Finally, for $\grad b_i$, we can estimate the $L_1$ norm by
\begin{align}
\|\grad b_i \|_1 &\leq  \|(f-c_{i})\grad \chi_i| \|_1+\|(\grad f) \chi_i \|_1
\nonumber \\
&\leq  \int_{B_{i}}|f(x)-f(x_i)| |\grad\chi_{i}(x)|d\mu(x) +\int_{B_i}|\grad f| d\mu
\nonumber\\
&\leq C\mu(B_i)\fstar(x_i)+\int_{B_i}|\grad f| d\mu
\nonumber\\
&\leq C c_q \alpha \mu(B_i) + \int_{B_i}|\grad f| d\mu,
\label{gradbi1'}
\end{align}
showing (since $|\grad f|$ in $L_1$ by Proposition~\ref{nabN}) that $b_i \in \Wone$,
and the $L_q$ norm by
\begin{align*}
\|\grad b_i \|^q_q &\leq  \|(f-c_{i})\grad \chi_i| \|^q_q+\|(\grad f) \chi_i \|_q^q
\\
&\leq  \mu(B_i)^{1-q}\left(\int_{B_{i}}|f(x)-f(x_i)| |\grad\chi_{i}(x)|d\mu(x)\right)^{q}+\int_{B_i}|\grad f|^qd\mu
\\
&\leq C\mu(B_i)\fstar(x_i)^q+\int_{\Bibar}|Nf|^qd\mu\\
&\leq C(c_q \alpha)^q\mu(B_i) +\int_{\Bibar}|\fstar|^qd\mu
\\
&\leq C \alpha^q \mu(B_i),
\end{align*}
where we used Propositions~\ref{nabN} and~\ref{molN}, and (\ref{averBibarfstar}).
Taking the $1/q$-th power on both sides, we get (\ref{ebnm}).

It remains to prove (\ref{egnm}). First note that $\|g\|_{\infty}\leq C\alpha$ since
$$
g=f\ind_{F}+\sum_i c_i \chi_i
$$
and for the first term, by the Lebesgue differentiation theorem, we
have $|f\ind_F| \leq  \Mq(f)\ind_F \leq \alpha$ $\mu$-a.e., while
for the second term, thanks to the bounded overlap property and
$|c_i| \leq 2\alpha$, we get the desired estimate.

Now for the gradient, we write, as in (\ref{gradg}),
$$\grad g = \ind_{F}(\grad f) -\sum_{i}(f-f(x_i))\grad\chi_{i}.
$$
Again we have, by Propositions~\ref{nabN} and~\ref{molN}, that $\ind_{F}(|\grad f|)
\leq C\ind_{F}(Nf) \leq C\ind_{F}(\fstar) \leq C\alpha\;\mu -$a.e. Let
$$h=\sum_{i}(f-f(x_i))\grad\chi_{i}.$$
We will show $|h(x)|\leq C\alpha$ for all $x\in M$. Note first that
the sum defining $h$ is locally finite on $\Omega$ and vanishes on
$F$. Then take  $x\in \Omega$ and a Whitney ball $B_{k}$ containing
$x$. As before, since $\displaystyle \sum_{i}\grad\chi_{i}(x)=0$, we
can replace $f(x)$ in the sum by any constant so
$$h(x)=\sum_{i\in I_x} \left(f(x_k) -f(x_i) \right)\grad
\chi_{i}(x).$$
Recall that for all $i,k\in I_{x}$, by the construction of  the Whitney collection, the balls $B_i$ and $B_k$ have equivalent radii and  $B_i \subset 7B_k$.
Thus
\begin{align}
\label{fxk-fxi}
|f(x_k)-f(x_i)| & \leq |f_{7B_k} - f(x_k)| +  |f_{7B_k} - f(x_i)|\\
& \leq \aver{7B_k}|f - f(x_k)| d\mu + \aver{7B_k}|f - f(x_i)| d\mu \nonumber \\
& \leq 7r_k (f^\star(x_k) + f^\star(x_i)) \leq 14 r_k c_q \alpha,\nonumber
\end{align}
by (\ref{fstarxi}).
Therefore we again get the estimate (\ref{h}).

\end{proof}
\begin{prop} \label{AHSN2}Let $M$ be a complete Riemannian manifold satisfying $(D)$. Let $f\in \Mone$. Then for all $\frac{s}{s+1}< q<1$,  there is a sequence of  $LS^{1}_{\qstar,{\rm ato}}$-atoms $\{a_j\}_j$ ($\qstar=\frac{sq}{s-q}$),  as in Definition \ref{HHSANM},
and a sequence of scalars $\{\lambda_j\}_j$,  such that $$f=\sum_j \lambda_j a_j \quad \mbox{ in } \Wone,
 \mbox{ and } \quad
\sum |\lambda_j| \leq C_q \|f\|_{\Mone}.$$
Consequently, $\Mone\subset HS^1_{\qstar, ato}$ with
$\|f\|_{LS_{\qstar,{\rm ato}}^1}\leq C_q \|f\|_{\Mone}$.
\end{prop}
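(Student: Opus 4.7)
The proof will follow the same telescoping-sum scheme used in Propositions \ref{AHS} and \ref{AHSN}, now based on the new Calder\'on-Zygmund decomposition provided by Proposition \ref{CZNM}. For each $j \in \mathbb{Z}$, apply Proposition \ref{CZNM} at height $\alpha = 2^j$ with
$$\Omega_j = \{x : \Mq(|f|+\fstar)(x) > 2^j\}$$
to obtain $f = g^j + \sum_i b_i^j$ with $\|g^j\|_\infty + \|\grad g^j\|_\infty \leq C 2^j$. Writing $\ell^j := g^{j+1} - g^j$, I would first establish that $f = \sum_j \ell^j$ converges in $\Wone$: the gradient convergence is identical to that in the proof of Proposition \ref{AHSN}, using $\sum_j 2^j \mu(\Omega_j) \lesssim \|f\|_{\Mone}$ and $|\grad f| \in \Lone$. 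For $\Lone$ convergence, as $j \to +\infty$ the trivial bound (\ref{bi-trivial}) gives $\|g^j - f\|_1 \lesssim \int_{\Omega_j} |f|\, d\mu \to 0$ since $\bigcap_j \Omega_j$ is $\mu$-null and $f \in \Lone$, while as $j \to -\infty$ the uniform bound $\|g^j\|_\infty \leq C 2^j$ yields
$$\|g^j\|_1 \leq \int_{F_j} |f|\, d\mu + C 2^j \mu(\Omega_j) \to 0$$
by dominated convergence and the summability of $2^j \mu(\Omega_j)$.

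Because $\LStato$-atoms impose no vanishing moment, the natural pre-atoms are simply $\ell_k^j := \ell^j \chi_k^j$, where $\{\chi_k^j\}$ is the partition of unity subordinate to the Whitney cover of $\Omega_j$; no $c_{k,l}$ correction is needed. The support of $\ell_k^j$ lies in $\Bjkprime := (1+2c)\Bjk$ for a suitable $c$, exactly as in the proof of Proposition \ref{AHS}. For the gradient, splitting $\grad(\ell^j \chi_k^j) = (\grad \ell^j)\chi_k^j + \ell^j \grad \chi_k^j$, the first piece is controlled by $\|\grad \ell^j\|_\infty \leq C 2^{j+1}$ from Proposition \ref{CZNM}, and the second is handled by the same computation as (\ref{ljgradchi})--(\ref{ljgradchinorm}) from the proof of Proposition \ref{AHS}, yielding $\|\grad \ell_k^j\|_{\qstar} \leq C 2^j \mu(\Bjkprime)^{1/\qstar}$.

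The main new point, distinguishing this from Propositions \ref{AHS} and \ref{AHSN}, is condition 3 of Definition \ref{HHSANM}, namely $\|a_k^j\|_1 \leq \min(1, r(\Bjkprime))$; I expect this to be the main obstacle. I would derive two competing bounds for $\|\ell_k^j\|_1$ and take their minimum. The $L_\infty$ estimate $\|\ell^j\|_\infty \leq C 2^{j+1}$ from Proposition \ref{CZNM} immediately gives $\|\ell_k^j\|_1 \leq C 2^j \mu(\Bjk)$, which handles the regime of large balls. For the refined bound $\|\ell_k^j\|_1 \leq C 2^j r_k^j \mu(\Bjkprime)$, needed when $r_k^j \leq 1$, I would expand $\ell^j = \sum_i b_i^j - \sum_l b_l^{j+1}$, restrict each sum to the balls that actually meet $\Bjk$ (whose radii are comparable to $r_k^j$ and whose supports lie in $\Bjkprime$ by the Whitney properties), and apply the $\Lone$ bound $\|b_i^j\|_1 \leq C 2^j r_i^j \mu(B_i^j)$ from Proposition \ref{CZNM}, together with the bounded overlap of the Whitney balls. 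Setting $\lambda_{j,k} := \gamma 2^j \mu(\Bjkprime)$ and $a_k^j := \lambda_{j,k}^{-1} \ell_k^j$ for $\gamma$ large enough to absorb the various constants, the $a_k^j$ become genuine $\LStato$-atoms, and
$$\sum_{j,k} |\lambda_{j,k}| \lesssim \sum_j 2^j \mu(\Omega_j) \lesssim \int \Mq(|f|+\fstar)\, d\mu \lesssim \|f\|_{\Mone},$$
exactly as at the end of the proof of Proposition \ref{AHS}, using Proposition \ref{molN} to pass between $\fstar$ and $Nf$.
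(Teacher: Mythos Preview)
Your overall scheme matches the paper's: both use the Calder\'on--Zygmund decomposition of Proposition~\ref{CZNM}, take $\elljk = \ell^j\chi_k^j$ as pre-atoms without any moment correction, and normalize by $\lambda_{j,k} = \gamma 2^j \mu(\Bjkprime)$. Your treatment of the $\Wone$ convergence and of condition~3 in Definition~\ref{HHSANM} (via the two competing $\Lone$ bounds on $\elljk$) is correct, and in fact more explicit than the paper, which handles the bound $\|a\|_1 \leq r(B)$ only by a back-reference to Remark~\ref{rem-otheratoms} and estimate~(\ref{cij1}).

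There is, however, one genuine imprecision. You assert that $\ell^j\grad\chi_k^j$ is ``handled by the same computation as (\ref{ljgradchi})--(\ref{ljgradchinorm}).'' That computation in Proposition~\ref{AHS} relies on the constants $\cij$ being \emph{integral averages}, so that (\ref{PNq}) and (\ref{faze}) directly control $\int_{\Bij}|f-\cij|^{\qstar}$. In Proposition~\ref{CZNM} the constants are instead \emph{point values} $\cij = f(x_i^j)$, and the estimate does not follow verbatim: one must split $|f-\cij| \leq |f - f_{\Bjk}| + |f_{\Bjk} - \cjk| + |\cjk - \cij|$ and then invoke the special properties (\ref{fstarxi}) and (\ref{fxk-fxi}) of the carefully chosen points $x_i^j$ (namely $\fstar(x_i^j) \leq c_q 2^j$) for the last two terms, with (\ref{PNq}) together with Proposition~\ref{molN} handling the first. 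This is exactly what the paper does, and it is the reason the selection of $x_i^j$ in the proof of Proposition~\ref{CZNM} was made with such care. Once you supply this modification, your argument is complete and essentially identical to the paper's.
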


\begin{proof} Here as well we will only mention the additional properties that one should verify in comparison with  Proposition \ref{AHS} and \ref{AHSN}.
We use the Calder\'on-Zygmund decomposition (Proposition~\ref{CZNM}) above with $\Omega^j$ corresponding to
$\alpha = 2^j$, and denote the resulting functions by $g^j$ and $b_i^j$, recalling that for the definition
of the constant $\cij$ we have $\cij = f(x_i^j)$ for a specially chosen point $x_i^j \in \Bijbar$.

First let us see that  $g^j \rightarrow f$ in $W_1^1$. For the
convergence in $\Lone$ we just repeat (\ref{conv-in-L1}) and
(\ref{minusinfty}) from the nonhomogeneous case, replacing $\fplus$
by $|f|$. For the convergence in $\dot{W}_1^1$, we can estimate
$\sum_i\|\grad b_i^j\|_1$ exactly as in (\ref{gradbij}), using (\ref{gradbi1'}) instead
of (\ref{gradbi1}), and replacing
$Nf$ by $\fstar$ and $\Mq(Nf)$ by $\Mq(|f| + \fstar)$. This gives
$\grad g^j \rightarrow \grad f$ in $\Lone$ as $j \ra \infty$.  For
the convergence of $\grad g^j$ to $0$ as $j \ra -\infty$, we imitate
(\ref{gradgj}) and (\ref{gradgj1}), using (\ref{gradg}) and
(\ref{h}) with $\fstar$ and our new choice of $\cij$.

We define the functions $\ell^j = g^{j+1} - g^j$ as in Proposition \ref{AHS} but this time
we just use
$$\elljk := \ell^j\chi^j_k$$
for the ``pre-atoms'', since we no longer need to have the moment condition $\int \elljk = 0$
(see Remark~\ref{rem-otheratoms}).  From the $L_\infty$ bounds (\ref{egnm}) on $g^j$ and $\grad g^j$ in
Proposition \ref{CZNM}, we immediately get
$$\|\elljk\|_{1} \leq C2^j\mu(\Bjk)$$and
$\|\,|\grad \ell^j|\chi_k^j\|_{\qstar}\leq C 2^j
\mu(\Bjk)^{1/\qstar}$.  We need a similar estimate on $\|\ell^j|\grad \chi_k^j| \|_{\qstar}$ in
order to bound $\|\grad \elljk\|_{\qstar}$.
As in (\ref{gradelljk}), write
$$
r_k^j \; \left(\aver{\Bjk}| \ell^j\grad \chi_k^j|^{\qstar}d\mu\right)^{1/\qstar}
 \leq C \left(\aver{\Bjk}\Big(\sum_{i} \ind_{B_{i}^j}|f-\cij|
+\sum_{l}
\ind_{B_{l}^{j+1}}|f-c_{l}^{j+1}|\Big)^{\qstar} d\mu\right)^{1/\qstar}
$$
Expanding $|f - \cij| =  |f- f_{\Bjk} + f_{\Bjk} - \cjk + \cjk - \cij|$ and using the bounded
overlap property of the balls, the Sobolev-Poincar\'e inequality (\ref{PNq}), Proposition~\ref{molN},
and properties (\ref{fstarxi}) and (\ref{fxk-fxi}) of the constants $\cij = f(x^j_i)$, we have
for the integral of the first sum on the right-hand-side:
\begin{align*}
\left(\aver{\Bjk}\Big(\sum_{i} \ind_{B_{i}^j}|f-\cij|\Big)^{\qstar} d\mu\right)^{1/\qstar}
&\leq K \left(\aver{\Bjk}|f- f_{\Bjk}|^{\qstar} d\mu \right)^{1/\qstar}  + K|f_{\Bjk} - \cjk| \\
& + \left(\aver{\Bjk}\Big(\sum_{\Bij \cap \Bjk \neq \emptyset}
\ind_{\Bij}|\cjk - \cij| \Big)^{\qstar} d\mu \right)^{1/\qstar} \\
& \leq CK r^j_k \left(\aver{\Bjkbar} (Nf)^q\right)^{1/q} + K r^j_k \fstar(x^j_k) + CK r^j_k 2^j\\
& \leq CK r^j_k 2^j.
\end{align*}
The analogous estimate holds for the integral of the second sum, in $l$, since as pointed out
previously, when $B_l^{j+1} \cap \Bjk \neq \emptyset$ we have that $r_l^{j+1} \leq cr^j_k$.
This gives
$$
\|\grad \elljk\|_{\qstar}\leq \gamma  2^j \mu(\Bjkprime)^{1/\qstar},
$$
as desired. The rest of the proof follows in the same way as that of
Propositions \ref{AHS} and \ref{AHSN}.
\end{proof}

\textbf{\large{Conclusion:}} Let $M$ be a complete Riemannian manifold satisfying $(D)$. Then
\begin{itemize}
\item[1.] for all $\frac{s}{s+1}<q<1$,
$$\Mone\subset LS_{\qstar,{\rm ato}}^1.$$
\item[2.] If moreover we assume  $(P_1)$, then
$$
\Mone=\LStato
$$
for all $t>1$.
\end{itemize}

\section{Comparison between $\dotMone$ and Hardy-Sobolev spaces defined in
terms of derivatives} \label{sec:comp}

\subsection{Using a maximal function definition}

In the Euclidean case, the homogeneous Hardy-Sobolev space
$\dot{HS}^1$ consists of all locally integrable functions $f$ such
that $\grad f \in \Hone(\Rn)$ (i.e.\ the weak partial derivatives
$D_jf=\frac{\partial f}{\partial x_j}$ belong to the real Hardy
space $\Hone(\Rn)$). In \cite{miyachi}, it was proved that this
space is nothing else than $\left\lbrace f\in \Loneloc(\Rn) : \, Nf
\in \Lone\right\rbrace$, which also coincides with the Sobolev space
$\dotMone $ (\cite{KS}).

Does this theory extends to the case of Riemannian manifolds? If
this is the case, which hypotheses should one assume on the geometry
of the manifold? We proved an atomic  characterization of $\dotMone$
but we would like
to clarify the relation with
Hardy-Sobolev spaces defined using maximal functions.

\begin{dfn}
\label{dfn:HS1max} We define the (maximal) homogeneous Hardy-Sobolev space $\HSonemax$ as follows:
$$ \HSonemax:= \left\lbrace f\in \Loneloc(M):\, (\grad f)^+ \in \Lone\right\rbrace
$$
where $\grad f$ is the distributional gradient, as defined in {\rm(\ref{graddist})},
and the corresponding maximal function is defined, analogously to {\rm(\ref{def:fplus})}, by
$$(\grad f)^+(x):=\sup \left| \int f\;(\langle \grad \varphi, \bPhi \rangle +
\varphi \Div \bPhi) \, d\mu \right|,
$$
where the supremum is taken over all pairs $\varphi \in \Tone(x)$, $\bPhi \in C^1_0(M, TM)$ such that
 $$
\|\bPhi\|_{\infty} \leq 1 \quad \mbox{and }  \|\Div \bPhi\|_{\infty}\leq \frac{1}{r}
 $$
for the radius $r$ of the same ball $B$ containing $x$ for which $\varphi$ satisfies (\ref{Tone}).
We equip this space with the semi-norm
$$ \|f\|_{\HSonemax}= \|(\grad f)^+\|_1.$$
\end{dfn}
Note that in case both $\varphi$ and $\bPhi$ are smooth, the quantity $\langle \grad \varphi, \bPhi \rangle +
\varphi \Div \bPhi$ represents the divergence of the product $\varphi\bPhi$, so the definition coincides with
that of the maximal function $M^{(1)}f$ given in \cite{ART} for the case of domains in $\Rn$, but here
we want to allow for the case of Lipschitz $\varphi$.

\begin{prop}Let $f\in \HSonemax$. Then $\grad f$, initially defined by
(\ref{graddist}), is given by an $\Lone$ function and satisfies
$$
|\grad f| \leq C(\grad f)^+ \qquad   \mu-{\rm a.e.}
$$
Consequently,
$$ \HSonemax \subset \dotWone
$$
with
$$
\|f\|_{\dotWone} \leq C \|f\|_{\HSonemax}.
$$
\end{prop}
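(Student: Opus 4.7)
My plan is to follow the BV-based final step of the proof of Proposition~\ref{nabN}, but without passing through a mollified function $f_r$: the definition of $(\grad f)^+$ already pairs a bump $\varphi \in \Tone(x)$ with a test vector field $\bPhi$, so a partition-of-unity decomposition of the distributional pairing $\int f\,\Div\bPhi\,d\mu$ will be controlled directly by $(\grad f)^+$. The end goal is to show that the distributional gradient $Df$ is a finite vector-valued Radon measure absolutely continuous with respect to $\mu$, with total-variation density pointwise dominated by $(\grad f)^+$; the polar decomposition of \cite{MPPP} (already invoked in Proposition~\ref{nabN}) then yields the pointwise bound $|\grad f| \leq C (\grad f)^+$ $\mu$-a.e.\ and the inclusion $\HSonemax\subset\dotWone$.

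Concretely, fix $\bPhi\in C^1_0(M,TM)$ with $\|\bPhi\|_\infty\leq 1$ and choose $r\leq \|\Div\bPhi\|_\infty^{-1}$, so that $\bPhi$ is admissible at scale $r$ in the definition of $(\grad f)^+$. Cover $M$ by balls $B_i=B(x_i,r/12)$ and take a partition of unity $\{\varphi_i\}$ supported in $\{6B_i\}$ exactly as in the proof of Proposition~\ref{nabN}, with $\|\grad\varphi_i\|_\infty\leq C/r$ and $\sum_i\varphi_i\equiv 1$. Using $\sum_i \Div(\varphi_i\bPhi)=\Div\bPhi$ as a locally finite sum,
$$-\int f\,\Div\bPhi\,d\mu = -\sum_i \int f\bigl(\langle\grad\varphi_i,\bPhi\rangle+\varphi_i\Div\bPhi\bigr)d\mu.$$
For every $y\in 6B_i$ one has $6B_i\subset B(y,r)$, and the doubling property ensures that $\widetilde\varphi_i:=\varphi_i/(C'\mu(6B_i))$ lies in $\Tone(y)$ relative to the ball $B(y,r)$ once $C'$ is large enough to absorb the doubling constant and the Lipschitz constant of $\varphi_i$. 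Applying the definition of $(\grad f)^+(y)$ to the admissible pair $(\widetilde\varphi_i,\bPhi)$ and then averaging $y$ over $6B_i$ yields
$$\left|\int f\bigl(\langle\grad\varphi_i,\bPhi\rangle+\varphi_i\Div\bPhi\bigr)d\mu\right|\leq C'\int_{6B_i}(\grad f)^+\,d\mu.$$
When $\bPhi$ is supported in an open set $U$ and one additionally requires $r<\dist(\supp\bPhi,M\setminus U)$, each $6B_i$ contributing to the sum is contained in $U$; summing over $i$ and using the bounded overlap of $\{6B_i\}$ gives the localized estimate $\left|\int f\,\Div\bPhi\,d\mu\right|\leq CK\int_U(\grad f)^+\,d\mu$.

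Taking the supremum over admissible $\bPhi$ supported in $U$, we obtain $|Df|(U)\leq CK\int_U(\grad f)^+\,d\mu$ for every open $U$, hence by outer regularity for every Borel set. Since $(\grad f)^+\in\Lone$, this shows $|Df|$ is a finite measure absolutely continuous with respect to $\mu$. The Radon-Nikodym theorem then produces an $\Lone$ density $g$ with $g\leq CK(\grad f)^+$ $\mu$-a.e., and writing $\grad f=gX_f$ with $|X_f|=1$ $\mu$-a.e.\ via the polar decomposition of \cite{MPPP} gives $|\grad f|=g\leq CK(\grad f)^+$ and $\HSonemax\subset\dotWone$ with the norm inequality. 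The main obstacle is coordinating the three scales that appear---the partition scale $r/12$, the admissibility scale $r$ forced by $\|\Div\bPhi\|_\infty$, and the ball $B(y,r)$ used to certify membership of $\widetilde\varphi_i$ in $\Tone(y)$---so that all the $\Tone(y)$ conditions hold uniformly in $y\in 6B_i$; a secondary subtlety is arranging $r<\dist(\supp\bPhi,M\setminus U)$ to upgrade finite total variation to absolute continuity rather than only the global bound $|Df|(M)\leq CK\|(\grad f)^+\|_1$.
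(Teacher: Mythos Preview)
Your proposal is correct and follows essentially the same approach as the paper's own proof: both decompose the distributional pairing $\int f\,\Div\bPhi\,d\mu$ via a partition of unity at a scale dictated by $\|\Div\bPhi\|_\infty$, invoke the definition of $(\grad f)^+$ at each point of a piece's support, average, sum using bounded overlap, and then localize to an arbitrary open set $U$ (by shrinking $r$ below $\dist(\supp\bPhi,M\setminus U)$) to get absolute continuity and conclude via the BV/polar-decomposition argument of Proposition~\ref{nabN}. The only differences are cosmetic: the paper takes $B_i$ of radius $r$ and averages over $B_i$, while you take radius $r/12$ and average over $6B_i$; your normalization $\widetilde\varphi_i=\varphi_i/(C'\mu(6B_i))$ is in fact a bit more careful with the doubling constant than the paper's $\varphi_i/\mu(B_i)$.
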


\begin{proof}  We follow the ideas in the proof of Proposition~\ref{nabN}.  Let $\Omega$
be any open subset of $M$ and consider the total variation of $u$ on $\Omega$, defined by
$$|Df|(\Omega):= \sup \left| \langle \grad f, \bPhi \rangle \right|,
$$
where the supremum is taken over all vector fields $\bPhi \in C^1_0(\Omega,TM)$
with $\|\bPhi\|_\infty \leq 1$.  For such a vector field $\bPhi$, take $r>0$ sufficiently small so
that $\|\Div \bPhi\|_\infty \leq r^{-1}$ and $\dist(\supp(\bPhi),M\setminus\Omega) > 12r$.  As in the proof
of Proposition~\ref{nabN}, take a
collection of balls $B_i = B(x_i, r)$ with $6B_i$ having bounded overlap (with a constant $K$ independent of $r$), covering $M$, and
a Lipschitz partition of unity $\{\varphi_i\}_i$ subordinate to $\{6B_i\}_i$, with $0 \leq \varphi_i \leq 1$
and $|\grad \varphi_i| \leq r^{-1}$.  Then for all $x \in B_i$, $\varphi_i/\mu(B_i) \in \Tone(x)$,  so
$$\left| \int f [\langle \grad \varphi_i, \bPhi\rangle + \varphi_i \Div\bPhi] d\mu \right| \leq (\grad f)^+(x)\mu(B_i).$$
Hence
$$\left| \int f [\langle \grad \varphi_i, \bPhi\rangle + \varphi_i \Div\bPhi] d\mu \right| \leq \int_{B_i}
(\grad f)^+(x)d\mu.$$
Summing up over $i$ such that $6B_i \subset \Omega$, by the choice of $r$ we still get $\sum \varphi_i = 1$ on the support of $\bPhi$,
hence $\sum \grad \varphi_i = 0$, so using the bounded overlap of the balls we have
$$\left| \int f\; \Div \bPhi \, d\mu \right| \leq \sum_{\{i: 6B_i \subset \Omega\}}\int_{B_i}(\grad f)^+ d\mu
\leq K \int_\Omega(\grad f)^+ d\mu \leq K \|(\grad f)^+\|_1 < \infty.$$
The rest of the proof proceeds as in the proof of Proposition~\ref{nabN}, replacing
$Nf$ by $(\grad f)^+$.
\end{proof}

\begin{prop} Let $f\in \Loneloc$. Then at every point of $M$,
$$
(\grad f)^+ \leq  Nf.
$$
Consequently,
$$ \dotMone  \subset \HSonemax
$$
with
$$
\|f\|_{\HSonemax} \leq C \|f\|_{\dotMone}.
$$
\end{prop}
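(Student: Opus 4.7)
The plan is to establish the pointwise bound $(\grad f)^+(x) \leq 2\, Nf(x)$ at every $x \in M$; the claimed inclusion and norm inequality then follow at once from Theorem~\ref{MN1}, since
\[
\|f\|_{\HSonemax} = \|(\grad f)^+\|_1 \leq 2\,\|Nf\|_1 \sim \|f\|_{\dotMone}.
\]

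The key observation is a cancellation identity: for any admissible test pair $(\varphi, \bPhi)$ in the definition of $(\grad f)^+(x)$, the combination $\langle \grad\varphi,\bPhi\rangle + \varphi\,\Div\bPhi$ plays the role of the divergence of the compactly supported field $\varphi\bPhi$, and hence integrates to zero:
\[
\int_M \bigl(\langle \grad\varphi,\bPhi\rangle + \varphi\,\Div\bPhi\bigr)\, d\mu = 0.
\]
This is precisely the content of the distributional definition (\ref{graddist}) applied to $\varphi \in \Lipz(M)$, whose distributional gradient coincides with its almost-everywhere pointwise gradient, so that the duality relation forces $\int \langle \grad\varphi,\bPhi\rangle\, d\mu = -\int \varphi\,\Div\bPhi\, d\mu$. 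Consequently, with $B = B(x,r)$ the ball associated to $\varphi$, we may replace $f$ by $f - f_B$ inside the integral without altering its value.

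The remainder is a direct estimate. Because $\supp\varphi \subset B$, the integrand is supported in $B$, and inserting the size bounds $\|\varphi\|_\infty \leq 1/\mu(B)$, $\|\grad\varphi\|_\infty \leq 1/(r\mu(B))$, $\|\bPhi\|_\infty \leq 1$, $\|\Div\bPhi\|_\infty \leq 1/r$ gives
\begin{align*}
\left|\int_M f\bigl(\langle \grad\varphi,\bPhi\rangle + \varphi\,\Div\bPhi\bigr) d\mu\right|
&= \left|\int_B (f-f_B)\bigl(\langle \grad\varphi,\bPhi\rangle + \varphi\,\Div\bPhi\bigr) d\mu\right|\\
&\leq \frac{2}{r\,\mu(B)}\int_B |f-f_B|\, d\mu \;\leq\; 2\,Nf(x),
\end{align*}
the final inequality using $x \in B$ and the definition of $Nf$. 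Taking the supremum over admissible $(\varphi,\bPhi)$ yields the desired pointwise bound. The only step requiring any care is the cancellation identity for Lipschitz $\varphi$, but this is immediate from (\ref{graddist}); notably, no additional hypothesis on $M$ (in particular no Poincar\'e inequality) is needed.
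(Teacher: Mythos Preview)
Your proof is correct and follows essentially the same approach as the paper: both use the cancellation identity $\int(\langle\grad\varphi,\bPhi\rangle + \varphi\,\Div\bPhi)\,d\mu = 0$ to replace $f$ by $f - f_B$, then apply the size bounds on $\varphi$, $\grad\varphi$, $\bPhi$, $\Div\bPhi$ and the definition of $Nf$. Your constant $2$ in the pointwise bound is in fact the honest one coming from the two terms $\langle\grad\varphi,\bPhi\rangle$ and $\varphi\,\Div\bPhi$; the paper's write-up absorbs this into the estimate without comment.
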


\begin{proof}
Let $f\in \Loneloc$ and $x\in M$. Take $\varphi \in \Tone(x)$, $\bPhi \in C^1_0(M, TM)$ as in Definition~\ref{dfn:HS1max}. Then
$$\int (\langle \grad \varphi, \bPhi \rangle + \varphi \Div \bPhi)d\mu = 0$$
so we can write
\begin{align*}
\left| \int f (\langle \grad \varphi, \bPhi \rangle + \varphi \Div \bPhi) d\mu\right|&=\left| \int (f-f_B) (\langle \grad \varphi, \bPhi \rangle + \varphi \Div \bPhi)d\mu \right|
\\
&\leq \frac{1}{r\mu(B)}\int|f-f_B|d\mu
\\
& \leq Nf(x).
\end{align*}
\end{proof}

We would like to prove the reverse inclusion.  However, this would require some tools such
as Lemma 6 in \cite{KS} or Lemma 10 in \cite{ART} (solving $\Div \Psi = \phi$ with $\Psi$ having compact support) which are particular to $\Rn$.

Another possible maximal function we can use, following the ideas in \cite{kintuo} (see Section 4.1),
is given by
\begin{dfn}
\label{dfn:gradmax}
$$\cM^*(\grad f)(x):=\sup_j |\grad f_{r_j}|
$$
with the ``discrete convolution'' $f_{r_j}$ defined as in (\ref{dfn:ur}), corresponding
to an enumeration of the positive rationals $\{r_j\}_j$, where for each $j$ we have
a covering of $M$ by balls $\{B^j_i\}_i$  of radius $r_j$, and a partition of unity
$\varphi^j_i$ subordinate to this covering.
\end{dfn}

We have already shown in the proof of Proposition~\ref{nabN} (see (\ref{urmaxN})) that
\begin{lem} Let $f\in \Loneloc$. Then at $\mu$-almost every point of $M$,
$$
\cM^*(\grad f) \leq  Nf.
$$
\end{lem}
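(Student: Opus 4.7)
The plan is to appeal directly to the pointwise estimate~(\ref{urmaxN}) that was established during the proof of Proposition~\ref{nabN}. Specifically, for each fixed $r>0$, the discrete convolution $f_r$ defined in (\ref{dfn:ur}), built from a covering of $M$ by balls of radius $r$ with bounded overlap and an associated Lipschitz partition of unity, was shown to satisfy
$$|\grad f_r(x)| \leq CK\,\frac{1}{r}\aver{B(x,9r)}|f-f_{B(x,9r)}|\,d\mu \leq CK\, Nf(x)$$
for $\mu$-almost every $x\in M$, where the constants $C$ and $K$ (the latter being the bounded overlap constant of the covering) can be chosen independently of $r$. This is exactly the content of the chain of inequalities in~(\ref{urmaxN}).

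Now fix the enumeration $\{r_j\}_j$ of the positive rationals appearing in Definition~\ref{dfn:gradmax}. Applied to each $r_j$, the inequality above yields a $\mu$-null set $E_j\subset M$ such that
$$|\grad f_{r_j}(x)| \leq CK\, Nf(x) \qquad \text{for all } x\in M\setminus E_j.$$
Since the family of radii is countable, the union $E:=\bigcup_j E_j$ is still $\mu$-null. For every $x\in M\setminus E$, taking the supremum over $j$ gives
$$\cM^{*}(\grad f)(x) = \sup_j |\grad f_{r_j}(x)| \leq CK\, Nf(x),$$
which is the desired pointwise bound at $\mu$-almost every point (absorbing the constant $CK$ as per the statement of the lemma).

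There is essentially no obstacle: the work was already done inside the proof of Proposition~\ref{nabN}, and the only additional observation needed is that the constant $K$ in~(\ref{urmaxN}) was explicitly noted to be independent of $r$, which allows a uniform bound valid simultaneously along the countable sequence of radii used to define $\cM^{*}(\grad f)$. Countability is essential here, since it is what lets us pass from a $\mu$-a.e.\ statement for each individual $r_j$ to a $\mu$-a.e.\ statement for the supremum.
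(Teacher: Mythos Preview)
Your proposal is correct and follows exactly the approach the paper takes: the lemma is stated immediately after noting that it was already established in the proof of Proposition~\ref{nabN} via~(\ref{urmaxN}), and your only addition is to spell out the countability argument needed to pass from each $r_j$ to the supremum, which is implicit in the paper's one-line justification.
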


\subsection{Derivatives of molecular Hardy spaces}
\label{comp2}

As noted in the previous section, on a manifold, obtaining a decomposition with atoms of compact support from a maximal function definition is not obvious.  In \cite{AMR}, the authors considered instead Hardy spaces generated by molecules.
We begin by recalling their definition of $\Honemol(\wedge^1 T^*M)$ (a special case with $N = 1$ of
$H^1_{{\rm mol}, N}(\wedge T^*M)$ in Definition 6.1 of
\cite{AMR}, where we have dropped the superscript $1$ for convenience).
If in addition the heat kernel on $M$ satisfies Gaussian upper bounds, this space coincides
with the space $H^1(\wedge T^*M)$, which also has a maximal function characterization (see \cite{AMR},
Theorem 8.4).

A sequence of non-negative Lipschitz functions
$\{\chi_k\}_k$  is said to be (a partition of unity) adapted to a ball $B$ of
radius $r$  if $\supp \chi_0\subset 4B$, $\supp \chi_k\subset 2^{k+2}B\setminus 2^{k-1}B$ for all $k\geq 1$,
\begin{equation}
\label{gradchik}
\|\grad \chi_k\|_{\infty}\leq C 2^{-k}r^{-1}
\end{equation} and
$$\sum_k \chi_k=1 \; \textrm {on } M.$$
A $1$-form $a \in L^2(\wedge^1 T^*M)$ is called a $1$-molecule if $a=db$ for some $b\in L_2(M)$ and
there exists a ball $B$ with radius $r$, and a partition of unity $\{\chi_k\}_k$ adapted to $B$, such that for all $k\geq 0$
\begin{equation}
\label{chikaL2norm}
\|\chi_k a\|_{L^2(\wedge^1 T^*M)}\leq 2^{-k}(\mu(2^kB))^{-1/2}
\end{equation}  and
$$\|\chi_k b\|_{2}\leq 2^{-k}r(\mu(2^kB))^{-1/2}.$$
Summing in $k$, this implies that
$\|a\|_{L^2(\wedge^1 T^*M)}\leq 2(\mu(B))^{-1/2}$ and $\|b\|_{L^2}\leq 2r(\mu(B))^{-1/2}$. Moreover,
there exists a constant $C'$, depending only on the doubling constant in ($D$), such that
\begin{equation}
\label{bL2norm}
\|\ind_{2^{k+2}B\setminus 2^{k-1}B}b\|_{2}\leq \left\|\sum_{l = k - 3}^{k + 3}\chi_l b\right\|_{2}
\leq C'r2^{-k}(\mu(2^{k+2}B))^{-1/2}.
\end{equation}

\begin{dfn}[\cite{AMR}]\label{HSmol} We say that $f\in \Honemol(\wedge^1 T^*M)$
if there is a sequence $\{\lambda_j\}_j\in \ell^1$ and a sequence of
$1$-molecules $\{a_j\}_j$ such that
$$
f=\sum_j\lambda_ja_j
$$
in $\Lone(\wedge^1T^*M)$,  with  the norm defined by
$$
\|f\|_{\Honemol(\wedge^1 T^*M)}=\inf \sum_j|\lambda_j|.$$
Here the infimum is taken over all such decompositions.
The space $\Honemol(\wedge^1 T^*M)$ is a Banach space.
\end{dfn}

 \begin{prop}  Let $M$ be a complete Riemannian manifold satisfying $(D)$ and $(P_1)$.
 We then  have
\begin{equation}
\label{Honemol=dHSone}
\Honemol(\wedge^1 T^*M)= d(\dot{HS}^1_{2,{\rm ato}}(M)).
\end{equation}
Moreover
 $$\|g\|_{\Honemol(\wedge^1 T^*M)}\sim \inf_{df = g} \|f\|_{\dot{HS}^1_{2,{\rm ato}}(M)}.$$
 Consequently, in this case we have an atomic decomposition for $\Honemol(\wedge^1 T^*M)$ (this was already proved in
\cite{AMR}, after Theorem 8.4).
\end{prop}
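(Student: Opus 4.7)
The plan is to prove both inclusions, matching norms up to universal constants.

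\textbf{Inclusion $d(\dot{HS}^1_{2,{\rm ato}}(M))\subset \Honemol(\wedge^1T^*M)$.} I will verify that the differential of every $(1,2)$ Hardy-Sobolev atom is, up to a uniform constant, a $1$-molecule. Let $a$ be such an atom, supported in a ball $B$ of radius $r$, with $\int a\,d\mu=0$ and $\|\grad a\|_2\leq \mu(B)^{-1/2}$. Take $b=a$ and use any Lipschitz partition of unity $\{\chi_k\}$ adapted to $B$ in which $\chi_0\equiv 1$ on $B$ (forcing $\chi_k a=\chi_k\,da=0$ for $k\geq 1$); then only the $k=0$ condition is nontrivial. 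The bound $\|\chi_0\,da\|_{L^2(\wedge^1T^*M)}\leq\mu(B)^{-1/2}$ is immediate from property 2 of the atom, and the bound $\|\chi_0 a\|_2\leq Cr\,\mu(B)^{-1/2}$ follows from Sobolev-Poincar\'e applied to $a$: the vanishing moment gives $a_B=0$, and $(D)+(P_1)$ yields a $(2,q)$-Poincar\'e-Sobolev estimate (via H\"older on the gradient side of Theorem 8.7 of Haj{\l}asz, quoted earlier in the paper, for some $q<1$ close to $1$) which produces the required $L^2$ bound. Hence $da$ is a uniform multiple of a molecule. Given $f=\sum_j\lambda_j a_j$ converging in $\dotWone$, we conclude $df=\sum_j C\lambda_j\tilde a_j$ in $L^1(\wedge^1T^*M)$ with molecules $\tilde a_j$, so $\|df\|_{\Honemol}\leq C\|f\|_{\dot{HS}^1_{2,{\rm ato}}}$ after taking the infimum.

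\textbf{Reverse inclusion $\Honemol(\wedge^1T^*M)\subset d(\dot{HS}^1_{2,{\rm ato}}(M))$.} It suffices to show that every $1$-molecule $g=db$ adapted to a ball $B$ with partition $\{\chi_k\}$ can be written as $g=d\tilde f$ for some $\tilde f\in\dot{HS}^1_{2,{\rm ato}}(M)$ with $\|\tilde f\|_{\dot{HS}^1_{2,{\rm ato}}}\leq C$, and then to sum over the molecular decomposition of $g$ with its $\ell^1$ coefficients. Following the standard molecule-to-atom recipe, set
\[
\tilde b_k=(b-c_k)\chi_k,\qquad c_k=\frac{\int b\chi_k\,d\mu}{\int \chi_k\,d\mu}.
\]
Each $\tilde b_k$ is then supported in $\supp\chi_k\subset 2^{k+2}B$, has vanishing integral, and combining the molecular decay estimates (\ref{chikaL2norm}) and (\ref{bL2norm}) with the Lipschitz bound (\ref{gradchik}) and doubling yields $\|\grad \tilde b_k\|_2\leq C\,2^{-k}\,\mu(2^{k+2}B)^{-1/2}$. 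Thus $\tilde b_k$ is a constant of order $2^{-k}$ times a $(1,2)$ Hardy-Sobolev atom, and these constants are $\ell^1$-summable in $k$.

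\textbf{Main obstacle.} The naive sum gives $\sum_k\tilde b_k=b-\sum_k c_k\chi_k$, so $d(\sum_k\tilde b_k)=g-d(\sum_k c_k\chi_k)$, and the correction $d(\sum_k c_k\chi_k)$ must be absorbed into the atomic decomposition — this is the step I expect to be most delicate. The key point is that on the annular overlap region of $\chi_k$ and $\chi_{k+1}$ one has $\grad(\sum_l c_l\chi_l)=(c_k-c_{k+1})\grad\chi_k$, and the differences $|c_k-c_{k+1}|$ can be estimated from the oscillation of $b$ across adjacent shells, inheriting the $2^{-k}$ decay encoded in (\ref{bL2norm}); this lets us write the correction itself as a telescoping $\ell^1$-sum of differentials of compactly supported mean-zero Lipschitz bumps that qualify (after rescaling) as $(1,2)$-Hardy-Sobolev atoms. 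Alternatively, since under $(D)+(P_1)$ the atomic decomposition for $\Honemol(\wedge^1T^*M)$ is already available (as referenced, \cite{AMR} after Theorem 8.4), one can simply identify the $\Honemol$-atoms provided there with differentials of $(1,2)$ Hardy-Sobolev atoms. Either way, summing over the molecular decomposition of $g$ produces $\tilde f=\sum_j\lambda_j\tilde f_j$ in $\dotWone$ with $d\tilde f=g$ and $\|\tilde f\|_{\dot{HS}^1_{2,{\rm ato}}}\lesssim \|g\|_{\Honemol}$, completing the equivalence of norms.
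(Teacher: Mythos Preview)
Your forward inclusion matches the paper's. For the reverse inclusion, the paper avoids your ``main obstacle'' entirely by exploiting the alternative atom condition $3^{\prime\prime}$ (the size bound $\|a\|_t \leq r(B)\mu(B)^{-1/t'}$) in place of the vanishing-moment condition $3$. Under $(D)+(P_1)$ these define the same space $\dot{HS}^1_{2,{\rm ato}}$, by Remarks~\ref{atom-rems}, \ref{remcomp1}, \ref{rem-otheratoms} together with the identification $\dot{HS}^1_{2,{\rm ato}}=\dotMone$. The paper then sets $\beta_j^k := 2^{k-1}\gamma\, b_j\chi_j^k$ with \emph{no} constant subtracted, and verifies directly from the molecular bounds (\ref{chikaL2norm}), (\ref{gradchik}), (\ref{bL2norm}) that each $\beta_j^k$ is a $(1,2)$-atom of type $3^{\prime\prime}$ supported in $2^{k+2}B_j$. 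Since $\sum_k b_j\chi_j^k = b_j$ exactly, there is no correction term at all, and $\ell^1$-summability in $k$ comes for free from the geometric factor $2^{-k}$.

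Your route --- forcing the moment condition and then absorbing the residual $d(\sum_k c_k\chi_k)$ via telescoping --- can presumably be completed, but it is more laborious and you have only sketched it. One technical slip: the supports $2^{k+2}B\setminus 2^{k-1}B$ allow more than two consecutive $\chi_k$ to overlap, so your formula $\grad(\sum_l c_l\chi_l)=(c_k-c_{k+1})\grad\chi_k$ on the overlap region is not literally correct; the idea survives with bounded overlap, but the bookkeeping changes. Your fallback of invoking the atomic decomposition from \cite{AMR} is circular here, since that decomposition is precisely the corollary being derived.
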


\begin{rem}
As pointed out in Remarks~\ref{remcomp1} and ~\ref{rem-otheratoms}, we can define
the atomic Hardy-Sobolev space $\dot{HS}^1_{2,{\rm ato}}(M)$ by using $(1,2)$-atoms satisfying
condition $3^{\prime\prime}$ of Remarks~\ref{atom-rems} instead of
condition $3$ of Definition~\ref{HHSA}.  As will be seen from the proof below, if we
restrict ourselves to this kind of atoms we do not require the hypothesis
$(P_1)$ for (\ref{Honemol=dHSone}).  Under the assumption $(P_1)$, we actually get the stronger conclusion
$$\Honemol(\wedge^1 T^*M)=d(\dot{HS}_{2,{\rm ato}}^1)=d(\dotHStato)=d(\dotMone)$$
for all $t > 1$.
\end{rem}

 \begin{proof}
Take $f\in \dot{HS}^1_{2,{\rm ato}} $. There exists a sequence
$\{\lambda_j\}_j\in \ell^1$ and $(1,2)-$atoms $b_j$ such that
$f=\sum_j\lambda_j b_j$ in $\dot{W}_1^1$. This means
$\sum_{j}\lambda_{j}\grad b_{j}$ converges in  $\Lone$ to $\grad f$,
and by the isometry between the vector fields and the $1$-forms, we
have $df=\sum_j \lambda_j db_{j}$ in $\Lone(\wedge^1T^*M)$.

We claim that $a_j = db_{j}$ are $1$-molecules.  Indeed, fix $j$, take $B_{j}$ to be the ball containing the
support of $b_j$ and let $\{\chi_{j}^k\}_k$ be a partition of unity adapted to $B_{j}$.
Then
 $$
\|\chi_{j}^0a_{j}\|_2\leq \|db_{j}\|_2 = \|\grad b_{j}\|_2 \leq \frac{1}{\mu(B_{j})^{\frac{1}{2}}}$$
 and by condition $3^{\prime\prime}$ of Remarks~\ref{atom-rems} (alternatively
condition $3$ of Definition~\ref{HHSA} and $(P_1)$) we get
 $$
 \|\chi_{j}^0b_{j}\|_2\leq \|b_{j}\|_2\leq r_{j}\frac{1}{\mu(B_{j})^{\frac{1}{2}}}.$$
  For $k\geq 1$, there is nothing to do since $\supp b_{j}\subset B_{j}$ and
$\supp\chi_{j}^k\subset 2^{k+2}B_{j}\setminus 2^{k-1}B_{j}\subset (B_{j})^c$.
Consequently,  $df\in \Honemol(\wedge^1 T^*M)$ with $\|df\|_{\Honemol(\wedge^1 T^*M)}\leq \sum_j|\lambda_j|$.
Taking the infimum over all such decompositions, we get $\|df\|_{\Honemol(\wedge^1 T^*M)}\leq\|f\|_{\dot{HS}^1_{2,{\rm ato}} }$.

Now for the converse, let  $g\in \Honemol(\wedge^1 T^*M)$. Write
$$g=\sum_j\lambda_j a_j:= \sum_j \lambda_j db_j$$
where $\sum_j |\lambda_j| < \infty$, for every $j$, $a_j$ is a
$1$-molecule associated to a  ball $B_j$, and the convergence is in
$\Lone$.  Let $\{\chi_j^k\}_k$ be the partition of unity adapted to
$B_j$. Then
$$g=\sum_j\lambda_j \sum_kdb_j\chi_j^k= \sum_j\lambda_j d( \sum_kb_j\chi_j^k)=\sum_j\lambda_j \sum_k d(b_j\chi_j^k)$$
since the sum is locally finite and $\sum_k \chi_j^k=1$.

We claim that for every $j,\,k$,  $\beta_j^k:=2^{k-1}\gamma b_j\chi_j^k$, with $\gamma$ a constant to be determined,  satisfies properties $1,2$ and $3^{\prime\prime}$ (see  Definition~\ref{HHSA} and  Remarks~\ref{atom-rems})
of a $(1,2)$-homogeneous Hardy-Sobolev atom.
Indeed, $\beta_j^k$ is supported in the ball $2^{k+2}B_j$ with
$$\|\beta_j^k\|_2\leq 2^{k-1}\gamma\frac{2^{-k}r_j}{\mu(2^kB_j)^{\frac{1}{2}}} \leq \frac{2^{k+2}r_j}{\mu(2^{k+2}B_j)^{\frac{1}{2}}}$$
for an appropriate choice of $\gamma$ depending only on the doubling constant in ($D$).
Furthermore, by (\ref{chikaL2norm}), (\ref{gradchik}), and (\ref{bL2norm}),
\begin{align*}
\|\grad \beta_j^k\|_2 & = 2^{k-1} \gamma \|d(b_j\chi_j^k)\|_2\\
& \leq 2^{k-1} \gamma( \|a_j\chi_j^k\|_2+ \|b_j d\chi_j^k\|_2) \\
& \leq 2^{k-1} \gamma(2^{-k}(\mu(2^kB_j))^{-1/2} +
C2^{-k}r_i^{-1} \|\ind_{2^{k+2}B_j\setminus 2^{k-1}B_j}b_j \|_2)\\
&\leq  \mu(2^{k+2}B_j)^{-1/2}.
\end{align*}
Here we again chose $\gamma$ conveniently, depending only on the doubling constant, and
used the fact that $k \geq 0$.

Since $\sum_{j,k}|\lambda_j| \gamma^{-1} 2^{1-k}\leq
4\gamma^{-1}\sum_{j}|\lambda_j| <\infty$, the sum
$f:=\sum_j\lambda_j \sum_k \gamma^{-1} 2^{1-k} \beta_j^k$ defines an
element of $\dot{HS}^1_{2,{\rm ato}}$, with the convergence being in
$\dot{W}_1^1$.  This means that in $\Lone$ we have
$$df=d\left( \sum_{j,k}\lambda_j (b_j\chi_j^k) \right) = \sum_j\lambda_j \sum_k d(b_j\chi_j^k) = g.$$
Therefore $g=df=d\left( \sum_{j,k}\lambda_j (b_j\chi_j^k) \right)$,
with
$\|f\|_{\dot{HS}^1_{2,{\rm ato}}} \leq 4\gamma^{-1}\sum_{j}|\lambda_j|$.
Taking the infimum over all such decompositions of $g$, we see that
$$\inf_{df = g} \|f\|_{\dot{HS}^1_{2,{\rm ato}}}\leq 4\gamma^{-1}\|g\|_{\Honemol(\wedge^1 T^*M)}.$$
\end{proof}

\begin{cor} In the Euclidean case, we then obtain
$$\Honemol(\Rn, \wedge^1)={\mathcal H}^1_d(\Rn, \wedge^1)=d(\dotMone)=d(\dotHStato)$$
for all $t>1$.
(For details on ${\mathcal H}^1_d(\Rn, \wedge^1)$, see \cite{LM}).
\end{cor}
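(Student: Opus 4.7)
The plan is to observe that the Euclidean space $\R^n$, viewed as a complete Riemannian manifold with the standard metric and Lebesgue measure, trivially satisfies both the doubling property $(D)$ (with $s = n$) and the Poincar\'e inequality $(P_1)$. Hence all the hypotheses of the preceding proposition, and its strengthened version in the remark, are in force. First I would invoke this strengthened statement with $M = \R^n$ to obtain
$$\Honemol(\R^n, \wedge^1) = d(\dot{HS}^1_{2,{\rm ato}}) = d(\dotHStato) = d(\dotMone)$$
for every $t > 1$, which takes care of three of the four identifications in the corollary.

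The remaining task is to identify $\Honemol(\R^n, \wedge^1)$ with the space ${\mathcal H}^1_d(\R^n, \wedge^1)$ of Lou and McIntosh \cite{LM}. The natural approach is to match the molecular definitions: in \cite{LM}, ${\mathcal H}^1_d(\R^n, \wedge^1)$ is characterized as the space of exact $1$-forms $df$ admitting an atomic/molecular decomposition of essentially the same type as in Definition~\ref{HSmol}. Thus the equality $\Honemol(\R^n, \wedge^1) = {\mathcal H}^1_d(\R^n, \wedge^1)$ is a matter of comparing the two molecular families; in the Euclidean setting, Gaussian bounds on the heat kernel are automatic and the equivalence of these molecular formulations is precisely what is established in \cite{LM} (and cross-referenced in \cite{AMR}).

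I do not anticipate any genuine obstacle: the main proposition has already been proved in full generality, and once $(D)$ and $(P_1)$ are recognized for $\R^n$ the corollary is just a matter of citing \cite{LM} for the final identification. The only minor subtlety is to make sure the conventions for the molecular space $\Honemol$ used here match those of \cite{LM}; this involves checking the size, localization, and cancellation conditions on molecules, but these are routine bookkeeping and not substantive.
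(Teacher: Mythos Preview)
Your approach is correct and matches the paper's intent: the corollary is stated without proof, as it follows immediately from the preceding proposition and remark once one notes that $\R^n$ satisfies $(D)$ and $(P_1)$, together with the identification of ${\mathcal H}^1_d(\R^n,\wedge^1)$ from \cite{LM} (see also \cite{AMR}). There is nothing further to add.
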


\end{document}